\newcommand{\cA}{\mathcal{A}}
\newcommand{\cM}{\mathcal{M}}
\newcommand{\bF}{\mathbb{F}}
\newcommand{\bQ}{\mathbb{Q}}\newcommand{\bR}{\mathbb{R}}
\newcommand{\bZ}{\mathbb{Z}}
\newtheorem{theorem}{Theorem}[section]
\newtheorem{lemma}[theorem]{Lemma}
\newtheorem{proposition}[theorem]{Proposition}
\newtheorem{corollary}[theorem]{Corollary}
\newtheorem{atheorem}{Theorem}
\theoremstyle{definition}
\newtheorem{definition}[theorem]{Definition}
\newtheorem{example}[theorem]{Example}
\newtheorem{remark}[theorem]{Remark}
\newtheorem{convention}[theorem]{Convention}
\newcommand{\mr}[1]{{\rm #1}}
\newcommand{\Diff}{\mr{DIFF}}
\newcommand{\Pl}{\mr{PL}}
\newcommand{\Top}{\mr{TOP}}
\newcommand{\IE}{\hyperlink{item.ie}{\textbf{IE}}}
\newcommand{\MT}{\hyperlink{item.mt}{\textbf{MT}}}
\newcommand{\WT}{\hyperlink{item.wt}{\textbf{WT}}}
\newcommand{\IT}{\hyperlink{item.it}{\textbf{IT}}}
\newcommand{\SH}{\hyperlink{item.sh}{\textbf{SH}}}
\newcommand{\KT}{\hyperlink{item.kt}{\textbf{KT}}}
\title{Proving homological stability for homeomorphisms of manifolds}
\author{Alexander Kupers}
\thanks{Alexander Kupers is supported by a William R. Hewlett Stanford Graduate Fellowship, Department of Mathematics, Stanford University, and was partially supported by NSF grant DMS-1105058.}
\date{\today}
\begin{document}

\begin{abstract}Following the argument for diffeomorphisms by Galatius and Randal-Williams, we prove that homeomorphisms of 1-connected manifolds of even dimension at least 6 exhibit homological stability. We deduce similar results for PL homeomorphisms and homeomorphisms as a discrete group.\end{abstract}

\maketitle

\tableofcontents

\section{Introduction} A major advance in the theory of manifolds is recent work by Galatius and Randal-Williams on diffeomorphism groups of higher-dimensional smooth manifolds \cite{grwcob,grwstab1}. They prove these groups exhibit homological stability, and compute the stable homology. In this note we explain that this homological stability argument also applies to homeomorphism groups of topological manifolds, PL homeomorphism groups of PL manifolds, and discrete groups of homeomorphisms. 

These results are not unexpected, so we feel compelled to justify this paper. Firstly, we hope that Section \ref{sec.input} gives a good overview of the relevant properties of topological manifolds and literature on them. We believe the tools described in that section are those needed to extend other homological stability results for diffeomorphisms to homeomorphisms and PL homeomorphisms. Secondly, these results point to the interesting problem of scanning for topological and PL manifolds discussed later in this introduction. Finally, the results of Galatius and Randal-Williams have already had many applications in the study of smooth manifolds and diffeomorphisms, e.g. \cite{oscarjohannesblock,oscarjohannesborisscalar,weissdalian}, and we hope that this result lays the groundwork for similar applications to topological manifolds and homeomorphisms.

\subsection{The results} Let $\Top_\partial$ be the topological group of homeomorphisms fixing the boundary pointwise (in the compact-open topology) and $M$ a $2n$-dimensional topological manifold with boundary $S^{2n-1}$. In Definition \ref{def.stab} we construct a \emph{stabilization map}
\[t: \Top_\partial(M) \to \Top_\partial(M\# (S^n \times S^n))\] 

Our main result is the homology of the group of homeomorphisms stabilizes as one takes connected sum with more and more copies of $S^n \times S^n$:

\begin{atheorem}\label{thm.main} Let $M$ be a compact 1-connected topological manifold of dimension $2n \geq 6$ with boundary $S^{2n-1}$. Then the map \[t_*: H_*\left(B\Top_\partial(M \#_g (S^n \times S^n)) \right) \to H_*\left(B\Top_\partial(M \#_{g+1} (S^n \times S^n))\right)\]
induced by the stabilization map is an isomorphism in the range $* \leq \frac{g-3}{2}$ and a surjection in the range $* \leq \frac{g-1}{2}$.\end{atheorem}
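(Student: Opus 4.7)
The plan is to follow the proof of homological stability for diffeomorphism groups by Galatius and Randal-Williams, substituting the topological counterparts of each geometric input; these are to be collected in Section \ref{sec.input}. Write $M_g := M \#_g (S^n \times S^n)$ for brevity. The argument splits into two independent parts: a high-connectivity result for a semi-simplicial resolution of $B\Top_\partial(M_g)$, and a formal spectral sequence computation that converts the former into the claimed stability ranges.

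For the resolution, I would construct a semi-simplicial space $K_\bullet(M_g)$ whose space of $p$-simplices consists of ordered $(p+1)$-tuples of pairwise disjoint locally flat embeddings
\[ (S^n\times S^n) \setminus \mathring{D}^{2n} \hookrightarrow M_g \]
attached in a standard way to $p+1$ distinct small disks on $\partial M_g = S^{2n-1}$, with face maps given by forgetting an embedding. Cutting out these pieces identifies the remainder with $M_{g-p-1}$, so by the (parametrized) topological isotopy extension theorem the orbits of $\Top_\partial(M_g)$ on $K_p(M_g)$ have stabilizers homotopy equivalent to $\Top_\partial(M_{g-p-1})$. The core technical assertion — and the main obstacle — is a connectivity estimate of the form: $|K_\bullet(M_g)|$ is roughly $\lfloor (g-3)/2 \rfloor$-connected.

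In the smooth case, Galatius and Randal-Williams prove this by combining transversality, the Whitney trick, and a careful analysis of the middle-dimensional intersection form on $M_g$ (hyperbolic of rank $2g$) to produce disjoint locally representing spheres and then promote them to disjoint copies of $(S^n\times S^n)\setminus \mathring{D}^{2n}$. In the topological setting with $2n \geq 6$ each of these inputs has a known analogue: topological transversality and the topological Whitney trick are available by Kirby--Siebenmann handle theory, algebraic duality between locally flat middle-dimensional spheres can be realized geometrically, and locally flat isotopy extension transports algebraic cancellations to genuine isotopies. The bulk of the work lies in verifying that each lemma in the Galatius--Randal-Williams connectivity argument admits such a topological replacement, and I expect this verification — rather than the surrounding homotopical formalism — to absorb most of the effort.

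Once the connectivity estimate is in hand, stability follows formally. Applying the Borel construction over $\Top_\partial(M_g)$ to the augmented semi-simplicial space $|K_\bullet(M_g)|_+ \to \ast$ and filtering by skeleta produces a spectral sequence
\[ E^1_{p,q} = H_q\!\left( B\Top_\partial(M_{g-p-1}) \right) \Longrightarrow 0 \text{ in total degree } \leq \tfrac{g-3}{2}, \]
whose $d^1$ differential, after the identification of stabilizers above, is an alternating sum in which only the stabilization map $t_*$ contributes after cancellation. A double induction on $g$ and on the homological degree, run exactly as in Galatius--Randal-Williams, then yields the isomorphism range $* \leq (g-3)/2$ and surjection range $* \leq (g-1)/2$ asserted in the theorem.
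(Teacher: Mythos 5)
Your proposal is correct in outline and follows essentially the same route as the paper: a Borel-construction resolution of $B\Top_\partial(M_{\# g})$ by spaces of disjointly embedded, boundary-attached copies of $(S^n \times S^n)\setminus \mr{int}(D^{2n})$, with stabilizers identified via topological isotopy extension, the high connectivity of the complex obtained by redoing the Galatius--Randal-Williams argument with microbundle transversality, the topological Whitney trick, immersion theory and Kister/stable-homeomorphism input, and then the standard spectral-sequence induction. The paper's execution differs only in implementation details (germs of collars, the distinction between disjoint cores and disjoint thick cores, and the chain of auxiliary complexes reducing to the algebraic complex of hyperbolic summands), all of which fall under the verification work you correctly identify as the bulk of the effort.
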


\begin{remark}The case $2n=2$ follows from Harer's theorem \cite{harerstab,wahlmcg}, the existence and uniqueness of smooth structures on topological surfaces \cite{rado,moisebook,hatchertorus} and Smale's theorem \cite{smaledisk}. These results together imply the inclusion $\Diff_\partial(\Sigma_{g,1}) \hookrightarrow \Top_\partial(\Sigma_{g,1})$ is a weak equivalence for any genus $g$ surface $\Sigma_{g,1}$ with one boundary component. We will thus only focus on the high-dimensional case $2n \geq 6$.\end{remark} 

This result admits a strengthening, Theorem \ref{thm.ext} in Section \ref{sec.ext}. This implies the result is also true for PL homeomorphisms, see Corollary \ref{thm.pl}. Using a theorem of McDuff we conclude that the result is also true for homeomorphisms as a discrete group, see Corollary \ref{thm.disc}.


\subsection{The stable homology} Scanning has been used to successfully compute the stable homology of mapping class groups \cite{madsenweiss} and diffeomorphisms of higher dimensional manifolds \cite{gmtw,grwcob}. We conjecture that the analogous results hold for topological manifolds, though we do not necessarily think they can be proven using the same techniques. 

If so, the following is true. Let $W_{g,1}$ denote $D^{2n} \# g(S^n \times S^n)$, then the stable homology of $B\Top_\partial (W_{g,1})$ is given by the homology of a component of the infinite loop space associated to the Thom spectrum $MT(\theta^\Top)$. Here $MT(\theta^\Top)$ is the Thom spectrum of $-(\theta^\Top)^*\gamma$, where $\Top(2n)$ is the topological group of homeomorphisms of $\bR^{2n}$, $\gamma$ is the universal $\bR^{2n}$-bundle over $B\Top(2n)$ and $\theta^\Top: B\Top(2n)\langle n \rangle \to B\Top(2n)$ is the $n$-connective cover.

In \cite{mauriciothesis}, Gomez Lopez used scanning to compute the homotopy type of the classifying space of the PL bordism category in terms of a spectrum of PL manifolds, which is conjecturally weakly equivalent to $MT\Pl(2n)$. We believe his techniques can be used to compute the stable homology of PL homeomorphisms of connected sums of $S^n \times S^n$. This amounts to extending his result to tangential structures and showing that the following property of the smooth bordism category also holds in the PL setting: the classifying space of the bordism category with tangential structure $\theta^\Pl: B\Pl(2n)\langle n \rangle \to B\Pl(2n)$ is weakly equivalent to that of a subcategory of highly-connected manifolds and bordisms. If this program can be completed, it implies the identification conjectured above when $n \geq 4$. To see this, note Theorem \ref{thm.plhomotopy} implies $B\Pl(2n)\langle n \rangle \to B\Top(2n)\langle n \rangle$ is a weak equivalence for $n \geq 3$ and by Remark \ref{rem.comppltop} $B\Pl_\partial (W_{g,1}) \to B\Top_\partial (W_{g,1})$ is a weak equivalence when $n \geq 4$.

What little is known about the (co)homology of $B\Top_\partial(W_{g,1})$ is consistent with these conjectures. In \cite{oscarjohannesblock}, Ebert and Randal-Williams define MMM-classes for topological manifold bundles, indexed by the cohomology of $H^*(BS\Top(2n);\bF)$ with $\bF$ a field, where $S\Top(2n) \subset \Top(2n)$ is the subgroup of orientation-preserving homeomorphisms.  Using this they prove that the map induced on $H^*(-;\bQ)$ by 
\[B\Diff_\partial(W_{g,1}) \to B\Top_\partial(W_{g,1})\]
is surjective for $* \leq \frac{g-4}{2}$ and split injective for $* \leq  \min(\frac{2n-7}{2},\frac{2n-4}{3})$. This is consistent because the stable cohomology of $B\Diff_\partial(W_{g,1})$ is that of a component of the infinite loop space of the Thom spectrum of $-(\theta^O)^*\gamma$ for $\theta^O: BO(2n)\langle n \rangle \to BO(2n)$ the $n$-connective cover, and the map induced by $MT(\theta^O) \to MT(\theta^\Top)$ on $H^*(-;\bQ)$ is injective in a range and always surjective. See \cite{weissdalian} for recent results about the rational cohomology of $B\Top(2n)$.

\subsection{Outline of the paper} In Section \ref{sec.proof} we will prove Theorem \ref{thm.main}. We will also try to follow the notation of \cite{grwstab1}, so that a reader familiar with their argument will find it easier to follow ours. Our proof requires several foundational results on topological manifolds, which are collected in Section \ref{sec.input}. Finally, in Section \ref{sec.ext} we give extensions and applications to PL homeomorphisms and discrete homeomorphisms.

\subsection{Acknowledgements} We thank S\o ren Galatius for many interesting conversations and much helpful advice. We thank Oscar Randal-Williams for helpful comments and a number of simplifications of the proofs of various lemma's. We also thank Rob Kirby, Sam Nariman and Frank Quinn for answering several questions.

\section{The Galatius-Randal-Williams argument for topological manifolds} \label{sec.proof} In this section we explain how to adapt to topological manifolds the argument of Galatius and Randal-Williams \cite{grwstab1} (see \cite{sorenoscarstability} for a less general, but easier version). The only results about topological manifolds needed for arguments in this paper are the following. We state them in abbreviated form here, leaving the precise technical statements to Section \ref{sec.input} (see Remark \ref{rem.plrefs} for a similar list in the PL category). 

\begin{description}
	\item[\hypertarget{item.ie}{IE}] \emph{Parametrized isotopy extension}, Theorem \ref{thm.ie}, says that an isotopy of embeddings of a submanifold can be extend to an ambient isotopy.
	\item[\hypertarget{item.it}{IT}] \emph{Immersion theory}, Theorem \ref{thm.it}, is an $h$-principle classifying immersions in terms of homotopy-theoretic data. 
	\item[\hypertarget{item.mt}{MT}] \emph{Microbundle transversality}, Theorem \ref{thm.mt}, says that any two submanifolds can be made transverse as long as one of them has a normal bundle.
	\item[\hypertarget{item.kt}{KT}] \emph{Kister's theorem}, Theorem \ref{thm.kt}, says that the inclusion $\Top(n) \to \mr{Emb}(\bR^n,\bR^n)$ is a weak equivalence.
	\item[\hypertarget{item.sh}{SH}] The \emph{stable homeomorphism theorem}, Theorem \ref{thm.sh}, says that any homeomorphism $\bR^n \to \bR^n$  is a composition of finitely many homeomorphisms which are the identity on some open subset.
	\item[\hypertarget{item.wt}{WT}] The \emph{Whitney trick}, Theorems \ref{thm.wt6}, for $2n \geq 6$ allows one to geometrically cancel algebraically canceling intersection  points.
\end{description}

The three subsections of this section contain the following steps of the proof:
\begin{enumerate}
\item The first subsection defines geometric models for the moduli space of topological manifolds and the stabilization map.
\item The second subsection defines a semisimplicial resolution of embedded cores, establishes all except one of its properties and finishes the homological stability argument.
\item The third subsection proves the one crucial property left unproven in the second subsection: that the semisimplicial simplicial set of embedded cores is highly-connected.
\end{enumerate}

Due to the technical details of topological manifolds, we need to use simplicial sets instead of spaces. A good reference for simplicial sets is \cite{goerssjardine}.

\subsection{Moduli spaces of topological manifolds}\label{subsec.modulispace}

Let $W$ be a 1-connected compact $2n$-dimensional topological manifold with boundary $S^{2n-1}$. Our goal is to give a geometric model for the classifying space of the homeomorphisms of $W$ that fix the boundary pointwise. In the smooth setting such a model is given by smooth submanifolds of $\bR^\infty$ diffeomorphic to $W$ satisfying a certain boundary condition. A similar model exists in the topological setting. It will be useful to use locally flat submanifolds, see Subsection \ref{subsec.basics} for definitions.

To make sure that the stabilization map preserves the locally flatness condition, we endow our submanifolds with germs of collars. Since $W$ has boundary $S^{2n-1}$, we can glue on an open collar $S^{2n-1} \times (-\epsilon,0]$ to obtain the manifold $W \cup (S^{2n-1} \times (-\epsilon,0])$. Let $\mr{std}$ denoting the standard embedding $S^{2n-1} \times (-\epsilon,0] \hookrightarrow \bR^{2n} \times (-\infty,0] \hookrightarrow \bR^{N-1} \times (-\infty,0]$.

\begin{definition}\label{def.embwg1} Let $N \geq 2n$. \begin{itemize}
\item  For $\epsilon > 0$, the simplicial set $\mr{Emb}^\mr{lf}_\epsilon(W,\bR^N)$ has $k$-simplices the locally flat embeddings $e$ fitting into a commutative diagram
\[\xymatrix{\Delta^k \times (W \cup (S^{2n-1} \times (-\epsilon,0])) \ar@{^(->}[rr]^-e \ar[rd] & & \Delta^k \times \bR^{N-1} \times \bR \ar[ld] \\
& \Delta^k & }\]
with the following two properties:
\begin{enumerate}[(i)]
	\item On $\Delta^k \times (S^{2n-1} \times (-\epsilon,0])$ the embedding $e$ equals $\mr{id} \times \mr{std}$.
	\item the preimage $e^{-1}(\Delta^k \times \bR^\infty \times (-\infty,0])$ consists of $\Delta^k \times S^{2n-1} \times (-\epsilon,0]$.
\end{enumerate}
\item Using the restrictions $\mr{Emb}^\mr{lf}_\epsilon(W,\bR^N) \to \mr{Emb}^\mr{lf}_{\epsilon'}(W,\bR^N)$ for $\epsilon'<\epsilon$ we define
\[\mr{Emb}^\mr{lf}_\partial(W,\bR^N) \coloneqq \underset{\epsilon \to 0}{\mr{colim}}\,\mr{Emb}^\mr{lf}_\epsilon(W,\bR^N)\]
\item Using the inclusions $\bR^{N} \hookrightarrow \bR^{N+1}$ we define \[\mr{Emb}^\mr{lf}_\partial(W,\bR^\infty) \coloneqq \underset{N \to \infty}{\mr{colim}}\,\mr{Emb}^\mr{lf}_\partial(W,\bR^N)\]
\end{itemize}
\end{definition}

In the smooth case, the space of embeddings of a manifold into $\bR^\infty$ is weakly contractible. The same is true for topological manifolds.

\begin{lemma}\label{lem.embcontract}We have that $\mr{Emb}^\mr{lf}_\partial(W,\bR^\infty)$ is weakly contractible.
\end{lemma}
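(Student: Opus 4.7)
The strategy is to adapt the classical proof that the space of embeddings of a compact manifold into $\bR^\infty$ is weakly contractible, carefully preserving local flatness and the boundary germ condition. I would show that for every $k \geq 0$, every map $f : \partial \Delta^{k+1} \to \mr{Emb}^\mr{lf}_\partial(W, \bR^\infty)$ extends to $\Delta^{k+1}$. Because $\partial \Delta^{k+1}$ has finitely many nondegenerate simplices and $\mr{Emb}^\mr{lf}_\partial(W, \bR^\infty)$ is defined as a double colimit, the map $f$ factors through $\mr{Emb}^\mr{lf}_\partial(W, \bR^N)$ for some finite $N$. Fixing a standard basepoint embedding $e_0$, it therefore suffices to exhibit a path of embeddings in $\mr{Emb}^\mr{lf}_\partial(W, \bR^{N'})$ from (the image of) $f$ to $e_0$ for some $N' \gg N$, parametrised over $\partial\Delta^{k+1}$.

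The construction has two stages, both taking place in coordinates of $\bR^{N'}$ that are newly available beyond $\bR^N$ (and hence do not touch the last ``height'' coordinate, so the collar condition is automatically preserved). First (shift): choose a bump function $\phi: W \to [0,1]$ that vanishes on a small collar neighbourhood and equals $1$ outside a slightly larger one, and pick fresh coordinate directions $v_1, v_2$ in $\bR^{N'}$. Define the path $s \mapsto f + s\,\phi\, v_1$; each stage of this path is obtained from $f$ by composition with an ambient shear homeomorphism of $\bR^{N'}$ (supported away from the standard collar), so is a family of locally flat embeddings. At $s=1$ the non-collar image of $f$ sits in the affine subspace $\bR^N + v_1$, which is disjoint from the subspace of $e_0$ (which has zero $v_1$-coordinate). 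Apply the analogous shift to $e_0$ using $v_2$. Second (rotate): with the shifted families occupying disjoint coordinate slabs off the collar, rotate from one to the other in the $(v_1,v_2)$-plane; the collar is fixed throughout because the rotation acts by the identity on it, and each intermediate map remains an embedding because the disjoint slab structure forces any coincidence of images to project to a coincidence under either $f$ or $e_0$, hence to equality of points in $W$.

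The main technical obstacle is verifying that local flatness persists throughout the shift and the rotation, especially at the collar transition where $\phi$ varies. The shift step presents no difficulty since it is the restriction of an ambient homeomorphism. For the rotation step, at points where $\phi$ is locally constant one inherits local flatness charts directly from either $f$ or $e_0$ via the product structure in the $(v_1,v_2)$-plane; across the transition, one combines such charts after using Kister's theorem (\KT) to parameterise the straightening of chart data continuously over the rotation parameter. Concatenating the shift and rotation paths, and carrying the construction over $\partial\Delta^{k+1}$, produces the required null-homotopy, completing the proof of weak contractibility.
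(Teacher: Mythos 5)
The overall strategy — reduce to finite $N$, pick a reference embedding $e_0$, and null-homotope via the classical ``swindle into fresh coordinates'' while respecting the collar condition — matches the spirit of the paper's proof, and your shift step is fine (a shear in a fresh direction, cut off by a bump function $\phi$ that vanishes on the collar, does give a path of locally flat embeddings fixing the boundary germ). But the \emph{rotate} step has a genuine gap. A rotation in the $(v_1,v_2)$-plane fixes the $\bR^N$-coordinates pointwise, so it carries $f+\phi v_1$ to $f+\phi(\cos\theta\,v_1+\sin\theta\,v_2)$ and ends at $f+\phi v_2$, \emph{not} at $e_0+\phi v_2$. It never changes the underlying embedding, so it cannot interpolate between $f$ and $e_0$. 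If instead you meant the straight-line homotopy $(1-t)(f+\phi v_1)+t(e_0+\phi v_2)$, the claimed ``disjoint slab'' argument for injectivity does not apply: off the collar the two shifted embeddings occupy the \emph{same} $\bR^N$-block and differ only in the single $v_1$- versus $v_2$-coordinate, so a coincidence of intermediate images gives $(1-t)f(w)+te_0(w)=(1-t)f(w')+te_0(w')$, from which $w=w'$ does not follow. For disjointness of supports to do any work, the reference $e_0$ must be inserted into an \emph{entire fresh block} of $N'$ coordinates, not merely translated by one fresh unit vector.

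That is exactly what the paper's proof does, and it also handles the issue your proposal glosses over at the collar transition: the resulting path must remain a locally flat embedding across the region where $\phi$ goes from $0$ to $1$. The paper's three steps are: first normalize $e$ to agree with the standard inclusion on a fixed collar piece $S^{2n-1}\times[0,1]$ and to be the only part of the image in $\bR^{N'-1}\times[0,1]$; then linearly introduce $e_0$ in a disjoint block of $N'$ fresh coordinates, with an explicit interpolation $(\mr{std}(w),\,tr\cdot\mr{std}(w))$ on the collar band $r\in[0,1]$ to keep the map injective and locally flat across the transition; and finally dampen out the original embedding via $((1-tr)\cdot\mr{std}(w),\,r\cdot\mr{std}(w))$ on the band and $((1-t)\cdot e,\,e_0)$ elsewhere. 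Local flatness of the concatenated path is then checked step by step via Lemma \ref{lem.lfglue} and Lemma \ref{lem.lfprod}. To repair your argument you would need to (a) place $e_0$ in a disjoint coordinate block so that the linear interpolation is automatically injective away from the collar, and (b) supply an explicit formula for the interpolation across the collar transition and verify it is a locally flat embedding there; both are nontrivial and are precisely what the paper's formulae accomplish. Two smaller omissions: you should note that $\mr{Emb}^\mr{lf}_\partial(W,\bR^\infty)$ is Kan (so that extending maps from $\partial\Delta^{k+1}$ really detects weak contractibility), and that non-emptiness, i.e.\ the existence of $e_0$, requires the locally flat Whitney embedding theorem (Corollary \ref{cor.whitney}).
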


\begin{proof}We first check it is non-empty. Corollary \ref{cor.whitney} proves the locally flat weak Whitney embedding theorem and the argument can easily be modified to give a locally flat embedding 
\[e_0: W \cup (S^{2n-1} \times (-\epsilon,0]) \hookrightarrow \bR^{N-1} \times \bR\]
that satisfies properties (i) and (ii) of Definition \ref{def.embwg1}. If we pick a collar $S^{2n-1} \times [0,2] \hookrightarrow W$, we can assume that $e_0$ is the standard inclusion on $S^{2n-1} \times [0,1]$ and this is the only intersection of $\mr{im}(e_0)$ with $\bR^{N-1} \times [0,1]$. 

We have that $\mr{Emb}^\mr{lf}_\partial(W,\bR^\infty)$ is Kan: since being Kan is preserved by colimits, it suffices to show that $\mr{Emb}^\mr{lf}_\epsilon(W,\bR^N)$ is Kan and this follows from Lemma \ref{lem.embkan}. Thus for weak contractibility we need to extend a map $\partial \Delta^i \to \mr{Emb}^\mr{lf}_\partial(W,\bR^\infty)$ to $\Delta^i$. That is, by Lemma \ref{lem.lfglue} (which depends on \IE) we have a locally flat embedding 
\[\xymatrix{\partial \Delta^i \times (W \cup (S^{2n-1} \times (-\epsilon',0]))  \ar@{^(->}[rr]^-e \ar[rd] & &  \partial \Delta^i \times \bR^{N'-1} \times \bR \ar[ld] \\
& \partial \Delta^i & }\]
and we need to extend it to $\Delta^i$, where we are allowed to decrease $\epsilon'$ and increase $N'$. We will do this by providing a three-step homotopy to a constant family.

\begin{enumerate}[(i)]
	\item Earlier we picked a collar $S^{2n-1} \times [0,2] \hookrightarrow W$. We start by showing that we can make $e$ the standard inclusion on $S^{2n-1} \times [0,1]$ and have this be the only intersection of $\mr{im}(e)$ with $\bR^{N'-1} \times [0,1]$. Let $\mr{std}$ denote the standard inclusion of $S^{2n-1} \times [0,1]$. For $t \in [0,1]$ and $\theta \in \partial \Delta^{i}$
	\[e^{(1)}_t(\theta)(w) = \begin{cases} (\mr{std}(w)) & \text{if $w \in S^{2n-1} \times (-\epsilon,t]$} \\
		(e(\theta)(\phi,2 \cdot \frac{r-t}{2-t})+t \cdot e_1) & \text{if $w = (\phi,r) \in S^{2n-1} \times [t,2]$} \\
		(e(\theta)(w)+t \cdot e_1) & \text{otherwise}\end{cases}\]
	\item We insert $e_0$ into $N'$ additional coordinates: for $t \in [0,1]$ and $\theta \in \partial \Delta^i$
	\[e^{(2)}_t(\theta)(w) = \begin{cases} (\mr{std}(w),0) & \text{if $w \in S^{2n-1} \times (-\epsilon,0]$} \\
	(\mr{std}(w),tr\cdot\mr{std}(w)) & \text{if $w = (\phi,r) \in S^{2n-1} \times [0,1]$} \\
	(e^{(1)}_1(\theta)(w),t\cdot e_0(w)) & \text{otherwise}\end{cases}\]
	where the notation $(x,y)$ means that $x \in \bR^{N'-1} \times \bR$ and $y \in \bR^{N'-1} \times \bR$.
	\item Next we dampen $e$ away: for $t \in [0,1]$ and $\theta \in \partial \Delta^{i}$
		\[e^{(3)}_t(\theta)(w) = \begin{cases} (\mr{std}(w),0) & \text{if $w \in S^{2n-1} \times (-\epsilon,0]$} \\
		((1-tr)\cdot \mr{std}(w),r \cdot\mr{std}(w)) & \text{if $w = (\phi,r) \in S^{2n-1} \times [0,1]$} \\
		((1-t)\cdot e^{(1)}_1(\theta)(w),e_0(w)) & \text{otherwise}\end{cases}\]
\end{enumerate}

The end result is a constant family given by the locally flat embedding
		\[e^{(3)}_1(\theta)(w) = \begin{cases} (\mr{std}(w),0) & \text{if $w \in S^{2n-1} \times (-\epsilon',0]$} \\
		((1-r)\cdot\mr{std}(w),r\cdot\mr{std}(w)) & \text{if $w = (\phi,r) \in S^{2n-1} \times [0,1]$} \\
		(0,e_0(w)) & \text{otherwise}\end{cases}\]

To check that the this family of embeddings is locally flat, we note that Lemma \ref{lem.lfglue} tells us it suffices to check this for each of the three steps individually. For step (i) this is clear and for steps (ii) and (iii) one uses Lemma \ref{lem.lfprod}. \end{proof}

In the smooth setting, the previous result is usually deduced from a quantitative version of the Whitney embedding theorem. This says that the space of embeddings into $\bR^N$ is highly-connected for large $N$. The following remarks address the question of a quantitative Whitney embedding theorem for topological embeddings.

\begin{remark}We will prove that if $N \geq 4n+2$, then $\mr{Emb}^\mr{lf}_\partial(W,\bR^N)$ is path-connected. For convenience we will assume $n \geq 3$. We are given a map $\partial \Delta^1 \to \mr{Emb}^\mr{lf}_\partial(W,\bR^N)$, which we want to extend to an embedding over $\Delta^1$. Extend it by any continuous map, and apply the relative version of general position as in Theorem \ref{thm.gp}. When $N \geq 4n+2$, the result is an extension to a locally flat embedding $e: \Delta^1 \times W \to \Delta^1 \times \bR^N$. This is not a family of embeddings over $\Delta^1$, but a concordance. Now use that concordance of embeddings implies isotopy in the range $N \geq 2n+3$ \cite{pedersenconcordances}.\end{remark}

\begin{remark}If $W$ admits a smooth structure, one can further prove that $\mr{Emb}^\mr{lf}_\partial(W,\bR^N)$ is $i$-connected when $N \geq 4n+2i+4$ using smoothing theory for embeddings. By the previous remark we can ignore $\pi_0$. Theorem A of \cite{lashofembeddings} says that 
\[\mr{hofib}\left[\mr{Emb}^\Diff_\partial(W,\bR^N) \to \mr{Emb}^\mr{lf}_\partial(W,\bR^N)\right] \to \mr{hofib}\left[\mr{Imm}^\Diff_\partial(W,\bR^N) \to \mr{Imm}^\mr{lf}_\partial(W,\bR^N)\right]\]
is an isomorphism on $\pi_i$ for $i>0$. In the smooth setting, the quantitative Whitney embedding theorem and immersion theory imply that $i$th homotopy groups of $\mr{Emb}^\Diff_\partial(W,\bR^N)$ and $\mr{Imm}^\Diff_\partial(W,\bR^N)$ vanish for $4n+2i+3 \leq N$ and $4n+i+1 \leq N$ respectively. This implies that 
\[\pi_i(\mr{Emb}^\mr{lf}_\partial(W,\bR^N))) = \pi_i(\mr{Imm}^\mr{lf}_\partial(W,\bR^N))\]
when $0<2i \leq N-4n-4$. Immersion theory (\IT) identifies the right hand space with a space of sections of a bundle with fiber given by the topological Stiefel manifold $\Top(N)/\Top(N,2n)$. This is highly-connected:  $\pi_i(\Top(N)/\Top(N,2n))=0$ for $2n+i+1 \leq N$ by the first remark on page 147 of \cite{lashofembeddings}. By obstruction theory the homotopy groups of the section space vanish for $4n+i+1 \leq N$, so we get the desired range.
\end{remark}

We will use Lemma \ref{lem.embcontract} to give a geometric model for the classifying space of the group of homeomorphisms. Instead of thinking of homeomorphisms as a topological group in the compact-open topology, it is more convenient to take the singular simplicial set and consider them as a simplicial group:

\begin{definition}Let $\Top_\partial(W)$ be the simplicial group with $k$-simplices given by the homeomorphisms $\varphi$ that fit in a commutative diagram
\[\xymatrix{\Delta^k \times W\ar[rr]_\cong^\varphi \ar[rd] & & \Delta^k \times W \ar[ld] \\
 & \Delta^k & }\]
and are the identity on $\Delta^k \times \partial W$.
 \end{definition}

The following lemma will be used to check that the quotient of locally flat embeddings by homeomorphisms is a model for $B\Top_\partial(W)$. Here $\Top_\partial(W)$ acts on $\mr{Emb}^\mr{lf}_\partial(W,\bR^\infty)$ by extending the homeomorphisms to be the identity on $S^{2n-1} \times (-\epsilon,0]$.

\begin{lemma}\label{lem.kanfibration} Let $G$ be a simplicial group and $X$ a non-empty Kan simplicial set with levelwise free $G$-action, then we have that
\begin{enumerate}[(i)]
\item $X \to X/G$ is a Kan fibration,
\item if $Y \to Z$ is a Kan fibration, then $X \times_G Y \to X \times_G Z$ is a Kan fibration, and
\item if $Y \to Z$ is a weak equivalence, then $X \times_G Y \to X \times_G Z$ is a weak equivalence.
\end{enumerate} \end{lemma}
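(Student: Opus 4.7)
My plan is to deduce all three claims from the principal $G$-bundle structure provided by a levelwise free action. For (i), I would run the standard argument for free simplicial group actions: given an extension $\sigma : \Delta^n \to X/G$ of a horn $\Lambda^n_k \to X$ (post-composed to $X/G$), pick any preimage $x \in X_n$ of $\sigma$, and for each $i \neq k$ use freeness to produce the unique $g_i \in G_{n-1}$ carrying $d_i x$ to the prescribed horn face in $X_{n-1}$. The identities $d_i g_j = d_{j-1} g_i$ for $i < j$ both $\neq k$ follow from freeness together with the simplicial identities, so the $g_i$ assemble into a horn $\Lambda^n_k \to G$, which fills because every simplicial group is a Kan complex; acting by the filler on $x$ yields the desired lift.

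For (ii), I would prove the following relative version: if $f : A \to B$ is a $G$-equivariant Kan fibration between simplicial sets with levelwise free $G$-actions, then $\bar f : A/G \to B/G$ is a Kan fibration. Given a lifting problem for $\bar f$, choose any preimage $s \in B_n$ of the extension, and for each $i \neq k$ choose a preimage $a_i' \in A_{n-1}$ of the prescribed horn face in $A/G$. Freeness of the action on $B$ provides a unique $g_i \in G_{n-1}$ with $g_i \cdot f(a_i') = d_i s$, and replacing $a_i'$ by $a_i := g_i \cdot a_i'$ preserves the lift of the horn face while arranging $f(a_i) = d_i s$. The compatibilities $d_i a_j = d_{j-1} a_i$ then hold because both sides lie in the same $G$-orbit in $A$ (they project to the same simplex of $A/G$ by the simplicial identities) and have equal $f$-image $d_i d_j s$; freeness of the action on $B$ forces the witnessing group element to be trivial. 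The $a_i$ thus glue into a horn $\tilde h : \Lambda^n_k \to A$ with $f \tilde h = s|_{\Lambda^n_k}$, and Kan-fibrancy of $f$ gives a filler in $A$ whose image in $A/G$ solves the original problem. Claim (ii) then follows by applying this relative version to $\mr{id}_X \times p : X \times Y \to X \times Z$, a Kan fibration equivariant for the diagonal (free) $G$-action.

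For (iii), I would use the commutative square
\[
\begin{array}{ccc}
X \times Y & \xrightarrow{\mr{id}_X \times p} & X \times Z \\
\pi_Y \downarrow & & \downarrow \pi_Z \\
X \times_G Y & \xrightarrow{\bar p} & X \times_G Z
\end{array}
\]
whose vertical maps are the Kan fibrations from (i), each a principal $G$-bundle with fiber $G$ (by freeness). The top arrow is a weak equivalence when $p$ is, and a five-lemma comparison of the two associated long exact sequences of homotopy groups, supplemented by a short $\pi_0$ argument using that the quotient maps are surjective on simplices, shows $\bar p$ induces isomorphisms on $\pi_n$ at every basepoint, hence is a weak equivalence. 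The main obstacle is the simplicial-identity bookkeeping needed to glue the adjusted face lifts $a_i$ into a single horn in $A$ in (ii); the trick is that each required identity reduces to an equation in $B$ whose witnessing element of $G$ must be the identity by freeness of the action on $B$.
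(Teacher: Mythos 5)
Your argument is correct, and it diverges from the paper's proof in an interesting way, mainly in part (ii). For part (i) the paper simply cites Corollary V.2.6 of Goerss--Jardine, whereas you unpack the standard argument: choose a preimage of the extending simplex, use freeness to adjust each face by a unique group element, verify the horn compatibilities via freeness and the simplicial identities, fill the resulting horn in $G$ (simplicial groups are Kan), and act on the chosen preimage. That is exactly the cited proof, so no real difference there beyond self-containedness.

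For part (ii), the paper takes a diagrammatic route: first lift the horn $\Lambda^n_i \to X \times_G Y$ to $X \times Y$ (lift a vertex by surjectivity, then use the Kan fibration $X \times Y \to X \times_G Y$ from part (i) along the acyclic cofibration $\{v\} \hookrightarrow \Lambda^n_i$), and then solve the remaining lifting problem against the composite Kan fibration $X \times Y \to X \times Z \to X\times_G Z$. You instead prove a clean intermediate statement: for any $G$-equivariant Kan fibration $f: A \to B$ between levelwise free $G$-simplicial sets, the induced map $A/G \to B/G$ is a Kan fibration. Your verification is sound; the key point that both $d_i a_j$ and $d_{j-1}a_i$ project to the same simplex of $A/G$ and have the same $f$-image $d_id_js$, so the witnessing group element must be trivial by freeness on $B$, is exactly what makes the horn in $A$ assemble. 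Applying this to $\mr{id}_X \times p$ with the diagonal action (free because the action on $X$ is free) recovers (ii). This relative statement is mildly more general and arguably more transparent; the paper's argument is shorter once one has internalized the model-categorical gymnastics, but relies on a composite that is slightly awkward to state (and in fact the paper has an apparent typo there, writing $X \times Y \to X \times Z \to X \times_G Y$ where it should be $X \times_G Z$). Part (iii) is essentially identical to the paper's proof: compare the principal $G$-fibration sequences via the long exact sequence, using that the induced map on fibers is the identity $G \to G$.
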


\begin{proof}Part (i) is Corollary V.2.6 of \cite{goerssjardine}. For part (ii), consider the diagram 
\[\xymatrix{& X \times Y \ar@{->>}[d] \ar@{->>}[r] & X \times Z \ar@{->>}[d] \\
& X \times_G Y \ar[r] & X \times_G Z  \\
& \Lambda^n_i \ar[r] \ar[u]^f \ar@{.>}@/^8ex/[uu]^{\tilde{f}} & \Delta^n \ar@{.>}[luu] \ar[u]_F}\]

We need to lift $F$ to $X \times_G Y$. We start by remarking the top map is a fibration since pullbacks preserve fibrations and the top vertical maps are Kan fibrations by part (i).

We next lift $f: \Lambda^n_i \to X \times_G Y$ to $X \times Y$. To do so, we note that we can lift a single vertex, since $X \times Y \to X \times_G Y$ is surjective. Now use that $\{v\} \hookrightarrow \Lambda^n_i$ is an acyclic cofibration and $X \times Y \to X \times_G Y$ is a Kan fibration, so we can lift the remainder of the horn to a map $\tilde{f}: \Lambda^n_i \to X \times Y$. Then the fact that $X \times Y \to  X \times Z  \to X \times_G Y$ is a Kan fibration gives us a lift $\tilde{F}: \Delta^n \to X \times Y$. Composing $\tilde{F}$ with the map $X \times Y \to X \times_G Y$ gives the desired lift of $F$ to $X \times_G Y$.

For part (iii), note there is a commutative diagram 
\[\xymatrix{X \times Y \ar[r]^\simeq \ar@{->>}[d] & X \times Z \ar@{->>}[d] \\
X \times_G Y \ar[r] & X \times_G Z}\]
with top map a weak equivalence. Part (i) implies the vertical maps are surjective Kan fibrations. Since the map on fibers is the identity map $G \to G$, the long exact sequence of homotopy groups implies the bottom map is a weak equivalence as well. 
\end{proof}

\begin{lemma}We have that $\mr{Emb}^\mr{lf}_\partial(W,\bR^\infty)/ \Top_\partial(W)$ is a classifying space for $\Top_\partial(W)$.
\end{lemma}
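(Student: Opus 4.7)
The plan is to verify that $E := \mr{Emb}^\mr{lf}_\partial(W,\bR^\infty)$ is a non-empty Kan simplicial set carrying a levelwise free $G$-action, with $G := \Top_\partial(W)$, and then to identify $E/G$ with $BG$ via a Borel construction, invoking Lemma \ref{lem.kanfibration}(iii) twice.

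For the freeness of the action, a $k$-simplex $\varphi$ of $G$ extends by the identity on $S^{2n-1} \times (-\epsilon,0]$ to a homeomorphism of the collared manifold over $\Delta^k$, and acts on $e \in E_k$ by $\varphi \cdot e = e \circ (\mr{id} \times \varphi^{-1})$. If $\varphi \cdot e = e$ then injectivity of $e$ in each fibre over $\Delta^k$ forces $\varphi = \mr{id}$. That $E$ is non-empty, Kan, and weakly contractible is recorded in (the proof of) Lemma \ref{lem.embcontract}.

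Next, pick a standard contractible Kan simplicial set $EG$ with levelwise free $G$-action, for instance the universal bundle $WG$ of \cite{goerssjardine}, so that by definition $BG = EG/G$. Consider the Borel construction $EG \times_G E$. Projection to the first factor descends to a map
\[EG \times_G E \longrightarrow EG/G = BG,\]
which is a weak equivalence by Lemma \ref{lem.kanfibration}(iii) applied with $EG$ in the role of $X$, $Y = E$ and $Z = \ast$, using that $E \to \ast$ is a weak equivalence. Projection to the second factor descends to
\[EG \times_G E \longrightarrow E/G,\]
which is a weak equivalence by the same lemma, now with $E$ in the role of $X$ (satisfying the hypotheses by the previous paragraph), $Y = EG$ and $Z = \ast$. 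Concatenating gives a zig-zag of weak equivalences $BG \simeq E/G$, which is the claim. No step presents a real obstacle: the heavy lifting is already contained in Lemmas \ref{lem.embcontract} and \ref{lem.kanfibration}, and the only care needed is to confirm that the precomposition action is levelwise free and to choose a model of $EG$ whose $G$-action is levelwise free so that Lemma \ref{lem.kanfibration} actually applies.
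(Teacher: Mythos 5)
Your proposal is correct: freeness of the precomposition action follows from injectivity of the embeddings exactly as you say, and the two applications of Lemma \ref{lem.kanfibration}(iii) (with $EG$ Kan, contractible and levelwise free, e.g.\ the model from \cite{goerssjardine}) do yield the zig-zag $BG = EG/G \xleftarrow{\;\simeq\;} EG \times_G E \xrightarrow{\;\simeq\;} E/G$, using the weak contractibility and Kan property of $E = \mr{Emb}^\mr{lf}_\partial(W,\bR^\infty)$ from Lemma \ref{lem.embcontract} and Lemma \ref{lem.embkan}.

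The paper's route is shorter and uses a different part of the same lemma: it invokes part (i), so that $E \to E/G$ is a principal Kan fibration with fibre $G$ and weakly contractible total space, which directly exhibits $E/G$ as a classifying space (no auxiliary model of $EG$ and no zig-zag are needed). Your Borel-construction argument buys something slightly different: it never has to interpret ``free quotient of a contractible Kan complex'' as a classifying space in its own right, since it reduces the claim to a comparison of weak homotopy types with a chosen standard model $EG/G$; the cost is introducing $EG$ and checking its hypotheses, plus the (harmless) identification $EG \times_G E \cong E \times_G EG$. Both arguments rest on the same two inputs — contractibility of the embedding space and levelwise freeness of the action — so the difference is one of packaging rather than substance; for the paper's later use (computing the homology of $B\Top_\partial(W)$ via $\cM(W)$) either formulation suffices.
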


\begin{proof}By Lemma \ref{lem.embcontract}, $\mr{Emb}^\mr{lf}_\partial(W,\bR^\infty)$ is weakly contractible. The result then follows from part (i) of Lemma \ref{lem.kanfibration} once we remark that the action of the $k$-simplices of $\Top_\partial(W)$ on the $k$-simplices of $\mr{Emb}^\mr{lf}_\partial(W,\bR^\infty)$ is free.\end{proof}

\begin{definition}We define
\[\cM(W) \coloneqq \mr{Emb}^\mr{lf}_\partial(W,\bR^\infty)/ \Top_\partial(W)\]
and call this the \emph{moduli space of topological manifolds homeomorphic to $W$}.\end{definition}

We now give a geometric model for the stabilization map. Let $W^\epsilon_{1,2}$ be $(S^{2n-1} \times (-\epsilon,1]) \# (S^n \times S^n)$, with connected sum along a locally flat embedding $D^{2n} \hookrightarrow S^{2n-1} \times (1/4,3/4)$. Using a construction as in Corollary \ref{cor.whitney}, for $N$ sufficiently large there exists a locally flat embedding of $W^\epsilon_{1,2}$ into $\bR^N \times (-\epsilon,1]$ which coincides near $\bR^N \times (-\epsilon,0]$ and $\bR^N \times \{1\}$ with the standard embedding $S^{2n-1} \times (-\epsilon,1] \hookrightarrow \bR^N \times (-\epsilon,1]$. See Figure \ref{fig.wpesilon}.

\begin{definition}\label{def.stab}The \emph{stabilization map} $t$ is given by
\begin{align*} t: \cM(W) &\to \cM(W \# (S^n \times S^n)) \\
  X &\mapsto (X+e_1) \cup W^\epsilon_{1,2}\end{align*}
\end{definition}

\begin{figure}
\centering{
\resizebox{75mm}{!}{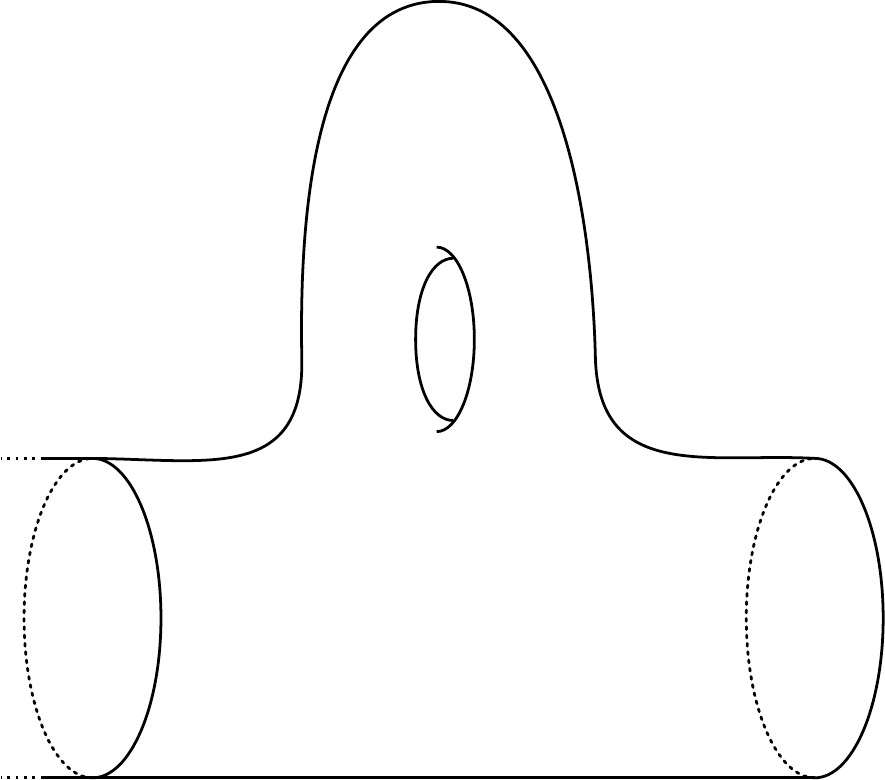}
\caption{The manifold $W_{1,2}^\epsilon$.}
\label{fig.wpesilon}
}
\end{figure}

\subsection{Resolving by cores} To prove homological stability, we build a semisimplicial resolution of $\cM(W)$ by embedded cores. The idea is as follows: it would be easy to prove homological stability if the stabilization map $t$ had an inverse. Such an inverse would exist if there was canonical choice of embedded copy of $W_{1,1}$. There is of course no such choice. However, as long as a certain complex of choices is highly-connected, we can still obtain a spectral sequence that allows us to prove homological stability.

\subsubsection{The resolution} We start by defining the objects which use resolve $\cM(W)$, see Figure \ref{fig.core} for an example.

\begin{definition}\label{def.cores} \begin{itemize}
\item Let $D^{2n} \hookrightarrow S^n \times S^n$ be a locally flat embedding that avoids $S^n \vee S^n$. The \emph{standard thick core} $H$ is given by 
\[H \coloneqq ((S^n \times S^n) \backslash \mr{int}(D^{2n})) \cup ([0,1] \times D^{2n-1})\] where the glueing happens along the lower hemisphere $\{1\} \times D^{2n-1} \hookrightarrow \partial ((S^n \times S^n) \backslash \mr{int}(D^{2n})) \cong S^{2n-1}$.
\item The \emph{standard core} $C \subset H$ is given by \[C \coloneqq (S^2 \vee S^2) \cup \gamma\subset W\] where $\gamma$ is a locally flat embedded arc in $W$ from a point in $S^2 \vee S^2$ to $(0,0) \in [0,1] \times D^{2n-1}$, which avoids $S^2 \vee S^2$ and coincides with $[0,1] \times \{0\}$ in $[0,1] \times D^{2n-1}$.
\end{itemize}\end{definition}

\begin{figure}
\centering{
\resizebox{75mm}{!}{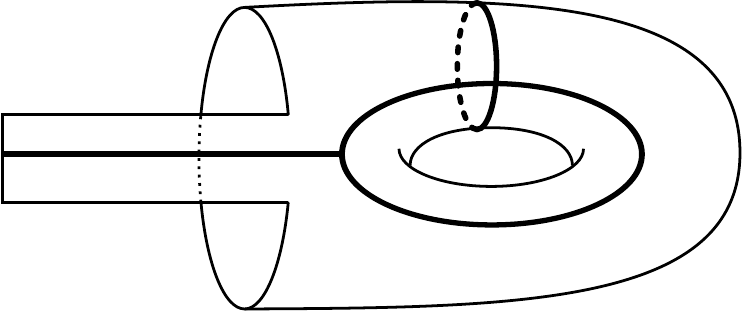}
\caption{The thick core $H$ containing a core $C$ for $n=1$.}
\label{fig.core}
}
\end{figure}

\begin{remark}A way to obtain $(S^n \times S^n) \backslash \mr{int}(D^{2n})$ is by \emph{plumbing} two copies of $S^n \times D^n$. That is, take two copies of $S^n \times D^n$ and let $D^n \subset S^n $ be the top hemisphere. Then identify the two copies of $D^n \times D^n \subset S^n \times D^n$ using the map $D^n \times D^n \to D^n \times D^n$ given by $(x,y) \mapsto (-y,x)$. We will eventually construct embedded thick cores $H$ by attaching a strip to such plumbings.\end{remark}

The strip $[0,1] \times D^{2n-1}$ is used to attach $H$ to the boundary. This should be done in a standard way, the definition of which requires us to fix a \emph{(germ of a) collar chart}. This is an equivalence class of a pair of $\delta>0$ and an embedding  
\[\eta: (-\delta,\delta) \times \bR^{2n-1} \to W \cup (S^{2n-1} \times (-\delta,0])\]
which on $(-\delta,0] \times \bR^{2n-1}$ coincides with $\mr{id} \times \hat{\eta}'$ for an embedding $\hat{\eta}': \bR^{2n-1} \to S^{2n-1}$. The equivalence relation is generated by $(\delta,\eta) \sim (\delta',\eta')$ if $\delta' < \delta$ and $\eta|_{(-\delta,\delta) \times \bR^{2n-1}} = \eta'$.

Recall that a semisimplicial simplicial set $X_\bullet$ is a sequence of simplicial sets $X_q$ for $q \geq 0$ with face maps $d_i: X_q \to X_{q-1}$ for $q \geq 0$ and $0 \leq i \leq q$, satisfy the standard relations between face maps.

\begin{definition}\label{def.k} Fix a germ  of  collar chart $[\delta,\eta]$. We define a semisimplicial simplicial set $K_\bullet(W)$ (where $\bullet$ denotes the semisimplicial direction). 
\begin{itemize}
\item The $0$-simplices $K_0(W)$ in semisimplicial direction, have $k$-simplices given by the colimit as $\epsilon \to 0$ of pairs $(t,\varphi)$ of a map $t: \Delta^k \to \bR$ and a $k$-simplex $\varphi$ of the simplicial set 
\[\mr{Emb}^\mr{lf}\left(H \cup ((-\epsilon,0] \times D^{2n-1}),W  \cup ((-\epsilon,0] \times S^{2n-1})\right)\]
such that $\varphi(s,r) = (s,r+te_2)$ for $(s,r)$ in the coordinates $(-\delta,\delta) \times D^{2n-1}$ of a representative of $[\delta,\eta]$.
\item The $q$-simplices $K_q(W)$ in semisimplicial direction, have $k$-simplices given by $(q+1)$-tuples $((t_0,\varphi_0),\ldots,(t_q,\varphi_q))$ of $k$-simplices of $K_0(W)$ such that $t_0<\ldots<t_q$ and for all $t \in \Delta^k$ the images of the $\varphi_i$ are disjoint.
\end{itemize}
\end{definition}

An argument similar to that in Lemma \ref{lem.embkan} proves:

\begin{lemma}\label{lem.kqkan}For each $q$, the simplicial set $K_q(W)$ is Kan.\end{lemma}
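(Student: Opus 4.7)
The plan is to imitate the argument for Lemma \ref{lem.embkan}, which establishes the Kan property for the underlying simplicial sets of locally flat embeddings, and to track the extra combinatorial structure carried by $K_q(W)$: the real parameters $t_j$, the strict ordering $t_0 < \cdots < t_q$, and the pointwise disjointness of the $\varphi_j$.

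Given a horn $f \colon \Lambda^\ell_i \to K_q(W)$, the first step is to unpack $f$ into $(q+1)$ matching families $(t_j,\varphi_j)$ for $j=0,\ldots,q$ defined over the faces making up the horn and satisfying the ordering, disjointness, and collar conditions on their common overlaps. I would then fix a continuous retraction $\rho \colon |\Delta^\ell| \to |\Lambda^\ell_i|$ --- for concreteness the piecewise linear radial retraction from an interior point of the missing $i$-th face --- and pull the horn data back along it: set $\tilde{t}_j \coloneqq t_j \circ \rho$ and prescribe $\tilde{\varphi}_j$ slice-wise by demanding that its restriction over $x \in |\Delta^\ell|$ equal the restriction of $\varphi_j$ over $\rho(x)$.

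Next I would check the conditions of Definition \ref{def.k} for this candidate extension. The ordering $\tilde{t}_0 < \cdots < \tilde{t}_q$ holds pointwise on $|\Delta^\ell|$ because it holds pointwise on $|\Lambda^\ell_i|$. Disjointness of the images of the $\tilde{\varphi}_j$ at each $x$ coincides with disjointness of the $\varphi_j$ at $\rho(x)$, and the collar normal form $\varphi_j(s,r) = (s, r + t_j e_2)$ is a slice-wise condition and so is preserved. Thus, modulo local flatness, the tuple $((\tilde{t}_0,\tilde{\varphi}_0),\ldots,(\tilde{t}_q,\tilde{\varphi}_q))$ is an $\ell$-simplex of $K_q(W)$ extending $f$.

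The main obstacle is verifying that each $\tilde{\varphi}_j$ is locally flat when viewed as a single map out of $|\Delta^\ell| \times (H \cup (-\epsilon,0] \times D^{2n-1})$. Slice-wise local flatness is immediate, but local flatness of a parametrized family is strictly stronger, since it requires a product chart transverse to the parameter direction at each image point. This is exactly the technical point addressed in the proof of Lemma \ref{lem.embkan}, and the same argument applies with only notational changes: because $\rho$ is locally a piecewise linear projection, local product charts witnessing local flatness of the family over $|\Lambda^\ell_i|$ pull back to local product charts over $|\Delta^\ell|$, and the conclusion follows.
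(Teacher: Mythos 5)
Your proof correctly reconstructs what the paper leaves implicit --- the paper's own proof is only the remark ``an argument similar to that in Lemma \ref{lem.embkan}'' --- namely: glue the compatible face data to a locally flat family over the whole horn via Lemma \ref{lem.lfglue} (this is what licenses your appeal to local flatness ``of the family over $|\Lambda^\ell_i|$''), pull back along a retraction, and observe that the extra conditions in Definition \ref{def.k} (strict ordering, disjointness of images, collar normal form, and compatibility of $\epsilon$'s) are all slice-wise and hence automatically preserved. One small slip in the parenthetical: radial projection from an interior point $p$ of the missing $i$-th face does not extend continuously over $p$ and so is not a retraction of $\Delta^\ell$ --- project instead from a point outside $\Delta^\ell$ on the far side of that face, or simply note that any continuous retraction works, since invariance of domain makes the pulled-back product chart a homeomorphism onto its image.
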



A resolution is a particular type of augmented semisimplicial simplicial set $X_\bullet$. This is a sequence of simplicial sets $X_q$ for $q \geq -1$ with face maps $d_i: X_q \to X_{q-1}$ for $q \geq 0$ and $0 \leq i \leq q$ and augmentation map $\epsilon: X_0 \to X_{-1}$. The $d_i$ satisfy the standard relations between face maps and the augmentation $\epsilon$ satisfies $\epsilon d_0 = \epsilon d_1$.

\begin{definition}We define an augmented semisimplicial simplicial set $X_{\bullet}(W)$ (with $\bullet$ denoting the semisimplicial direction). For $q \geq 0$, the $q$-simplices in the semisimplicial direction are given by the simplicial set 
\[X_{q}(W) \coloneqq \mr{Emb}_\partial(W,\bR^\infty) \times_{\Top_\partial(W)} K_q(W)\]
We set $X_{-1}(W) = \cM(W)$ and the unique map $K_0(W) \to *$ induces an augmentation 
\[\epsilon: X_{0}(W) \to \cM(W)\]
\end{definition}

The following proposition is the core of the argument and will be proven in the next subsection as Proposition \ref{prop.kconn}. We will assume it for the moment. 

\begin{definition}\label{def.gw} We define the number  $g(W)$ by
\[g(W):= \max\{g \mid \text{we can embed $g$ disjoint copies of $(S^n \times S^n) \backslash \mr{int}(D^{2n})$ into $W$}\}\]
\end{definition}

\begin{proposition}\label{prop.connk}The semisimplicial simplicial set $K_\bullet(W)$ is $ \frac{g(W)-4}{2} $-connected.\end{proposition}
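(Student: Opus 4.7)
The plan is to adapt the proof of the analogous proposition in \cite{grwstab1} to the topological category, using the six foundational results \IE, \IT, \MT, \KT, \SH, and \WT. Since each $K_q(W)$ is Kan (Lemma~\ref{lem.kqkan}), proving $\frac{g(W)-4}{2}$-connectedness reduces to an inductive extension problem: given a $k$-parameter family of disjoint thick cores $((t_0,\varphi_0),\ldots,(t_p,\varphi_p))$ with $p \leq \frac{g(W)-4}{2}$ and prescribed boundary behavior, produce an additional disjoint thick core. Parametrized isotopy extension (\IE), Kister's theorem (\KT), and uniqueness of topological regular neighborhoods let us replace thick-core data by the data of an embedded core $C = (S^n \vee S^n) \cup \gamma$ together with a germ of its tubular microbundle, so the problem reduces to finding one more disjointly embedded core.

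The geometric heart of the argument is the construction of this additional core. Since $p+1 \leq \frac{g(W)-2}{2}$, the definition of $g(W)$ guarantees room to embed another copy of $(S^n \times S^n) \setminus \mr{int}(D^{2n})$ somewhere in $W$; one can then find an embedded wedge $S^n \vee S^n$ inside it. Microbundle transversality (\MT) makes this new wedge meet the existing cores transversally; standard algebraic arguments---together with tubing the new wedge with existing cores when necessary, which accounts for the $-4$ offset in the bound---annihilate the algebraic intersection numbers with each existing $C_i$. Since $W$ is 1-connected and $2n \geq 6$, the topological Whitney trick (\WT) then geometrically cancels the remaining intersection pairs, yielding an embedded core disjoint from the $C_i$.

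All of the above tools---immersion-theoretic realization (\IT), transversality (\MT), the Whitney trick (\WT), and ambient isotopy extension (\IE)---are available parametrically in the topological category. They allow the construction to be carried out compatibly over the full simplex of parameters, matching the prescribed boundary data. The main obstacle is the parametrized Whitney trick: while its pointwise version is provided by \WT, organizing the cancelling isotopies into a continuous family that agrees with given boundary data requires careful application of \IE, following the template of \cite{grwstab1}.
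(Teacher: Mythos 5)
There is a genuine gap, and it is in the very first step: connectivity of $||K_\bullet(W)||$ does \emph{not} reduce to the extension problem ``given a $p$-simplex of disjoint thick cores, produce one more disjoint core.'' A complex in which every low-dimensional simplex extends can still fail to be highly connected (it can even be disconnected), and after simplicial approximation a map $S^i \to ||K_\bullet(W)||$ involves many vertices whose cores intersect each other arbitrarily, so there is no single ``family of disjoint cores'' to extend. Relatedly, your appeal to the definition of $g(W)$ to find ``room'' for a new copy of $(S^n\times S^n)\setminus \mr{int}(D^{2n})$ does not work: the cores already present need not be among (or disjoint from) the $g(W)$ embedded copies witnessing Definition \ref{def.gw}, and they may meet all of them. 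This is exactly why the paper's proof (following \cite{grwstab1}) routes the connectivity statement through algebra: one forms the quadratic module $(I^{\mr{Fr}}_n(W),\lambda,\mu)$, invokes the Galatius--Randal-Williams connectivity theorem for the complex $K^{\mr{alg}}_\circ$ of orthogonal hyperbolic summands (Theorem \ref{thm.connalg}) -- which contains at least $g(W)$ orthogonal copies of $H$ -- and then lifts a nullhomotopy in $K^{\mr{alg}}_\circ$ to $K^\delta_\circ(W)$ vertex by vertex via the coloring lemma (Theorem \ref{thm.coloring}) and Lemma \ref{lem.lifttransverse}, realizing each algebraic hyperbolic pair geometrically by immersion theory, transversality, the Whitney trick and plumbing. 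The $\tfrac{g-4}{2}$ offset is the algebraic connectivity bound, not a byproduct of tubing the new wedge to old cores. Your proposal contains no algebraic input that could play this role, and the ``standard algebraic arguments'' you invoke only cancel intersection numbers of one prospective core; they do not control the homotopy type of the space of all choices, which is the actual content of the proposition. A further complication you omit, specific to the topological category, is that transversality is not an open condition, which the paper handles with the badness argument of Lemma \ref{lem.transverse} before the lifting step.

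Your final paragraph also points the argument in a direction the paper deliberately avoids: there is no parametrized Whitney trick here. The Whitney trick is applied only to single, discrete cores in the simplicial complex $K^\delta_\circ(W)$; the passage to families is done afterwards, formally, by comparing $K^\delta_\bullet(W)$ with the topologized complex $K^C_\bullet(W)$ through a bisemisimplicial resolution and a microfibration argument (Proposition \ref{prop.kcconn}, using Lemma \ref{lem.microfib} and the openness of core-disjointness), and finally by shrinking thick cores onto their cores to identify $K_\bullet(W) \hookrightarrow K^C_\bullet(W)$ as a weak equivalence. Attempting instead to perform the cancelling isotopies continuously over a simplex of parameters, as you suggest, is not something \IE\ alone can deliver and is not the template of \cite{grwstab1}; without the discretization-plus-microfibration step your outline has no mechanism for handling families at all.
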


The homological stability argument will use the geometric realization spectral sequence. For an augmented semisimplicial simplicial set $X_\bullet$ this spectral sequence is given by
\[(E^1_{p,q},d^1) = \left(H_q(X_p),{\textstyle \sum}_i (-1)^i (d_i)_*\right) \Rightarrow H_{p+q+1}(X_{-1},||X_\bullet||)\]
where $||X_\bullet||$ is the simplicial space obtained as the (fat) geometric realization of the semisimplicial direction of $X_\bullet$. That is, the $E^1$-page has non-zero columns for $p \geq -1$, given by the homology of the $p$-simplices. The $d^1$-differential $d^1: E^1_{p+1,q} \to E^1_{p,q}$ is  the alternating sum of the maps induced by the face maps when $p \geq 0$, and is the map induced by augmentation when $p = -1$. 

\subsubsection{Identifying the spectral sequence} For $X_\bullet(W)$, we do the following:
\begin{itemize}
\item in Proposition \ref{prop.resconn} we identify the target of the spectral sequence,
\item in Proposition \ref{prop.rese1} we identify the entries on the $E^1$-page, and
\item in Proposition \ref{prop.resd1} we identify the $d^1$-differential.
\end{itemize} 

We start with the target. This uses the following result of Rezk, Proposition 5.4 of \cite{rezkfib}, which describes a situation when taking homotopy fibers commutes with geometric realization. It replaces a similar result in the topological setting, Lemma 2.1 of \cite{oscarresolutions}. These results compares two constructions of a homotopy fiber. If $X_\bullet \to Y_\bullet$ is a map of simplicial spaces and we pick a $0$-simplex $y_0: \Delta^0 \to Y_\bullet$, we can form the \emph{homotopy fiber after realization} $\mr{hofib}_{|y_0|}(|X_\bullet| \to |Y_\bullet|)$. However, we can also take the \emph{homotopy fiber before realization} $\mr{hofib}_{y_0}(X_\bullet \to Y_\bullet)$, given by $[q] \mapsto \mr{hofib}_{(y_0)_q}(X_q \to Y_q)$.

\begin{proposition}[Rezk] \label{prop.rezk} Let $X_\bullet \to Y_\bullet$ be a map of Reedy cofibrant simplicial spaces such that $[q] \mapsto \pi_0(Y_q)$ is a discrete simplicial set. Then we have that
\[\mr{hofib}_{y_0}(|X_\bullet| \to |Y_\bullet|) \simeq |\mr{hofib}_{y_0}(X_\bullet \to Y_\bullet)|\]\end{proposition}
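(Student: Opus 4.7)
The plan is to deduce the statement from the Bousfield--Friedlander realization fibration theorem. First I would factor the given map of Reedy cofibrant simplicial spaces as $X_\bullet \to X'_\bullet \to Y_\bullet$ where the first arrow is a levelwise weak equivalence and the second is a Reedy fibration (in particular a levelwise Kan fibration), which can be arranged while preserving Reedy cofibrancy of the source. Since the first map is a levelwise weak equivalence, it commutes with both the before-realization and after-realization constructions of homotopy fiber, so the statement reduces to showing that for a Reedy fibration $X'_\bullet \to Y_\bullet$ between Reedy cofibrant simplicial spaces the strict levelwise fiber $F_\bullet = \mr{fib}_{y_0}(X'_\bullet \to Y_\bullet)$---which now agrees levelwise with the homotopy fiber---realizes to the homotopy fiber of $|X'_\bullet| \to |Y_\bullet|$.

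The key input is the classical result that for a levelwise Kan fibration between Reedy cofibrant simplicial spaces the induced map on realizations is a quasi-fibration provided $Y_\bullet$ satisfies the $\pi_*$-Kan condition. The next step is to verify this condition. For $\pi_0$ it asks exactly that $[q] \mapsto \pi_0(Y_q)$ be a Kan simplicial set; our hypothesis that it is \emph{discrete} (equivalently, constant) is strictly stronger and, crucially, allows us to choose a compatible family of basepoints $y_q \in Y_q$ lifting $y_0$. For $n \geq 1$ the simplicial set $[q] \mapsto \pi_n(Y_q, y_q)$ takes values in groups, and every simplicial group is automatically Kan, so the higher $\pi_n$-Kan conditions come for free.

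Once this is in place, Bousfield--Friedlander gives that $|X'_\bullet| \to |Y_\bullet|$ is a quasi-fibration, so its homotopy fiber over $|y_0|$ is identified with $|F_\bullet|$. Combining with the reduction of the first step yields the claimed equivalence
\[\mr{hofib}_{y_0}(|X_\bullet| \to |Y_\bullet|) \simeq |\mr{hofib}_{y_0}(X_\bullet \to Y_\bullet)|.\]
The main obstacle is precisely the step of checking the $\pi_*$-Kan condition: it is subtle because in general one must verify it over every component and every basepoint simultaneously, and naive Kan-ness of $\pi_0(Y_\bullet)$ does not give enough rigidity. This is why the proposition demands the stronger hypothesis that $[q]\mapsto \pi_0(Y_q)$ be discrete---it produces a coherent levelwise choice of basepoint that commutes with all face maps, after which the group structure on $\pi_n$ for $n \geq 1$ disposes of the remaining obligations automatically.
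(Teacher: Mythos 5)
The paper does not actually prove this proposition: it is simply cited as Proposition~5.4 of \cite{rezkfib}, so there is no in-text argument to compare yours against. That said, your route through the Bousfield--Friedlander realization theorem is the most natural attack, and the outline is mostly sound, but there is a genuine gap in the middle. The classical theorem you are invoking (Bousfield--Friedlander Theorem~B.4, or Goerss--Jardine Theorem~IV.4.9) requires the $\pi_*$-Kan condition on \emph{both} the source and the target of the levelwise fibration, in addition to requiring $[q]\mapsto\pi_0(Y_q)$ to be a Kan simplicial set. You verify the conditions on $Y_\bullet$ --- the discreteness hypothesis indeed takes care of both the $\pi_0$ condition and, via the simplicial-group observation, the $\pi_*$-Kan condition for $Y_\bullet$ --- but you never address the source $X'_\bullet$. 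This is not automatic: nothing in the hypotheses controls $\pi_0(X'_\bullet)$, which can vary with the simplicial degree even when $\pi_0(Y_\bullet)$ is constant, so one cannot run the ``choose compatible basepoints, then simplicial groups are Kan'' argument for $X'_\bullet$. If the condition on the source were always a consequence of the condition on the target for a levelwise fibration, Bousfield--Friedlander would not have stated it as a separate hypothesis.

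This is exactly where Rezk's result sharpens Bousfield--Friedlander: it asserts that the discreteness of $\pi_0 Y_\bullet$ alone suffices, with no hypothesis on the source. Rezk's proof does not proceed by direct appeal to Theorem~B.4; instead he develops a general theory of ``sharp'' maps and realization fibrations in \cite{rezkfib}, characterizes realization fibrations intrinsically, and then verifies the discrete-$\pi_0$ criterion directly against that characterization. To repair your argument you would either need to prove that your fibrant replacement $X'_\bullet$ can be chosen to also satisfy the $\pi_*$-Kan condition (which requires a further argument and is not part of the standard Reedy factorization), or you would need to invoke Rezk's stronger version of the realization theorem in place of the classical one --- at which point you have essentially assumed the statement.
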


\begin{proposition}\label{prop.resconn} The map $||\epsilon||: ||X_\bullet(W)|| \to \cM(W)$ obtained as the realization of the augmentation map is $ \frac{g(W)-2}{2} $-connected.\end{proposition}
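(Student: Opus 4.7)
The plan is to identify the homotopy fiber of $||\epsilon||$ with $||K_\bullet(W)||$ and then invoke Proposition \ref{prop.connk}.

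First, I would verify that for each $q \geq 0$ the projection
\[X_q(W) = \mr{Emb}^\mr{lf}_\partial(W,\bR^\infty) \times_{\Top_\partial(W)} K_q(W) \longrightarrow \cM(W)\]
is a Kan fibration whose fiber over any $0$-simplex is isomorphic to $K_q(W)$. This is a direct application of Lemma \ref{lem.kanfibration}(ii) with $Y = K_q(W) \to Z = *$, using Lemma \ref{lem.kqkan} to guarantee that $K_q(W)$ is Kan. The identification of the fiber uses the levelwise freeness of the $\Top_\partial(W)$-action on $\mr{Emb}^\mr{lf}_\partial(W,\bR^\infty)$.

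Second, I would view $\cM(W)$ as a constant semisimplicial object in the $\bullet$-direction and apply a semisimplicial variant of Proposition \ref{prop.rezk} to the map $X_\bullet(W) \to \cM(W)$. The hypothesis that $[q] \mapsto \pi_0$ is discrete is trivial for a constant object, and Reedy cofibrancy is automatic for semisimplicial objects once one uses fat geometric realization. Combined with the first step, this yields
\[\mr{hofib}_{[e]}\bigl(||X_\bullet(W)|| \longrightarrow \cM(W)\bigr) \simeq ||K_\bullet(W)||.\]

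Third, by Proposition \ref{prop.connk}, the semisimplicial simplicial set $K_\bullet(W)$ is $\frac{g(W)-4}{2}$-connected, so its fat realization is $\frac{g(W)-4}{2}$-connected as well. Since a map whose homotopy fiber is $k$-connected is $(k+1)$-connected (by the long exact sequence in homotopy), we conclude that $||\epsilon||$ is $\frac{g(W)-2}{2}$-connected, as desired.

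The main obstacle will be the careful invocation of the semisimplicial analog of Rezk's theorem, specifically ensuring that fat geometric realization is the correct model for $||X_\bullet(W)||$ so that Reedy cofibrancy comes for free, and that the levelwise Kan fibration structure is preserved in the sense required. A subsidiary point is to check that the fibers of $X_q(W) \to \cM(W)$ over different basepoints are all weakly equivalent to $K_q(W)$; this follows from the path-connectedness (indeed weak contractibility) of $\mr{Emb}^\mr{lf}_\partial(W,\bR^\infty)$ provided by Lemma \ref{lem.embcontract}.
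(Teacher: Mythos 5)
Your proposal is correct and follows essentially the same route as the paper: identify the homotopy fiber of $||\epsilon||$ with $||K_\bullet(W)||$ by combining the Kan-fibration structure from Lemmas \ref{lem.kqkan} and \ref{lem.kanfibration}(ii) with Rezk's commutation of homotopy fibers and realization, then apply Proposition \ref{prop.kconn}. The ``semisimplicial variant'' of Proposition \ref{prop.rezk} that you flag as the main obstacle is handled in the paper precisely by freely adjoining degeneracies to get a Reedy cofibrant simplicial space whose thin realization agrees with the fat realization, which is the same maneuver you sketch.
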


\begin{proof}By freely adjoining degeneracies we can make the semisimplicial space $X_\bullet(W)$ into a simplicial space $\tilde{X}_\bullet(W)$:
\[\tilde{X}_q(W) = \coprod_{[q] \twoheadrightarrow  [q']} X_{q'}(W)\]
This is Reedy cofibrant and has the property that the thin geometric realization of $\tilde{X}_\bullet(W)$ is isomorphic to the thick geometric realization of $X_\bullet(W)$. Consider the following two simplicial spaces, the second of which is constant:
\[[q] \mapsto \tilde{X}_q(W) \qquad [q] \mapsto \cM(W)\]

The augmentation $\epsilon$ can be used to produce a simplicial map $\tilde{\epsilon}$ between them which realizes to the map $||\epsilon||$. Since $ [q] \mapsto \pi_0(\cM(W))$ is a discrete simplicial set, by Proposition \ref{prop.rezk} the homotopy fiber of the map
\[|\tilde{\epsilon}|:  |\tilde{X}_\bullet(W)| \cong ||X_\bullet(W)||  \to \cM(W)\] over a $0$-simplex $Y$, is the same as the realization of the levelwise homotopy fiber over $Y$. 

We now compute the levelwise homotopy fiber. For fixed $q$, the map $\tilde{\epsilon}$ is given by
\[\coprod_{[q] \twoheadrightarrow  [q']} \mr{Emb}_\partial(W,\bR^\infty) \times_{\Top_\partial(W)} K_{q'}(W) \to \mr{Emb}_\partial(W,\bR^\infty) \times_{\Top_\partial(W)} *\]
and hence a Kan fibration by Lemma \ref{lem.kqkan} and part (ii) of Lemma \ref{lem.kanfibration}. Thus the levelwise homotopy fiber is the same as the levelwise fiber over $Y$, and the realization of the latter is given by $|\tilde{\epsilon}|^{-1}(Y)$. This is isomorphic to $||K_\bullet(Y)||$, which is $ \frac{g(W)-4}{2} $-connected by Proposition \ref{prop.kconn} as $Y$ is homeomorphic to $W$ rel boundary.\end{proof}

We next identify the $E^1$-page. To do so, we need transitivity and cancellation lemma's that are almost identical to Corollaries 4.4 and 4.5 of \cite{sorenoscarstability}. It uses $K^\delta_\circ(W)$ as in Definition \ref{def.kdelta}, essentially the $(0,0)$-simplices of $K_\bullet(W)$.

\begin{lemma}\label{lem.transitivity} Suppose that $g(W) \geq 4$, then for given any two vertices $(t_0,\varphi_0)$, $(t_1,\varphi)$ of $K^\delta_\circ(W)$, there exists a homeomorphism of $W$ sending $\varphi_0$ to $\varphi_1$, which is isotopic to the identity on $\partial W$.\end{lemma}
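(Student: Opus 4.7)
I follow the proof of Corollary~4.4 of \cite{sorenoscarstability}, substituting the topological tools of Section~\ref{sec.input} for the smooth ones. The argument has two stages: first reduce to the case where the two embedded thick cores are disjoint, then construct an explicit swap homeomorphism inside a standard $W_{2,1}$-shaped neighborhood.

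\emph{Reduction to disjoint cores.} Since $g(W) \geq 4$, and passing to the complement of $\varphi_0(H) \cup \varphi_1(H)$ consumes at most two units of genus, the complement still contains an embedded plumbing $P \cong (S^n \times S^n) \setminus \mathrm{int}(D^{2n})$. I attach a locally flat strip from $\partial P$ down to the boundary collar at a new height $t_2 \notin \{t_0, t_1\}$; microbundle transversality \MT\ applied to the arc underlying this strip makes it disjoint from $\varphi_0(H) \cup \varphi_1(H)$. This produces $(t_2, \varphi_2) \in K_0^\delta(W)$ with $\varphi_2(H)$ disjoint from both existing cores. Applying the disjoint case of the lemma to the pairs $(\varphi_0,\varphi_2)$ and $(\varphi_2,\varphi_1)$ and composing the two resulting homeomorphisms handles the general case, so from now on I assume $\varphi_0(H) \cap \varphi_1(H) = \emptyset$.

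\emph{The swap.} I build an embedded standard copy $N \cong W_{2,1}$ inside $W$ that contains $\varphi_0(H) \cup \varphi_1(H)$ together with their attaching strips, and meets $\partial W$ in two disjoint standardly embedded disks. Kister's theorem \KT\ trivializes microbundle neighborhoods of the two plumbings, and parametrized isotopy extension \IE\ standardizes the strips within the collar; gluing these pieces yields $N$. The obvious involution of the abstract $W_{2,1}$ exchanging its two summands and their attaching strips fixes $\partial N \setminus \partial W$ pointwise, so it extends by the identity on $W \setminus N$ to a self-homeomorphism of $W$ sending $\varphi_0$ to $\varphi_1$. On $\partial W \cong S^{2n-1}$ this acts as the swap of two disjoint disks, which is isotopic to the identity. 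The main obstacle is the construction of $N$ with the correct standard boundary behavior: one must carefully combine \KT\ and \IE\ with locally flat collar uniqueness to glue the tubular neighborhoods of the two cores into a single standardly embedded $W_{2,1}$, so that the constructed swap fixes the outer boundary pointwise and extension by the identity makes sense.
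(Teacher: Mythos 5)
The swap step of your argument is essentially the paper's: build a standardly embedded genus-two neighborhood of the two (now disjoint) thick cores together with the boundary, and transport the involution exchanging the two $H$-summands from the abstract model; this matches the use of Corollary~4.4 of \cite{sorenoscarstability} that the paper invokes, modulo the cosmetic choice of taking a regular neighborhood of $\partial W\cup \mathrm{im}(\varphi_0)\cup\mathrm{im}(\varphi_1)$ (so $W_{2,2}$, with $\partial W$ as one boundary component) rather than your $W_{2,1}$.

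The gap is in the \emph{reduction to disjoint cores}, and it is a real one. You assert that ``passing to the complement of $\varphi_0(H)\cup\varphi_1(H)$ consumes at most two units of genus,'' so a plumbing disjoint from both survives. The invariant $g(W)$ of Definition~\ref{def.gw} is a maximum over disjoint geometric embeddings, and nothing in the paper, nor anything elementary, controls how it drops when you delete a union of two \emph{overlapping} thick cores. Two cores in general position can have nontrivial pairwise intersection numbers and can meet every one of the $g(W)$ disjoint plumbings witnessing $g(W)\geq 4$; producing a new embedded plumbing in their complement then requires making new $n$-spheres transverse and then disjoint via Whitney moves, exactly the content of the lifting argument in Lemma~\ref{lem.lifttransverse}. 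Microbundle transversality alone (which handles your connecting strip, since that is low-dimensional) does not produce the plumbing $P$. In effect your reduction is a special case of the connectivity statement you would be trying to bypass.

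The paper instead reduces the overlapping case to the disjoint case by invoking the path-connectedness of $K^\delta_\circ(W)$ for $g(W)\geq 4$, which is the $i=0$, $p=-1$ case of Proposition~\ref{prop.kdelta}. That proposition is proved later, by the full lift-from-algebra machinery through $K^{\mathrm{alg}}_\circ(I^{\mathrm{Fr}}_n(W),\lambda,\mu)$, and this forward reference is the actual, nontrivial ingredient. If you want a self-contained argument along your lines, you would need to reprove (a fragment of) Lemma~\ref{lem.lifttransverse} rather than appeal to genus bookkeeping. One further small point: your swap step needs the \emph{thick} cores disjoint, not merely the cores $\varphi_i(C)$; as the paper notes, this is handled by first shrinking each $H$ onto a small neighborhood of $C$, and your write-up should include that shrinking step explicitly.
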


\begin{proof}One first does the case that the images of $\varphi_0$ and $\varphi_1$ are disjoint. In that case a regular neigborhood of $\partial W \cup \mr{im}(\varphi_0) \cup \mr{im}(\varphi_1)$ is abstractly homeomorphic to $W_{2,2} = (S ^{2n-1} \times [0,1])\# 2(S^n \times S^n)$ with two standard copies of $H$ in it. The same construction as in Corollary 4.4 of \cite{sorenoscarstability} gives a homeomorphism of $W_{2,2}$ which is the identity on the first boundary, isotopic to the identity on the second boundary and swaps the two copies of $H$. The case where only the cores of $\varphi_0$ and $\varphi_1$ are disjoint reduces to the previous case by first shrinking both copies of $H$ by an isotopy onto a smaller neighborhood of the image of $C$, until they are disjoint.

If $\varphi_0$ and $\varphi_1$ are not disjoint, it suffices to show we can find a path of vertices $K^\delta_\circ(W)$ connecting them. This follows from $g(W) \geq 4$, since the complex $K^\delta_\circ(W)$ is then path-connected by Proposition \ref{prop.kdelta}. 
\end{proof}

\begin{lemma}\label{lem.cancellation} Let $W$ and $W'$ be compact 1-connected topological manifold of dimension $2n \geq 6$ with given identifications $\partial W \cong S^{2n-1} \cong \partial W'$ and homeomorphism $W \# (S^n \times S^n) \to W' \# (S^n \times S^n)$ rel boundary. Suppose $g(W \# (S^n \times S^n)) \geq 4$, then there exists a homeomorphism $W \to W'$ rel boundary.
\end{lemma}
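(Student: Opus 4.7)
The plan is to follow the smooth argument of Corollary~4.5 in \cite{sorenoscarstability}: align the image of the standard thick core with the standard one using the transitivity of cores, then restrict to complements. Write $M := W \# (S^n \times S^n)$ and $M' := W' \# (S^n \times S^n)$, and let $H \subset M$, $H' \subset M'$ be the standard embedded thick cores, attached to the boundary via the fixed collar charts. Since $g$ is a homeomorphism invariant, $g(M') = g(M) \geq 4$, and $\phi(H)$ is a second embedded thick core in $M'$; Lemma~\ref{lem.transitivity} applied to $M'$ then provides a homeomorphism $\psi$ of $M'$ sending $\phi(H)$ to $H'$ and isotopic to the identity on $\partial M'$.

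Setting $\chi := \psi \circ \phi : M \to M'$ we obtain a homeomorphism with $\chi(H) = H'$ whose restriction to $\partial M'$ is isotopic to the identity. By the construction of the standard thick core (the strip $[0,1] \times D^{2n-1}$ glued to $\partial M$ through the collar chart exactly undoes the connected-sum construction), there are canonical homeomorphisms $M \setminus \mathrm{int}(H) \cong W$ and $M' \setminus \mathrm{int}(H') \cong W'$ that respect the boundary identifications with $S^{2n-1}$. Restricting $\chi$ to these complements therefore yields a homeomorphism $\bar\chi : W \to W'$ whose restriction to $\partial W$ is isotopic (though not necessarily equal) to the identity.

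To upgrade $\bar\chi$ to a homeomorphism rel boundary, I absorb the boundary isotopy through a collar $c : [0,1] \times \partial W' \hookrightarrow W'$. Choose an isotopy $\alpha_s$ of $S^{2n-1} \cong \partial W'$ with $\alpha_0 = \bar\chi|_{\partial W}^{-1}$ and $\alpha_1 = \mathrm{id}$; then the self-homeomorphism $\Lambda$ of $W'$ defined by $\Lambda(c(s,x)) = c(s,\alpha_s(x))$ on the collar and $\Lambda = \mathrm{id}$ outside the collar satisfies $\Lambda|_{\partial W'} = \bar\chi|_{\partial W}^{-1}$, so $\Lambda \circ \bar\chi : W \to W'$ is the desired homeomorphism rel boundary. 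The main technical obstacle is verifying the canonical identification $M \setminus \mathrm{int}(H) \cong W$ rel boundary used in the middle step, which amounts to a careful unpacking of Definition~\ref{def.cores} and the way the standard thick core sits in the collar chart; once this is in hand, the rest of the argument is a direct transcription of the smooth proof.
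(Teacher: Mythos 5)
Your argument is correct and is essentially the paper's proof: the paper pulls the standard thick core of $W'\#(S^n\times S^n)$ back along the inverse homeomorphism and applies Lemma~\ref{lem.transitivity} in $W\#(S^n\times S^n)$, whereas you push the core forward and apply transitivity in $W'\#(S^n\times S^n)$ — a mirror image of the same argument using the same key lemma. Your explicit absorption of the boundary isotopy into a collar, and the identification of the complement of the thick core with $W$, are exactly the details the paper leaves implicit in the phrase ``hence their complements are homeomorphic rel boundary.''
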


\begin{proof}After picking arcs connecting $(S^n \times S^n) \backslash \mr{int}(D^{2n})$ to the boundary as in Definition \ref{def.k}, we get a $0$-simplex $(t_0,\varphi_0)$ in $K^\delta_\circ(W \# (S^n \times S^n))$ and a $0$-simplex $(t_0',\varphi_0')$ in $K^\delta_\circ(W' \# (S^n \times S^n))$. Applying the inverse of the homeomorphism to $(t_0',\varphi_0')$ we get a second $0$-simplex $(t_1,\varphi_1)$ of $K^\delta_\circ(W \# (S^n \times S^n))$. Since $g(W \# (S^n \times S^n)) \geq 4$, Lemma \ref{lem.transitivity} says there is a homeomorphism of $W \# (S^n \times S^n)$ that is isotopic to the identity on the boundary taking $(t_0,\varphi_0)$ to $(t_1,\varphi_1)$. Hence their complements are homeomorphic rel boundary.\end{proof}

Now we are getting closer to proof of homological stability, we return to the context of Theorem \ref{thm.main}. That is, we fix a compact $1$-connected topological manifold $M$ of dimension $2n \geq 6$ with boundary $S^{2n-1}$ and define 
\[M_{\# g} = M \#_g (S^n \times S^n)\]

It is clear from this definition that $g(M_{\# g}) \geq g(M) + g$. To identify the $E^1$-page, we define $N_q = \bigcup_{i=0}^q (W^\epsilon_{2,1}+i\cdot e_1)$ and make a choice of $q$-simplex $\sigma_q \in K_q(N_q)$ of thick cores so that the inclusion $(S^{2n-1} \times (-\epsilon,0]) \cup \mr{im}(\sigma_q) \hookrightarrow N_q$ is an isotopy equivalence. See Figure \ref{fig.nsigma} for an example.

\begin{definition} Given $(N_q,\sigma_q)$, the map $\iota^\sigma_q: \cM(M_{\# g-q-1})  \to X_q(M_{\# g})$ is given by
\begin{align*}\mr{Emb}^\mr{lf}_\partial(M_{\# g-q-1},\bR^\infty) \times_{\Top_\partial(M_{\# g-q-1})}* &\to \mr{Emb}^\mr{lf}_\partial(M_{\# g},\bR^\infty) \times_{\Top_\partial(M_{\# g})} K^s_q(M_{\# g})\\
(X,*) &\mapsto ((X+(q+1) \cdot e_1) \cup N_q,\sigma_q)\end{align*}
\end{definition}

\begin{proposition}\label{prop.rese1} The map $\iota^\sigma_q : \cM(M_{\# g-q-1}) \to X_q(M_{\# g})$ is a weak equivalence for $0 \leq q \leq g-4$.\end{proposition}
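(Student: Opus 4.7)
The plan is to compute both $\cM(M_{\# g-q-1})$ and $X_q(M_{\# g})$ as classifying spaces of related simplicial groups, and identify $\iota^\sigma_q$ with the resulting composite equivalence. Combining Lemma \ref{lem.embcontract} with Lemma \ref{lem.kanfibration}(i), the weak contractibility of $\mr{Emb}^\mr{lf}_\partial(M_{\# g}, \bR^\infty)$ together with the free levelwise $\Top_\partial(M_{\# g})$-action identifies $X_q(M_{\# g})$ with the homotopy orbit $K_q(M_{\# g})_{h\Top_\partial(M_{\# g})}$; similarly, $\cM(M_{\# g-q-1}) \simeq B\Top_\partial(M_{\# g-q-1})$. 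Under these identifications, $\iota^\sigma_q$ factors as the composite
\[B\Top_\partial(M_{\# g-q-1}) \to B\mr{Stab}(\bar\sigma_q) \to K_q(M_{\# g})_{h\Top_\partial(M_{\# g})},\]
where $\bar\sigma_q \in K_q(M_{\# g})$ is the image of $\sigma_q$ under a chosen base embedding of $M_{\# g}$ containing $N_q$, and $\mr{Stab}(\bar\sigma_q) \subset \Top_\partial(M_{\# g})$ denotes the simplicial subgroup of homeomorphisms fixing $\bar\sigma_q$.

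I would then show that both maps in the composite are weak equivalences. For the second, one proves that the $\Top_\partial(M_{\# g})$-action on $K_q(M_{\# g})$ is transitive in a simplicial sense. At the level of $\pi_0$, transitivity follows by iterating Lemma \ref{lem.transitivity}: after matching the first thick core by a homeomorphism, use Lemma \ref{lem.cancellation} to identify the complement with $M_{\# g-1}$, then match the second core there, and so on. The required inductive hypothesis $g(M_{\# g-i}) \geq 4$ for all $0 \leq i \leq q$ holds precisely when $q \leq g-4$. Promoting this to a simplicial statement uses parametrized isotopy extension (\IE) to propagate a vertex-wise matching of simplices over a parameter $\Delta^k$ to a family of ambient homeomorphisms. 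For the first map, Lemma \ref{lem.cancellation} identifies the complement of $N_q$ in $M_{\# g}$ rel boundary with $M_{\# g-q-1}$, and extension by the identity on $N_q$ defines the inclusion $\Top_\partial(M_{\# g-q-1}) \hookrightarrow \mr{Stab}(\bar\sigma_q)$; a further application of \IE identifies the quotient with a weakly contractible space of germs of embeddings, so the inclusion is a weak equivalence.

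The main obstacle is the simplicial upgrade of transitivity. Translating a $k$-simplex of $K_q(M_{\# g})$ to the constant simplex $\bar\sigma_q$ by a $k$-simplex of $\Top_\partial(M_{\# g})$ requires an induction on $q$ in which, at each stage, one applies Lemma \ref{lem.transitivity} in the parametric family of complements produced by Lemma \ref{lem.cancellation} and then invokes parametrized isotopy extension (\IE) to extend the matching to the ambient manifold. This is the step that genuinely requires topological-manifold input beyond the cancellation and transitivity lemmas, and it parallels the corresponding step in the argument of Galatius and Randal-Williams \cite{grwstab1}.
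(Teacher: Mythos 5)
Your proposal is essentially the paper's own argument in a different packaging: identifying $X_q(M_{\# g})$ with the homotopy orbits of $K_q(M_{\# g})$, factoring $\iota^\sigma_q$ through the classifying space of the stabilizer of $\sigma_q$, and proving that the action map from $\Top_\partial(M_{\# g})$ is a surjective Kan fibration via cancellation and parametrized isotopy extension is exactly the structure of the proof in the text, where the two steps are carried out at once by comparing the fibration $\Top_\partial(M_{\# g}) \to K^s_q(M_{\# g})$ with $\Top_\partial(M_{\# g-q-1}) \to \ast$ and then passing through the Borel construction of Lemma \ref{lem.kanfibration}.

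There is, however, one step that fails as you state it: the $\Top_\partial(M_{\# g})$-action on $K_q(M_{\# g})$ is \emph{not} transitive in any simplicial sense, because elements of $\Top_\partial$ fix the boundary and are extended by the identity on the external collar, so postcomposition preserves the attachment parameters $t_i$ of Definition \ref{def.k}. The orbit of $\bar\sigma_q$ therefore only meets simplices whose $t$-values are constant and equal to those of $\sigma_q$; in particular a $k$-simplex of $K_q(M_{\# g})$ whose $t_i$ vary over $\Delta^k$ is never of the form $\phi\cdot\bar\sigma_q$, so the orbit map cannot be a surjective Kan fibration onto $K_q(M_{\# g})$. The paper removes this obstruction before invoking \IE: it replaces $K_q(M_{\# g})$ by the subcomplex $K^s_q(M_{\# g})$ of tuples with $t_i=i$, observes that the inclusion is a weak equivalence since the space of maps $\Delta^k\to\bR$ is contractible, and transports this through the Borel construction using Lemma \ref{lem.kanfibration}(iii); with that reduction your transitivity claim (via Lemmas \ref{lem.transitivity} and \ref{lem.cancellation}, needing exactly $q\le g-4$) and the Kan fibration claim via \IE go through. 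One further point: the weak equivalence $\Top_\partial(M_{\# g-q-1})\hookrightarrow \mr{Stab}(\bar\sigma_q)$ is not a consequence of isotopy extension alone, but rests on the specific choice of $\sigma_q$, namely that $(S^{2n-1}\times(-\epsilon,0])\cup\mr{im}(\sigma_q)\hookrightarrow N_q$ is an isotopy equivalence; your ``weakly contractible space of germs of embeddings'' should be justified from that property, as in the paper.
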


\begin{proof}We fix $q$ and identify the simplicial set 
\[X_q(M_{\# g}) = \mr{Emb}^\mr{lf}_\partial(M_{\# g},\bR^\infty) \times_{\Top_\partial(M_{\# g})} K_q(M_{\# g})\]
Let $K^s_q(M_{\# g}) \subset K_q(M_{\# g})$ be the simplicial subset of those $(q+1)$-tuples of the form $((0,\varphi_0),\ldots,(q,\varphi_q))$, i.e. we restrict how the thick cores attach to the boundary in the germ of collar chart. Since the space of maps $\Delta^k \to \bR$ is contractible, it is easy to see that the inclusion $K^s_q(M_{\# g}) \hookrightarrow K_q(M_{\# g})$ is a weak equivalence. Thus by part (iii) of Lemma \ref{lem.kanfibration} we can replace $K_q(M_{\# g})$ with $K^s_q(M_{\# g})$ in this definition. This makes it easier to apply isotopy extension later.

\begin{figure}
 \centering{
 \resizebox{125mm}{!}{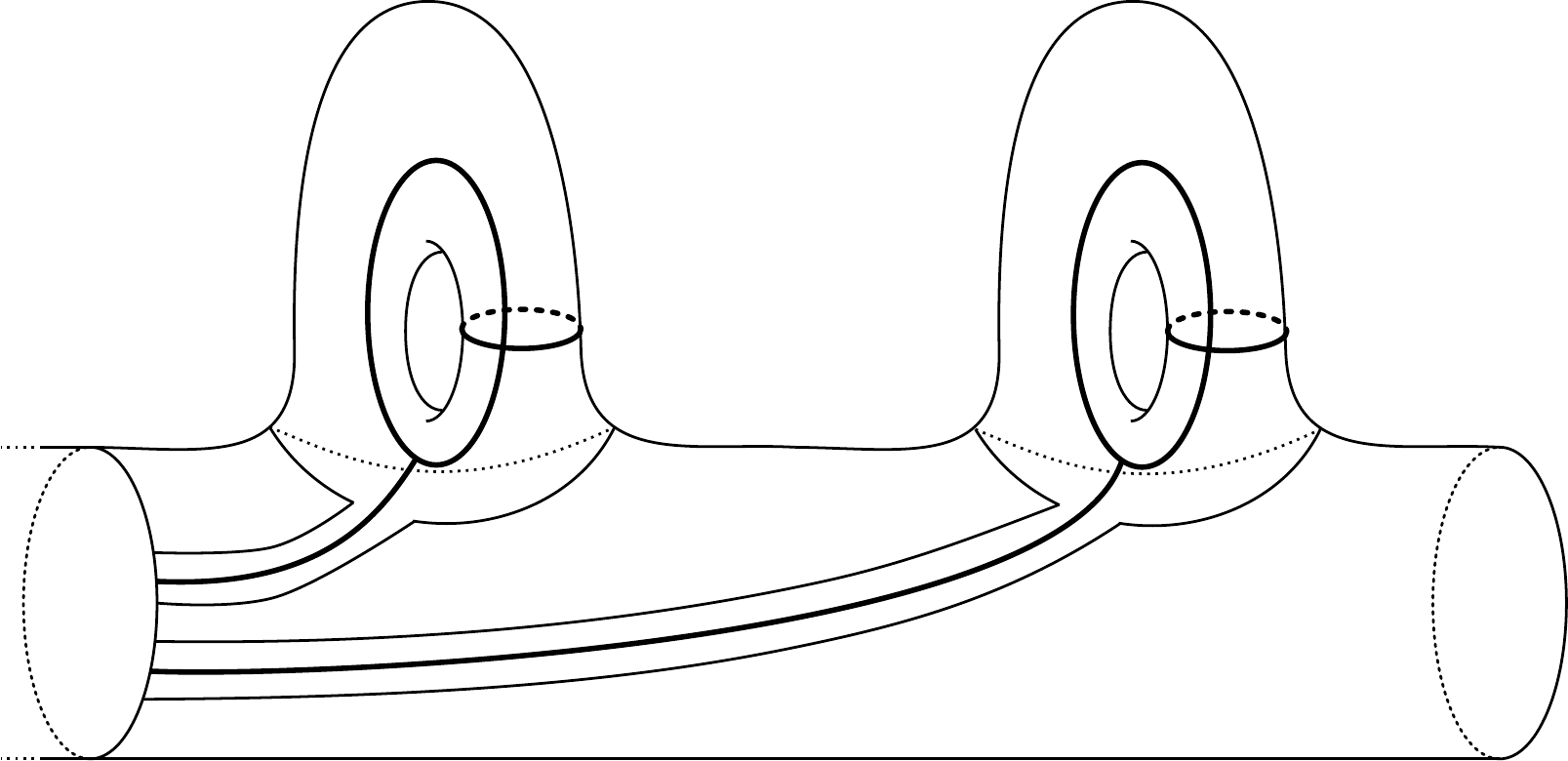}
 \caption{An example of $(N_1,\sigma_1)$ as in the proof of Proposition \ref{prop.rese1}.}
 \label{fig.nsigma}
}  \end{figure}

Consider the commutative diagram
\[\xymatrix{\Top_\partial(M_{\# g-q-1}) \ar[r]_-{\cup N_q} \ar@{->>}[d] & \Top_\partial(M_{\# g}) \ar@{->>}[d]  \\
\ast \ar[r]_-{\sigma_q} & K^s_q(M_{\# g})}\]
where the top map is obtained by extending homeomorphisms by the identity on $N_q$ and right map $\Top_\partial(M_{\# g}) \to  K^s_q(M_{\# g})$ is obtained by acting on $\sigma_q$. If all complements of $q$-simplices are homeomorphic to the complement $M_{\#g-q-1}$ of $\sigma_q$, then this map is surjective. By Lemma \ref{lem.cancellation}, this is in particular the case if $g \geq 4+q$. 

By isotopy extenson (\IE) relative to the boundary, the map $\Top_\partial(M_{\# g}) \to K^s_q(M_{\# g})$ a Kan fibration. Its fiber over $\sigma_q$ consists of those homeomorphisms of $M_{\# g}$ that are the identity on both the boundary and the image of $\sigma_q$. By our choice of $\sigma_q$, the induced map from the fiber of the left map, i.e. $\Top_\partial(M_{\# g-q-1})$, to the fiber of the right map is weakly equivalent to the identity.

Taking the product with embeddings over homeomorphisms, we get a commutative diagram
\[\xymatrix{\mr{Emb}^\mr{lf}_\partial(M_{\# g-q-1},\bR^\infty) \underset{\Top_\partial(M_{\# g-q-1})}{\times} \Top_\partial(M_{\# g-q-1}) \ar[r] \ar@{->>}[d] & \mr{Emb}^\mr{lf}_\partial(M_{\# g},\bR^\infty) \underset{\Top_\partial(M_{\# g})}{\times} \Top_\partial(M_{\# g}) \ar@{->>}[d]  \\
\mr{Emb}^\mr{lf}_\partial(M_{\# g-q-1},\bR^\infty)\underset{\Top_\partial(M_{\# g-q-1})}{\times} \ast \ar[r]_-{\iota^\sigma_q} & \mr{Emb}^\mr{lf}_\partial(M_{\# g},\bR^\infty) \underset{\Top_\partial(M_{\# g})}{\times} K^s_q(M_{\# g})}\]
and by Lemma \ref{lem.kanfibration} the vertical maps are still fibrations. The fiber of the left map is still $\Top_\partial(M_{\# g-q-1})$ and the map on fibers still is weak equivalent to the identity. Furthermore, the total spaces are contractible and since the original vertical maps were surjective, the bases are path-connected. Thus $\iota^\sigma_q$ is a weak equivalence.\end{proof}

Now that we have identified the entries on the $E^1$-page, we continue with the $d^1$-differential. It is important to note that the map of the previous proposition involved a choice of $N_q$ and $\sigma_q$. The next proposition will require picking them more carefully.

\begin{proposition}
\label{prop.resd1}
We can pick $(N_q,\sigma_q)$ so that resolution $X_\bullet(M_{\# g})$ has the following properties:
\begin{enumerate}[(i)]
\item For all $q$ the following diagram commutes
\[\xymatrix{\cM(M_{\# g-q-1}) \ar[d]_t \ar[r] & X_q(M_{\# g}) \ar[d]^{d_q}  \\ 
\cM(M_{\# g-q}) \ar[r] & X_{q-1}(M_{\# g})}\]
\item The map $\epsilon: X_0(M_{\# g}) \simeq \cM(M_{\# g-1}) \to \cM(M_{\# g})$ is homotopic to $t$.
\item For all $q$ and $0 \leq i \leq q-1$, $d_i$ is homotopic to $d_q$.
\end{enumerate}
\end{proposition}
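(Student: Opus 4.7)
I would choose $(N_q, \sigma_q)$ inductively. Take $N_0 := W^\epsilon_{1,2}$, and let $\sigma_0$ be a standard thick core embedded in $N_0$, attached to the collar chart at height $t_0 = 0$. Inductively set $N_q := N_{q-1} \cup (W^\epsilon_{1,2} + q \cdot e_1)$ and $\sigma_q := (\sigma_{q-1}, \sigma_0 + q \cdot e_1)$, so that $d_q \sigma_q = \sigma_{q-1}$ and $N_q$ is the glueing of $N_{q-1}$ with the new translate along the boundary strip.

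Properties (i) and (ii) will follow directly from this compatible choice. For (i), both compositions around the square yield the pair $((X+(q+1)e_1) \cup N_q,\, \sigma_{q-1})$ on the nose: the right-then-down path applies $d_q$ to $\sigma_q$, giving $\sigma_{q-1}$, and the down-then-right path unfolds $t(X) = (X+e_1) \cup W^\epsilon_{1,2}$ inside $\iota^\sigma_{q-1}$ and reassembles $(W^\epsilon_{1,2} + q\cdot e_1) \cup N_{q-1} = N_q$. For (ii), the composite $\epsilon \circ \iota^\sigma_0$ sends $X$ to $(X+e_1) \cup N_0 = (X+e_1) \cup W^\epsilon_{1,2}$, which is exactly $t(X)$ by Definition \ref{def.stab}.

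The substantive work is property (iii). Since $\iota^\sigma_q$ is a weak equivalence in the range $0 \leq q \leq g-4$ by Proposition \ref{prop.rese1}, it suffices to homotope $d_i \circ \iota^\sigma_q$ to $d_q \circ \iota^\sigma_q$ as maps $\cM(M_{\# g - q - 1}) \to X_{q-1}(M_{\# g})$. These have the same embedding component $(X+(q+1)e_1) \cup N_q$ but different core configurations: $d_i \sigma_q$ has cores in copies $0, \ldots, \widehat{i}, \ldots, q$ of $W^\epsilon_{1,2}$ (with $t$-values skipping $i$), while $d_q \sigma_q = \sigma_{q-1}$ uses copies $0, \ldots, q-1$. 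I would build a $1$-simplex $\tilde{\sigma} \colon \Delta^1 \to K^s_{q-1}(M_{\# g})$ interpolating them by (a) linearly sliding the $t$-values from $(0, \ldots, i-1, i+1, \ldots, q)$ to $(0, 1, \ldots, q-1)$ while keeping them strictly ordered (the obvious piecewise-linear interpolation does the job), and (b) supporting this by an ambient isotopy of $M_{\#g}$, supplied by parametrised isotopy extension \IE, that slides the thick core in copy $j+1$ to copy $j$ for each $j \geq i$. Since this ambient isotopy can be chosen supported in a neighbourhood of $N_q$ and disjoint from $X$, the resulting $1$-simplex, constant in the embedding component, provides the required homotopy $d_i \circ \iota^\sigma_q \simeq d_q \circ \iota^\sigma_q$.

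\textbf{Expected main obstacle.} The delicate point in (iii) is realising the sliding isotopy inside $K^s_{q-1}$, i.e.\ keeping the $q$-tuple of thick cores pairwise disjoint and with attaching strips in standard position at the collar throughout the slide. I expect to handle this by first retracting each of the cores in copies $i+1, \ldots, q$ slightly along its attaching strip into a thin collar neighbourhood, then shifting them down one slot using the $2n$-dimensional room available in $N_q$ around the $n$-dimensional spine of each core (so crossings of the $n$-dimensional parts can be avoided when $2n \geq 6$), and finally re-extending; \IE and the genus bound $g(M_{\# g}) \geq g$ supply the needed ambient isotopy. The rest of the argument is bookkeeping.
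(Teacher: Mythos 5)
Your inductive choice of $(N_q,\sigma_q)$ is exactly the right one, and your treatment of (i) and (ii) is correct and matches the paper's: both reduce to the on-the-nose identities $d_q\circ\iota^\sigma_q = \iota^{\sigma_{q-1}}_{q-1}\circ t$ and $\epsilon\circ\iota^\sigma_0 = t$, which your translate construction $\sigma_q=(\sigma_{q-1},\sigma_0+q\cdot e_1)$ gives directly.

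Part (iii), however, has a genuine gap, and the problem is not the one you flag. The sliding isotopy you propose cannot exist for homological reasons, independent of how cleverly one shrinks and re-expands the thick cores. With your choice of $\sigma_q$, the spine $S^n\vee S^n$ of the $j$-th thick core represents the $j$-th hyperbolic summand of $H_n(N_q)\hookrightarrow H_n(M_{\#g})$. Hence $d_i\sigma_q$ has cores representing summands $0,\dots,\widehat{i},\dots,q$ while $d_q\sigma_q=\sigma_{q-1}$ has cores representing summands $0,\dots,q-1$; for $i<q$ these are different sets of classes. Any path in $K_{q-1}(M_{\#g})$ (equivalently, any ambient isotopy supplied by \IE\ supported near $N_q$, keeping the embedding component fixed) preserves the homology class of each core, so no such path from $d_i\sigma_q$ to $\sigma_{q-1}$ exists. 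The step ``slide the thick core in copy $j+1$ to copy $j$'' is therefore impossible.

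The paper circumvents this by \emph{not} trying to produce a homotopy with fixed embedding component. Instead it observes that the map $\iota^\sigma_q$ extends over the moduli space $Y_q(q)=\mr{Emb}^\mr{lf}_\partial(W_{q+1,2},[0,q+1]\times\bR^\infty)\times_{\Top_\partial(W_{q+1,2})}K_q(W_{q+1,2})$ of abstract choices of $(N_q,\sigma_q)$, and reduces (iii) to showing $d_i(N_q,\sigma_q)$ and $d_{i+1}(N_q,\sigma_q)$ lie in the same path component of $Y_{q-1}(q)$. Because this is a path component in a Borel quotient by $\Top_\partial(W_{q+1,2})$, the two configurations need only lie in the same \emph{orbit} up to isotopy, which allows one to invoke a homeomorphism of $W_{q+1,2}$ fixing the boundary but \emph{not} isotopic to the identity — one that permutes the $i$-th and $(i+1)$-th $S^n\times S^n$ summands and hence acts non-trivially on $H_n$. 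This is exactly what GRW's Lemma 6.14 supplies (as a smooth example, which can be used here since smooth manifolds are topological manifolds). So the missing ingredient in your argument is this handle-swapping automorphism; one cannot get by with an ambient isotopy alone, no matter how much room $2n\geq 6$ provides.
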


\begin{proof}As in pages 21--24 of \cite{grwstab1}, for parts (i) and (ii), one can choose $\sigma_q$ so that $d_q \circ \iota^\sigma_q = \iota^{d_q(\sigma)}_{q-1} \circ t$ and $\epsilon \circ \iota_0^\sigma = t$.

For part (iii), we let $W_{p+1,2} := (S^{2n-1} \times [0,1])\# (p+1)(S^n \times S^n)$. There is an augmented semisimplicial simplicial set $Y_\bullet(p)$ given by $\mr{Emb}^\mr{lf}_\partial(W_{p+1,2},[0,q+1] \times \bR^\infty) \times_{\Top_\partial(W_{p+1,2})} K_\bullet(W_{p+1,2})$ and a map
\[Y_q(q) \times \cM(M_{\# g-q-1}) \to  \cM(M_{\# g})\]
such that restricting to $(N_q,\sigma_q) \in Y_q(q)$ we get the map above. Any other $(N'_q,\sigma'_q)$ in the same path component of $Y_q(q)$ induces the same map. We can assume $N'_q = N_q$, since all embeddings of $W_{q+1,2}$ in $\bR^\infty$ are isotopic by Lemma \ref{lem.embcontract}. In Lemma 6.14 of \cite{grwstab1}, Galatius and Randal-Williams construct a smooth example of $(N_q,\sigma_q)$ such that $d_i(N_q,\sigma_q)$ and $d_{i+1}(N_q,\sigma_q)$ are in the same path component. If we use their example, we get $d_i \sim d_q$.
\end{proof}

\subsubsection{Finishing the argument} The main theorem now follows by a standard spectral sequence argument:

\begin{proof}[Proof of Theorem \ref{thm.main}] Recall our goal is to prove that 
\[t_*: H_*\left(\cM(M_{\# g}) \right) \to H_*\left(B\Top_\partial(\cM(M_{\# g+1})\right)\]
is an isomorphism in the range $* \leq \frac{g-3}{2}$ and a surjection in the range $* \leq \frac{g-1}{2}$, using the geometric realization spectral sequence associated to the resolution $X_\bullet(M_{\# g})$ discussed in the previous propositions. We only give an outline of the argument, since it is well-known (see the proof of Theorem 6.3 of \cite{grwstab1} for details,  \cite{RW} for a nice exposition). 

The augmented geometric realization spectral sequence for $X_\bullet(M_{\# g})$ has $E^1$-page given by
\[E^1_{p,q} = \begin{cases} H_q(\cM(M_{\# g})) & \text{if $p = -1$} \\
H_q(X_p(M_{\# g})) & \text{if $p\geq 0$}\end{cases}\]
and $d^1$-differential $\sum_i (-1)^i (d_i)_*$. It converges to $0$ in the range $p+q \leq  \frac{g-2}{2} $. Using Propositions \ref{prop.rese1} and \ref{prop.resd1}, in the range of interest we can rewrite the $E^1$-page as 
\[E^1_{p,q} = \begin{cases} H_q(\cM(M_{\# g})) & \text{if $p = -1$} \\
H_q(\cM(M_{\# g-p-1})) & \text{if $p\geq 0$}\end{cases}\]
and the $d^1$-differential as alternatively $t_*$ and $0$. Thus the $E^2$-page has columns given by
\[E^2_{p,q} = \begin{cases} \mr{coker}[t_*:H_q(\cM(M_{\# g-p-2})) \to H_q(\cM(M_{\# g-p-1}))] & \text{if $p$ is odd} \\
\mr{ker}[t_*:H_q(\cM(M_{\# g-p-1})) \to H_q(\cM(M_{\# g-p}))] & \text{if $p$ is even}\end{cases}\] 

The proof is then by induction over $g$ of the statement that $t_ *$ is an isomorphism for $* \leq \frac{g-3}{2}$ and a surjection for $* \leq \frac{g-1}{2}$. By the induction hypothesis the $E^2$-page vanishes when $p \geq 1$ and $p+q \leq \frac{g-1}{2}$. This and the fact that the spectral sequence converge to $0$ in the range  $p+q \leq  \frac{g-2}{2} $, are the input to a spectral sequence argument that concludes in the vanishing of the columns $p=-1,0$ in a range, proving the inductive step.
\end{proof}

\subsection{The complex $K_\bullet$ and quadratic modules} The main input for the argument in the previous subsection was Proposition \ref{prop.kconn}, the connectivity of the complex of cores. This is where most of the technology of topological manifolds is used, and where the restriction to dimension $2n \geq 6$ comes in.

In this section $W$ will be a compact $1$-connected topological manifold of dimension $2n \geq 6$ with boundary $S^{2n-1}$. We will follow the Galatius-Randal-Williams argument in proving that $K_\bullet(W)$ is highly-connected via several auxiliary semisimplicial simplicial sets and simplicial complexes. We describe them informally below, and will give precise definitions later.

\begin{itemize}
\item $K^C_\bullet(W)$ is a version of $K_\bullet(W)$ where only the cores are required to be disjoint. 
\item  $K^\delta_\bullet(W)$ is the semisimplicial set of vertices of $K^C_\bullet(W)$.
\item $K^\delta_\circ(W)$ is the simplicial complex constructed out of $K^\delta_\bullet(W)$ by forgetting the ordering of the cores,
\item $K^\mr{alg}_\circ(I^\mr{Fr}_n(W),\lambda,\mu)$ is a simplicial complex of orthogonal hyperbolic summands of $n$-quadratic modules.
\end{itemize}

The steps are as follows:
\begin{enumerate}[(1)]
\item Galatius and Randal-Williams proved $K^\mr{alg}_\circ(I^\mr{Fr}_n(W),\lambda,\mu)$ is highly-connected.
\item By a lifting argument applied to a map $K^\delta_\circ(W) \to K^\mr{alg}_\circ(I^\mr{Fr}_n(W),\lambda,\mu)$
the complex $K^\delta_\circ(W)$ is shown to be highly-connected.
\item  The connectivity of $K^\delta_\bullet(W)$ is that of $K^\delta_\circ(W)$, since their realizations are homeomorphic. 
\item That $K^\delta_\bullet(W)$ is highly-connected will be used to show that $K^C_\bullet(W)$ is highly-connected using a microfibration argument.
\item Finally, the inclusion $K_\bullet(W) \hookrightarrow K^C_\bullet(W)$  will be shown to be a weak equivalence.
\end{enumerate}

\subsubsection{Quadratic modules} Our first goal will be to explain how to obtain an $n$-quadratic module from $W$. The essential ideas appear on page 152 of \cite{kirbysiebenmann}, but we will give additional details. Fix an embedding $b_W: D^n \times  \mr{int}(D^n) \hookrightarrow W$ and orientation on $W$ compatible with $b_W$, which exists since $W$ is $1$-connected.

\begin{definition}We define $I^\mr{Fr}_n(W)$ to be the set of pairs $(\phi,\gamma)$ of an element $\phi$ of $\pi_0(\mr{Imm}^\mr{lf}(S^n \times \mr{int}(D^n),W))$ and an equivalence class of regular homotopy $\gamma$ from $\phi|_{D^n \times  \mr{int}(D^n)}$ to $b_W$ (where we think of $D^n \times  \mr{int}(D^n) \subset S^n \times  \mr{int}(D^n)$ as the bottom hemisphere).\end{definition}

The frame bundle $\mr{Fr}(W)$ of a topological manifold is the principal $\mr{TOP}(2n)$-bundle associated to the $\bR^n$-bundle inside the tangent microbundle with structure group $\mr{TOP}(2n)$, which exists and is essentially unique by Kister's theorem (\KT).

\begin{lemma}$I^\mr{Fr}_n(W)$ is in bijection with $\pi_n(\mr{Fr}(W),b_W)$ and thus has the structure of an abelian group.\end{lemma}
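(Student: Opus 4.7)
The plan is to apply immersion theory (\IT) to identify $\pi_0(\mr{Imm}^\mr{lf}(S^n \times \mr{int}(D^n), W))$ with homotopy classes of maps $V \to \mr{Fr}(W)$, where $V := S^n \times \mr{int}(D^n)$, and then account for the extra datum $\gamma$ by upgrading this to pointed homotopy classes, which will give $\pi_n(\mr{Fr}(W), b_W)$.

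First I would observe that $V$ has trivial tangent microbundle: stable parallelizability of $S^n$ gives $TS^n \oplus \epsilon^1 \cong \epsilon^{n+1}$, hence $TV \cong TS^n \oplus \epsilon^n \cong \epsilon^{2n}$. Fix such a trivialization once and for all. Under this choice, a bundle map $\tau_V \to \tau_W$ covering $f \colon V \to W$ is the same datum as a lift of $f$ to $\mr{Fr}(W)$. The embedding $b_W \colon D^n \times \mr{int}(D^n) \hookrightarrow W$ lifts, via its derivative and the chosen trivialization on its source, to a map $D^n \times \mr{int}(D^n) \to \mr{Fr}(W)$; since the source is contractible this map picks out a well-defined basepoint $b_W \in \mr{Fr}(W)$, reconciling the two uses of the notation $b_W$.

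Next, immersion theory (\IT) for locally flat codimension-zero immersions gives a weak equivalence $\mr{Imm}^\mr{lf}(V, W) \simeq \mr{Bun}(\tau_V, \tau_W) \simeq \mr{Map}(V, \mr{Fr}(W))$, and this is compatible with restriction along the submanifold inclusion $D^n \times \mr{int}(D^n) \hookrightarrow V$, yielding a weak equivalence of the corresponding restriction maps. The datum $(\phi, \gamma)$ of an element of $I^\mr{Fr}_n(W)$ is precisely a connected component of the homotopy fiber of
\[\mr{Imm}^\mr{lf}(V, W) \longrightarrow \mr{Imm}^\mr{lf}(D^n \times \mr{int}(D^n), W)\]
over $b_W$, which by the above corresponds to a connected component of the homotopy fiber of the restriction map $\mr{Map}(V, \mr{Fr}(W)) \to \mr{Map}(D^n \times \mr{int}(D^n), \mr{Fr}(W))$ over the basepoint.

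Finally, since $D^n \times \mr{int}(D^n)$ is contractible, the target of this restriction collapses to $\mr{Fr}(W)$ by evaluation at a point, so the homotopy fiber over $b_W$ is the pointed mapping space $\mr{Map}_*(V, \mr{Fr}(W))$. The inclusion $S^n = S^n \times \{0\} \hookrightarrow V$ is a pointed homotopy equivalence sending the lower hemisphere into $D^n \times \{0\}$, so after collapsing the contractible lower hemisphere in $S^n$ we obtain $\mr{Map}_*(V, \mr{Fr}(W)) \simeq \mr{Map}_*(S^n, \mr{Fr}(W)) = \Omega^n \mr{Fr}(W)$, whose set of components is $\pi_n(\mr{Fr}(W), b_W)$, abelian because $n \geq 3$. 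The main obstacle is invoking immersion theory in the relative/pointed form indicated above, together with the bookkeeping translating a regular homotopy $\gamma$ of immersions into a path of bundle maps; once (\IT) is available in families, the rest is formal homotopy theory.
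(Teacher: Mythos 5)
Your argument is correct and is essentially the paper's own proof: the paper likewise deduces the lemma from immersion theory (\IT) together with the identification of $\mr{Bun}(TV,TW)$ with maps to the frame bundle $\mr{Fr}(W)$ via Kister's theorem (Proposition \ref{prop.sectident}), and your write-up simply makes explicit the trivialization of $TV$, the homotopy-fiber formulation of the pair $(\phi,\gamma)$, and the reduction to $\Omega^n\mr{Fr}(W)$ that the paper leaves implicit.
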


\begin{proof}This is a consequence of immersion theory (\IT) in the guise of Theorem \ref{thm.it} and Proposition \ref{prop.sectident}, which depends on Kister's theorem (\KT).\end{proof}

Geometrically, one can identify this abelian group structure by connected sum at the base point using $\phi|_{D^n \times  \mr{int}(D^n)}$. We now define a pairing $\lambda$ and a map $\mu$ on $I^\mr{Fr}_n(W)$. This uses the homomorphism $I^\mr{Fr}_n(W) \to H_n(W)$ sending an immersed sphere to the image of its fundamental class.

\begin{definition}Define a pairing $\lambda: I^\mr{Fr}_n(W) \otimes I^\mr{Fr}_n(W) \to \bZ$ as the composite of $I^\mr{Fr}_n(W) \to H_n(W)$ and the intersection pairing.\end{definition}

\begin{lemma}\label{lem.lmwelldefined} Every framed immersion of a sphere is regularly homotopic to one with self-transverse core $S^n \times 0$. The function \[\mu: I^\mr{Fr}_n(W) \to \begin{cases}\bZ & \text{if $n$ is even} \\
\bZ/2\bZ & \text{if $n$ is odd} \end{cases}\]
which counts (signed) self-intersections of a representative with self-transverse core is well-defined.\end{lemma}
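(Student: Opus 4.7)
The plan is to handle the two assertions separately. For the existence of a self-transverse representative, I would start from any representative $\phi: S^n \times \mr{int}(D^n) \to W$ of $(\phi,\gamma)$ and deform its core $c \coloneqq \phi|_{S^n \times 0}: S^n \looparrowright W$ so that it becomes self-transverse. The key point is that $c$ inherits an honest normal microbundle from $\phi$ itself (the product microbundle on $S^n \times \mr{int}(D^n)$), so microbundle transversality (\MT) applies. Concretely, cover $S^n$ by finitely many charts $U_1,\dots,U_k$ whose images under $c$ lie inside locally flat chart neighborhoods in $W$, and inductively deform $c|_{U_i}$ (rel a neighborhood of $\partial U_i$) through the normal microbundle coordinates of $\phi$ to make it transverse to $c|_{U_1 \cup \cdots \cup U_{i-1}}$ off the source-diagonal. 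Kister's theorem (\KT) ensures the normal microbundle is essentially unique, and isotopy extension (\IE) in the total space of that microbundle promotes this isotopy of $c$ to a regular homotopy of $\phi$. By compactness, the resulting self-transverse core has only finitely many double points.

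For well-definedness of $\mu$, consider two self-transverse representatives $(\phi_0,\gamma_0)$ and $(\phi_1,\gamma_1)$ of the same class in $I^\mr{Fr}_n(W)$. By definition of $I^\mr{Fr}_n(W)$ they are connected by a regular homotopy $\Phi: S^n \times \mr{int}(D^n) \times [0,1] \to W$ whose restriction to the distinguished hemisphere $D^n \times \mr{int}(D^n)$ realizes the equivalence of the $\gamma_i$. The plan is to put $\Phi$ in general position so that the map of $(n{+}1)$-manifolds $\Phi|_{S^n \times 0 \times [0,1]}$ is self-transverse in $W$; again this is \MT applied to the sheets, using the normal microbundle structure carried by $\Phi$. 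The locus of double points is then a locally flat $1$-manifold $\Sigma \subset (S^n \times [0,1])^2/(\bZ/2)$ with $\partial \Sigma$ equal to the double points of $\phi_0$ and $\phi_1$, and generically the projection $\Sigma \to [0,1]$ has only finitely many critical values. Away from these critical values the core is self-transverse, and at each critical value a pair of double points is created or annihilated (a local maximum or minimum of the projection).

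It remains to show each such birth/death preserves $\mu$. When $n$ is odd, the count is reduced modulo $2$ and each birth/death changes the number of double points by exactly $\pm 2$, which is $0 \bmod 2$. When $n$ is even, one must further verify that the two double points born together carry opposite signs. The local picture at a birth/death is a codimension-$0$ topological embedding of a standard smooth model of two sheets passing through a tangency; pulling this local model into a Euclidean chart of $W$ (provided by \KT and by \MT applied to a small neighborhood) reduces the sign computation to the classical smooth case, where the two newborn intersection points are visibly of opposite sign. The main obstacle I expect is precisely this last step: the framings $\gamma_i$ have to be tracked through the parametric family $\Phi$ in the topological setting, where one works with microbundles rather than tangent bundles, and one must ensure the local model matches the framing data up to homotopy. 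Once the local model is in a Euclidean chart, however, microbundles restrict to genuine vector bundles and the Whitney/Smale sign calculation applies verbatim, giving invariance of $\mu$ under regular homotopy through self-transverse representatives and hence well-definedness on $I^\mr{Fr}_n(W)$.
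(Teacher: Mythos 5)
Your approach matches the paper's. For the first statement both you and the paper observe that the core $S^n \times \{0\}$ has a normal microbundle coming from the immersion of $S^n \times \mr{int}(D^n)$ itself, so \MT{} applies; for the second, both take a regular homotopy and apply parametrized microbundle transversality so that the double point locus is a compact $1$-manifold with boundary in $\partial\Delta^1$, whence double points are born and die in pairs (with opposite signs when $n$ is even). One small wording issue: you say you want $\Phi|_{S^n\times 0\times[0,1]}$ to be ``self-transverse in $W$'', but for a map of an $(n{+}1)$-manifold into the $2n$-manifold $W$ the generic double locus would be $2$-dimensional, not $1$-dimensional. What you actually want---and what the paper does---is self-transversality of the graph map $\Delta^1 \times S^n \times \{0\} \to \Delta^1 \times W$ rel $\partial\Delta^1$ (equivalently, transversality fiberwise over the parameter); that is what yields the $1$-manifold you go on to use. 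You also supply somewhat more detail than the paper on the sign computation for $n$ even, which the paper simply asserts.
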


\begin{proof}To prove the first statement, use microbundle transversality (\MT) on the cores $S^n \times \{0\}$ of the immersions, which have normal microbundles because they extend to an immersion of $S^n \times  \mr{int}(D^n)$. 

To prove the second statement, we expand the outline in Appendix C.3 of \cite{kirbysiebenmann}. Take a regular homotopy $\Delta^1 \to \mr{Imm}^\mr{lf}(S^n \times \mr{int}(D^n),W)$ and consider it as a map $\Delta^1 \times S^n \times \mr{int}(D^n) \to \Delta^1 \times W$. By applying microbundle tranversality (\MT) to the map $\Delta^1 \times S^n \times \{0\} \to \Delta^1 \times W$ rel $\partial \Delta^1$ to make it self-transverse, we see that we can assume that the self-intersections of $\Delta^1 \times S^n \times \{0\}$ are given by a number of 1-dimensional manifolds with boundary in $\partial \Delta^1$, i.e. circles and intervals with endpoints in $\partial \Delta^1$. This means that self-intersections get born and die in pairs. For $n$ even we remark this happens with opposite signs.
\end{proof}

The following definition formalizes the properties of $\lambda$ and $\mu$.

\begin{definition}If $n$ is even, let $(\epsilon,\Lambda) = (1,\{0\})$. If $n$ is odd, let $(\epsilon,\Lambda) = (-1,2\bZ)$.  An \emph{$n$-quadratic module} is a triple $(I,\lambda,\mu)$ as follows:
\begin{enumerate}[(i)]
	\item  $\lambda: I \otimes I \to \bZ$ is a bilinear map such that $\lambda(x,y) = \epsilon \lambda(y,x)$,
	\item $\mu: I \to \bZ/\Lambda$ is a map that satisfies (a) $\mu(a \cdot x) \equiv a^2 \mu(x) \pmod{\Lambda}$ for $a \in \bZ$, (b) $\mu(x+y)-\mu(x)-\mu(y) \equiv \lambda(x,y) \pmod{\Lambda}$.
\end{enumerate}\end{definition}

We will now give two examples of $n$-quadratic modules.

\begin{example}Let $I = \bZ^2$, define $\lambda$ by $\lambda(e_1,e_2) = 1$, $\lambda(e_1,e_1) = \lambda(e_2,e_2) = 0$ and $\mu$ by $\mu(e_1) = \mu(e_2) = 0$. This gives $(I,\lambda,\mu)$ a structure of an $n$-quadratic module, called the \emph{hyperbolic module} and denoted by $H$.
\end{example}

\begin{proposition}\label{lem.ifrquad} $(I^\mr{Fr}_n(W),\lambda,\mu)$ is a $n$-quadratic module.
\end{proposition}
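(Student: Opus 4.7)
The plan is to verify each of the three axioms of an $n$-quadratic module in turn, using geometric representatives that are made transverse via microbundle transversality (\MT).

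For the bilinearity of $\lambda$: since $I^\mr{Fr}_n(W) \to H_n(W)$ is a group homomorphism and the intersection pairing on $H_n$ of an oriented $2n$-manifold is bilinear, $\lambda$ is bilinear. The symmetry $\lambda(x,y) = \epsilon\lambda(y,x)$ is the standard $(-1)^n$-symmetry of the middle-dimensional intersection pairing, which equals $1$ when $n$ is even and $-1$ when $n$ is odd, matching the choice of $\epsilon$.

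For the quadratic refinement property $\mu(x+y) - \mu(x) - \mu(y) \equiv \lambda(x,y) \pmod{\Lambda}$: first I would represent $x$ and $y$ by framed immersions $\phi_x, \phi_y$ of $S^n \times \mr{int}(D^n)$ whose cores are self-transverse, then apply \MT relatively to the map $\phi_x \sqcup \phi_y: (S^n \sqcup S^n) \times \{0\} \to W$ (the normal microbundles exist because the immersions extend to immersions of $S^n \times \mr{int}(D^n)$) to also make the two cores mutually transverse. The sum $x+y$ in $I^\mr{Fr}_n(W) \cong \pi_n(\mr{Fr}(W),b_W)$ is realized geometrically by the connected sum of $\phi_x$ and $\phi_y$ along the base framing, performed in a small neighborhood of $b_W$ disjoint from all intersection points. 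Then the self-intersections of the resulting immersion partition into the self-intersections of $\phi_x$, the self-intersections of $\phi_y$, and the two copies of each mutual intersection point of the cores; these contribute $\mu(x)$, $\mu(y)$, and $\lambda(x,y)$ respectively (for $n$ odd, the factor of $2$ from the two copies of mutual intersections is absorbed by working mod $\Lambda = 2\bZ$). For the scaling property $\mu(ax) \equiv a^2 \mu(x) \pmod{\Lambda}$: represent $ax$ by $|a|$ parallel copies of a representative of $x$, suitably translated by small framings and with orientations determined by the sign of $a$. The self-intersections then decompose as $|a|$ sets of self-intersections of a single copy plus $\binom{|a|}{2}$ sets of mutual intersections between distinct copies; this yields $|a|\mu(x) + \binom{|a|}{2}\cdot 2\mu(x) = a^2\mu(x)$ for $n$ even (using $\lambda(x,x) = 2\mu(x)$), and the appropriate congruence mod $2$ for $n$ odd.

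The main obstacle is the topological transversality step: unlike in the smooth setting, we cannot freely put submanifolds in general position, and must instead invoke microbundle transversality (\MT). This is available precisely because we work with framed immersions of $S^n \times \mr{int}(D^n)$, whose cores $S^n \times \{0\}$ come equipped with normal microbundles. The same consideration underlies Lemma \ref{lem.lmwelldefined}, and once transversality is in place the combinatorial bookkeeping of intersection points proceeds exactly as in the smooth case.
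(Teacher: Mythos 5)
Your approach is essentially the same as the paper's: represent classes by framed immersions of $S^n \times \mr{int}(D^n)$, arrange self-transverse and mutually transverse cores via microbundle transversality \MT\ (available because the framings supply the needed normal microbundles), realize sums by connected sum near the base framing, and count double points. Bilinearity and symmetry from the homological intersection pairing, the refinement identity from the connected-sum count, and the scaling law from parallel copies all parallel the paper's proof, and your treatment of the scaling law (spelling out the $\binom{|a|}{2}$ pairwise intersection sets and using $\lambda(x,x) = 2\mu(x)$, which follows geometrically from the two signed crossings a pushoff creates per double point) is if anything a bit more explicit than the paper's.

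There is one slip. In the quadratic-refinement count, each transverse intersection point of the cores of $\phi_x$ and $\phi_y$ becomes exactly \emph{one} double point of the connected sum, not two, provided the connecting tube is chosen to avoid the cores and their intersections. So the mutual intersections contribute $\lambda(x,y)$ on the nose, and there is no factor of $2$ to absorb. Your parenthetical that a factor of $2$ ``is absorbed by working mod $\Lambda = 2\bZ$'' covers only $n$ odd; for $n$ even, $\Lambda = \{0\}$ and the axiom $\mu(x+y) - \mu(x) - \mu(y) = \lambda(x,y)$ is an integer identity, so a stray factor of $2$ would be a genuine error. Contrast with the scaling step, where two \emph{parallel copies of the same core} really do meet in $2\mu(x)$ signed points (two per double point, with equal signs when $n$ is even); that is the correct place for the factor of $2$, and you used it correctly there.
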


\begin{proof}We use a simply-connected version of the proof of Theorem 5.2 \cite{wall}, implicit in the remarks on page 152 of \cite{kirbysiebenmann}. Part (i) is obvious from the homological definition of $\lambda$. Part (ii)(b) follows from the fact that the self-intersections of a connected sum of two transversely intersecting spheres are given by the sum of those of individual spheres and the intersections of the spheres, if one takes the connected sum along a tube that avoids both spheres (which can be arranged by microbundle transversality \MT). Part (ii)(a) follows from the fact that $a \cdot x$ is represented by a connected sum of $a$ translates in $\mr{int}(D^n)$-direction of the same immersion: the self-intersections of this connected sum are the sum of the self-intersections and pairwise intersections. Since translation is a regular homotopy, by Lemma \ref{lem.lmwelldefined}, both the numbers of pairwise intersections and self-intersections is $\mu(x)$.\end{proof}

\begin{example}\label{examp.hquad} For $H$ as in Definition \ref{def.cores} and $W_{1,1} = (S^n \times S^n) \backslash \mr{int}(D^{2n})$, there is an inclusion $H \to (I^\mr{Fr}_n(W_{1,1}),\lambda,\mu)$ by sending $e_1$ to the regular homotopy class of a tubular neighborhood of the first sphere and $e_2$ to the regular homotopy class of a tubular neighborhood of the first sphere .\end{example}

We now define a complex of hyperbolic summands of an $n$-quadratic module, following \cite{grwstab1}.

\begin{definition}Let $Q = (I,\lambda,\mu)$ be an $n$-quadratic module. We define a simplicial complex  $K^\mr{alg}_\circ(Q)$ as follows.
\begin{itemize}
\item The vertices injective maps $h_i: H \to Q$ of $n$-quadratic modules.
\item A $(k+1)$-tuple $(h_0,\ldots,h_k)$ forms a $k$-simplex if all $h_i(H)$ are orthogonal.
\end{itemize}\end{definition}

Theorem 3.2 of \cite{grwstab1} proves the following connectivity result for $K^\mr{alg}_\circ(Q)$. Recall that for a simplex $\sigma$ of a simplicial complex $X_\circ$, the star $\mr{Star}_{\sigma}(X_\circ)$ consists of all simplices $\tau$ that have $\sigma$ as a face and the link $\mr{Link}_\sigma(X_\circ)$ is the subcomplex of $\mr{Star}_{\sigma}(X_\circ)$ spanned by vertices not in $\sigma$.

\begin{definition}A simplicial complex $X_\circ$ is said to be \emph{weakly Cohen-Macauley of dimension $\geq n$} if $X_\circ$ is $(n-1)$-connected and for all $p$-simplices $\sigma$, we have that $\mr{Link}_\sigma(X_\circ)$ is $(n-p-2)$-connected.\end{definition}

\begin{theorem}[Galatius-Randal-Williams] \label{thm.connalg} Suppose there is an injective map $H^{\oplus g} \to Q$ of $n$-quadratic modules. Then the complex $K^\mr{alg}_\circ(Q)$ is at least $ \frac{g-4}{2}$-connected. In fact, it is weakly Cohen-Macauley of dimension $\geq  \frac{g-2}{2}$.
\end{theorem}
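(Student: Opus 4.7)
The plan is to prove the stronger weakly Cohen--Macaulay statement by induction on $g$, since that condition is precisely what is needed for the inductive step and formally implies the bare connectivity bound: being weakly Cohen--Macaulay of dimension $\geq (g-2)/2$ means by definition $\bigl(\tfrac{g-4}{2}\bigr)$-connected, plus connectivity conditions on links.

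The structural observation driving the induction is that for a $p$-simplex $\sigma = (h_0,\dots,h_p)$ of $K^\mr{alg}_\circ(Q)$, there is a tautological identification
\[
\mr{Link}_\sigma K^\mr{alg}_\circ(Q) \;\cong\; K^\mr{alg}_\circ\!\bigl((h_0(H)\oplus\cdots\oplus h_p(H))^\perp\bigr),
\]
and an injection $H^{\oplus g}\hookrightarrow Q$ produces $H^{\oplus(g-p-1)}\hookrightarrow \bigl(\bigoplus_i h_i(H)\bigr)^\perp$ by splitting off the unimodular summand $\bigoplus_i h_i(H)$. The induction hypothesis applied to the link then yields $\bigl(\tfrac{g-p-5}{2}\bigr)$-connectivity, which comfortably exceeds the $\bigl(\tfrac{g-2p-6}{2}\bigr)$-connectivity of links required for weak Cohen--Macaulay dimension $\geq (g-2)/2$. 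So as soon as one proves the raw $\bigl(\tfrac{g-4}{2}\bigr)$-connectivity of $K^\mr{alg}_\circ(Q)$ itself in the inductive step, the weak Cohen--Macaulay property closes.

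The real work is in that raw connectivity. My plan is to introduce an auxiliary complex $K^\mr{iso}_\circ(Q)$ whose vertices are nonzero $v\in Q$ with $\lambda(v,v)=0$ and $\mu(v)=0$, and whose simplices are orthogonal tuples, together with a natural forgetful map $K^\mr{alg}_\circ(Q)\to K^\mr{iso}_\circ(Q)$ sending an injection $h \colon H\to Q$ to $h(e_1)$. A Quillen-style ``bad simplex'' argument reduces the connectivity of $K^\mr{alg}_\circ(Q)$ to that of $K^\mr{iso}_\circ(Q)$: given an isotropic $v$, its orthogonal complement inductively still contains a hyperbolic summand, so $v$ can be completed to a hyperbolic pair $(v,w)$ and hence lifted to a vertex of $K^\mr{alg}_\circ(Q)$. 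The connectivity of $K^\mr{iso}_\circ(Q)$ itself is then proved by a separate induction on $g$, modeled on Charney's classical theorem on complexes of unimodular sequences, using that $H^{\oplus g}\hookrightarrow Q$ supplies many orthogonal hyperbolic summands on which to perform the Charney swapping trick.

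The main obstacle is the raw connectivity input for $K^\mr{iso}_\circ(Q)$. Handling the quadratic refinement $\mu$ (as opposed to just the bilinear form $\lambda$) requires careful bookkeeping in the inductive step, and the comparison map $K^\mr{alg}_\circ\to K^\mr{iso}_\circ$ must be engineered so that the induction on $g$ descends compatibly to both complexes via the link identification above. Granting the Charney-type theorem for $n$-quadratic modules, which is the genuine algebraic novelty in the Galatius--Randal-Williams argument, the remaining steps are formal and yield both the wCM dimension bound and the $\bigl(\tfrac{g-4}{2}\bigr)$-connectivity simultaneously.
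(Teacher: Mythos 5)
Your outline is not checked against a proof in this paper, because the paper does not prove this statement: it is imported verbatim as Theorem 3.2 of \cite{grwstab1}, where it is established via the poset techniques of van der Kallen, Charney and Mirzaii--van der Kallen (connectivity of posets of isotropic and hyperbolic unimodular sequences over rings of finite unitary stable rank, specialized to $\bZ$). Your sketch correctly guesses the general shape of that argument -- pass to an auxiliary complex of isotropic vectors, run a Charney/van der Kallen-style induction, use transitivity statements to lift back to hyperbolic summands -- but as a proof it has a genuine gap: the ``Charney-type theorem for $n$-quadratic modules'' that you grant at the end \emph{is} the substance of the theorem, and the steps you call formal are exactly where the work, and the specific constants $\frac{g-4}{2}$ and $\frac{g-2}{2}$, come from.

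Concretely: (a) the vertices of your $K^\mr{iso}_\circ(Q)$ must be \emph{unimodular} isotropic vectors, not merely nonzero $v$ with $\lambda(v,v)=0=\mu(v)$; for instance $v=2h(e_1)$ satisfies your conditions but admits no $w$ with $\lambda(v,w)=1$, so it cannot be completed to a hyperbolic pair and your comparison map has no hope of being surjective enough for a bad-simplex argument. (b) Even for unimodular isotropic $v$, your justification for lifting -- ``its orthogonal complement still contains a hyperbolic summand, so $v$ can be completed to a hyperbolic pair'' -- is wrong as stated: every vector of $v^{\perp}$ pairs to zero with $v$, so the needed partner $w$ with $\lambda(v,w)=1$ cannot come from there. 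What is actually required is a transitivity/surjectivity statement for the unitary group on isotropic (resp.\ hyperbolic) elements, and over $\bZ$ this is precisely where the unitary-stable-rank hypotheses and the numerical losses enter. (c) The claim that an injection $H^{\oplus g}\to Q$ ``splits off'' to give $H^{\oplus(g-p-1)}\hookrightarrow \bigl(\bigoplus_i h_i(H)\bigr)^{\perp}$ is not formal either: the decomposition $Q\cong \bigl(\bigoplus_i h_i(H)\bigr)\oplus\bigl(\bigoplus_i h_i(H)\bigr)^{\perp}$ is automatic by unimodularity, but the given copy of $H^{\oplus g}$ need not be compatible with it, so this is a Witt-cancellation statement whose proof again goes through the same stable-rank machinery (the fact that the theorem only asserts weak Cohen--Macaulayness of dimension $\geq\frac{g-2}{2}$, rather than what exact cancellation would predict, reflects this). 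Your identification $\mr{Link}_{\sigma}(K^\mr{alg}_\circ(Q))\cong K^\mr{alg}_\circ\bigl(\sigma^{\perp}\bigr)$ is fine, and the arithmetic closing the wCM induction is fine granted (c); but without (a)--(c) and the connectivity of the isotropic complex the argument does not close. If the goal is to use the result, the paper's route -- citing Theorem 3.2 of \cite{grwstab1} -- is the appropriate one; if the goal is to reprove it, the content you must supply is essentially the Mirzaii--van der Kallen connectivity and transitivity results for quadratic modules over $\bZ$.
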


\subsubsection{Lifting to topology} We have now reached the step where we lift from algebra to topology. This is the only part where $2n \geq 6$ is important. We start by describing the simplicial complex of interest.

\begin{definition}\label{def.kdelta} Fix a germ of collar chart $[\delta,\eta]$. We define a simplicial complex $K^\delta_\circ(W)$ as follows.
\begin{itemize}
\item The vertices given by an equivalence class as $\epsilon \to \infty$ of a pair $(t,\varphi)$ of a $t \in \bR$ and a locally flat embedding 
\[\varphi: H \cup ((-\epsilon,0] \times D^{2n-1}) \hookrightarrow W  \cup ((-\epsilon,0] \times S^{2n-1})\]
which satisfies $\varphi(s,r) = (s,r+te_2)$ for $(s,r)$ in the coordinates $(-\delta,\delta) \times D^{2n-1}$ of a representative of the germ of collar chart.
\item A $(q+1)$-tuple $((t_0,\varphi_0),\ldots,(t_q,\varphi_q))$ forms a $q$-simplex if the images of $C$ under the $\varphi_i$ are disjoint.
\end{itemize} 

\end{definition}

Lemma \ref{lem.ifrquad} implies that there is a map of simplicial complexes 
\[\pi \colon K^\delta_\circ(W) \to K^\mr{alg}_\circ(I^\mr{Fr}_n(W),\lambda,\mu)\]

We will use this map and Theorem \ref{thm.connalg} to prove that $K^\delta_\circ(W)$ is highly-connected, analogous to the lifting argument in Section 5 of \cite{grwstab1}. The proof is a bit more complicated than in \cite{grwstab1} due to transversality not being an open condition for topological manifolds. 

\begin{proposition}\label{prop.kdelta} We have that $K^\delta_\circ(W)$ is $ \frac{g(W)-4}{2}$-connected. In fact, it is weakly Cohen-Macauley of dimension $\geq \frac{g(W)-2}{2}$.
\end{proposition}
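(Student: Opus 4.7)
Our plan is to lift from algebra to topology along $\pi\colon K^\delta_\circ(W) \to K^\mr{alg}_\circ(I^\mr{Fr}_n(W),\lambda,\mu)$, following the strategy of Section~5 of \cite{grwstab1}. Since $g(W)$ disjoint copies of $(S^n\times S^n)\setminus\mr{int}(D^{2n})$ embed in $W$, taking regular homotopy classes of tubular neighborhoods of their two core spheres produces an injective map $H^{\oplus g(W)}\to(I^\mr{Fr}_n(W),\lambda,\mu)$, so Theorem~\ref{thm.connalg} shows that $K^\mr{alg}_\circ(I^\mr{Fr}_n(W),\lambda,\mu)$ is weakly Cohen--Macaulay of dimension $\geq\tfrac{g(W)-2}{2}$; in particular the link in $K^\mr{alg}_\circ$ of any $p$-simplex is $\tfrac{g(W)-2p-6}{2}$-connected.

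To prove $K^\delta_\circ(W)$ is $\tfrac{g(W)-4}{2}$-connected, we start with a simplicial map $f\colon\partial D^{i+1}\to K^\delta_\circ(W)$ for $i\leq\tfrac{g(W)-4}{2}$, compose with $\pi$, and use the wCM property of $K^\mr{alg}_\circ$ to extend $\pi\circ f$ to $F\colon D^{i+1}\to K^\mr{alg}_\circ(I^\mr{Fr}_n(W),\lambda,\mu)$. The task is then to lift $F$ rel $\partial$ to $\tilde F\colon D^{i+1}\to K^\delta_\circ(W)$. On a vertex, $F$ is a hyperbolic pair $(x,y)\in I^\mr{Fr}_n(W)$: by \IT in the guise of Proposition~\ref{prop.sectident} it is realised by framed locally flat immersions $S^n\times\mr{int}(D^n)\hookrightarrow W$; microbundle transversality (\MT) puts the core $n$-spheres into self-transverse and mutually transverse position; and since $\mu(x)=\mu(y)=0$ and $\lambda(x,y)=1$ the intersection and self-intersection points come in algebraically cancelling pairs, so the Whitney trick (\WT, requiring $2n\geq 6$) clears all but one, producing an embedded plumbing. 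Thickening (via the tubular neighborhood data encoded in the framed immersion) and attaching a strip through the chosen germ of collar chart $[\delta,\eta]$ yields a vertex of $K^\delta_\circ(W)$ lying over $(x,y)$. For a higher simplex in $K^\mr{alg}_\circ$, orthogonality of its constituent hyperbolic summands makes the same \MT--\WT combination disjoin the corresponding thick cores, producing a higher simplex in $K^\delta_\circ(W)$.

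The main obstacle, and where the argument departs from the smooth case of \cite{grwstab1}, is that microbundle transversality is not an open condition on the space of immersions, so one cannot lift $F$ by a naive simplex-by-simplex construction and hope the pieces agree on overlaps. Instead, we build $\tilde F$ by induction over the skeleta of $D^{i+1}$. Having extended $\tilde F$ across $\partial\Delta^k\subset D^{i+1}$, the plan for the inductive step is to (i) extend the underlying framed immersions over $\Delta^k$ by the parametrised form of \IT; (ii) apply \MT relative to $\partial\Delta^k$ to achieve families of self-transverse and mutually transverse core spheres; (iii) apply \WT parametrically, with the required families of Whitney disks themselves produced by a further parametric application of \MT, to cancel the paired families of intersection points; and (iv) use parametric isotopy extension (\IE) to reconcile the newly-constructed thick cores with those already fixed over $\partial\Delta^k$ and with the standard collar chart. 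Carrying out this same recipe in the complement of the thick cores of a fixed $p$-simplex $\sigma$, using the connectivity of $\mr{Link}_{\pi(\sigma)}(K^\mr{alg}_\circ)$ supplied by Theorem~\ref{thm.connalg} as input, yields the link connectivity that completes the weakly Cohen--Macaulay assertion.
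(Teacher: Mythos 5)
Your overall route—lift along $\pi$, realize hyperbolic summands by framed immersions via \IT, arrange transversality of cores by \MT, cancel intersections by \WT, plumb and attach a strip in the collar chart—is indeed the paper's, but your proposal misses the step that is the actual crux in the topological category: arranging that the cores of the vertices of the \emph{given} boundary map $f\colon S^i\to ||\mr{Link}_\tau(K^\delta_\circ(W))||$ are pairwise transverse. In the smooth setting this is generic and open and can be absorbed into a small perturbation; in $\Top$ it is neither, and your proposed fix—a skeletal induction over $D^{i+1}$ with ``parametric'' applications of \MT, \WT and \IE—does not address it and is in any case misdirected: $K^\delta_\circ(W)$ is a \emph{discrete} simplicial complex, so the lift is a vertex-by-vertex assignment with no families over simplices to which a parametrized Whitney trick could be applied (nor is a parametrized \WT\ among the stated inputs; the genuinely parametrized issues are deferred in the paper to the passage from $K^\delta_\bullet(W)$ to $K^C_\bullet(W)$, where they are handled by a microfibration argument). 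The paper instead arranges transversality of the boundary cores by a separate badness argument (Lemma \ref{lem.transverse}), which replaces bad simplices of $L_\circ$ using the connectivity of links of \emph{higher-dimensional} simplices of $K^\delta_\circ(W)$. This is precisely why the statement proved is the weakly Cohen--Macaulay one, by downward induction on the dimension of $\tau$: the link connectivity is an essential inductive input, not the afterthought it is in your final sentence.

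A second gap: you take the filling $F$ of $\pi\circ f$ over $D^{i+1}$ to be an arbitrary one supplied by the connectivity of $K^\mr{alg}_\circ$, but the vertex-by-vertex geometric lift needs combinatorial control of $F$: each interior vertex must have its star meet $L_\circ$ in a single simplex and must send its link into the link of its image, so that algebraic orthogonality (hence the vanishing of the intersection and self-intersection numbers fed to \WT) is available for exactly those previously lifted cores the new core must be disjoined from. This is what the coloring lemma, Theorem \ref{thm.coloring}, provides, and it is the hypothesis under which the paper's vertexwise lifting (Lemma \ref{lem.lifttransverse}) works; you never invoke it, and without it a new interior vertex may be adjacent to boundary vertices whose classes are not orthogonal to its hyperbolic pair, so the \MT--\WT step of your recipe has no reason to succeed.
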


An important tool for proving this is the following coloring lemma, Theorem 2.4 of \cite{grwstab1}.

\begin{theorem}[Galatius-Randal Williams] \label{thm.coloring} Let $X_\circ$ be a simplicial complex that is weakly Cohen-Macauley of dimension $\geq n$, $f \colon S^{n-1} \to ||X_\circ||$ be a map and $h \colon D^n \to ||X_\circ||$ be a nullhomotopy. If $f$ is simplicial with respect to a PL triangulation  $S^{n-1} \cong ||L_\circ||$, then this triangulation extends to a PL triangulation
$D^n \cong ||K_\circ||$ and $h$ is homotopic rel $S^{n-1}$ to a simplicial map $g \colon ||K_\circ|| \to ||X_\circ||$ such
that
\begin{enumerate}[(i)]
\item for each vertex $v \in K_\circ \backslash L_\circ$, the star of $v$ intersects $L_\circ$ in a single (possibly
empty) simplex, and
\item for each vertex $v \in K_\circ \backslash L_\circ$, $g(\mr{Link}_v(K_\circ)) \subset \mr{Link}_{g(v)}(X_\circ)$.
\end{enumerate}
In particular, $g$ is simplexwise injective if $f$ is.
\end{theorem}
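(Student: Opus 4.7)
My plan is to combine the classical simplicial approximation theorem with iterated local modifications controlled by the weak Cohen-Macaulay hypothesis on $X_\circ$.

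First I would apply simplicial approximation to $h$ relative to $\partial D^n$: after iterated barycentric subdivision we may assume $h$ is itself simplicial, given by a map $g_0 \colon ||K_\circ|| \to ||X_\circ||$ for some triangulation $K_\circ$ of $D^n$ extending $L_\circ$, agreeing with $f$ on $||L_\circ||$ and homotopic to the original $h$ rel $\partial D^n$. The remainder of the argument modifies $(K_\circ, g_0)$ to enforce conditions (i) and (ii). Condition (i) is purely combinatorial: fix a small collar of $\partial D^n$ in $D^n$, stellar-subdivide $K_\circ$ inside the collar so that every new interior vertex in the collar lies in the open star of a single simplex of $L_\circ$, and arrange that no other interior vertex of $K_\circ$ meets the collar. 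The simplicial extension of $g$ over new vertices is forced by the values on their links, and condition (i) holds.

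The essential step is condition (ii). For $g$ simplicial it is equivalent to $g(v) \notin g(\mr{Link}_v(K_\circ))$ for every interior vertex $v$, i.e.\ no edge incident to an interior vertex is collapsed. I would process interior vertices in order of increasing distance from $\partial D^n$. At each bad vertex $v$ the modification replaces $g(v)$ by a new vertex $w \in X_\circ$ subject to the constraint that $\{w\} \cup g(\tau)$ is a simplex of $X_\circ$ for every simplex $\tau$ of $\mr{Link}_v(K_\circ)$. The set of admissible $w$ is a nested intersection of links in $X_\circ$ of simplices of dimension at most $n-1$; by the weak Cohen-Macaulay hypothesis each such link of a $p$-simplex is $(n-p-2)$-connected, in particular non-empty in the relevant dimensional range, so an admissible $w$ exists. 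The replacement is realized as a homotopy rel $\partial D^n$ by introducing a new interior vertex inside the open star of $v$ along a path from $v$ to $w$ and simplicially refining the triangulation accordingly.

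The main obstacle is coordinating the local modifications so that corrections at one vertex do not break conditions already achieved at another, and do not violate (i). I would handle this by keeping the support of each correction inside the open star of the vertex being corrected (so that corrections at distinct vertices have disjoint support after slight shrinking) and by respecting the ordering above, together with the observation that shrinking a correction's support by further subdivision never destroys (i) since no new vertex ever enters the collar. Simplex-wise injectivity of the final $g$ then follows from (ii) combined with the assumption that $f$ is simplex-wise injective on $L_\circ$: a collapsed simplex would have to sit in a closed star of an interior vertex, ruled out by (ii), or lie inside $L_\circ$, ruled out by the hypothesis on $f$.
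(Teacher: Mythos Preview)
The paper does not prove this theorem; it is quoted from \cite{grwstab1} (Theorem 2.4 there) and used as a black box. So there is no in-paper proof to compare against, but your attempt can be measured against the argument of Galatius--Randal-Williams.

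Your step for condition (ii) has a genuine gap. When you try to reassign the value at a bad interior vertex $v$, you need a vertex $w \in X_\circ$ with the property that $\{w\} \cup g(\tau)$ is a simplex of $X_\circ$ for \emph{every} simplex $\tau$ of $\mr{Link}_v(K_\circ)$. This is asking that the entire image subcomplex $g(\mr{Link}_v(K_\circ)) \subset X_\circ$ lie in $\mr{Link}_w(X_\circ)$ for some single vertex $w$. The weak Cohen--Macaulay hypothesis gives high connectivity of the link of each individual simplex $g(\tau)$, but says nothing about the intersection of these links, and in general no such $w$ exists. (Already for $X_\circ$ the boundary of an icosahedron, which is weakly Cohen--Macaulay of dimension $\geq 2$, a path of three edges lies in no single vertex link.) Your sentence ``each such link \ldots\ is $(n-p-2)$-connected, in particular non-empty \ldots\ so an admissible $w$ exists'' conflates non-emptiness of each term with non-emptiness of the intersection.

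The actual argument works with bad \emph{simplices} rather than bad vertices: call $\sigma \subset K_\circ$ bad if $g|_\sigma$ is not injective, and process bad simplices of maximal dimension. For such a $\sigma$ the image $g(\sigma)$ is a \emph{single} simplex of $X_\circ$, and by maximality $g$ sends $\mr{Link}_\sigma(K_\circ) \cong S^{n-\dim\sigma-1}$ into the single link $\mr{Link}_{g(\sigma)}(X_\circ)$, which is $(n-\dim g(\sigma)-2)$-connected by the hypothesis. Since $\dim g(\sigma) < \dim \sigma$ (badness), this sphere bounds a disk $D$ in that link; one replaces $\mr{Star}_\sigma(K_\circ)$ by $\partial\sigma * D$ and extends $g$ accordingly. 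This removes $\sigma$ without creating new bad simplices of the same or higher dimension, and iterating yields (ii). Condition (i) is arranged along the way rather than by a separate collar construction.
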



We start with a result along the lines of Lemma 5.5 of \cite{grwstab1}, with two modifications: we assume transversality of cores, and we additionally prove a result for links of simplices. 

\begin{lemma}\label{lem.lifttransverse} Suppose we are given a (possibly empty) $p$-simplex $\tau$ of $K^\delta_\circ(W)$. If we are given a diagram of simplicial maps
\[\xymatrix{S^i \cong ||L_\circ|| \ar[r]^-f \ar[d] & ||\mr{Link}_{\tau}(K^\delta_\circ(W))|| \ar[d]^\pi \\
D^{i+1} \cong ||K_\circ|| \ar[r]_-g \ar@{.>}[ru] & ||\mr{Link}_{\pi(\tau)}(K^\mr{alg}_\circ(I^\mr{Fr}_n(W),\lambda,\mu))||}\]
such that
\begin{enumerate}[(i)]
\item $g$ satisfies the conditions of Theorem \ref{thm.coloring}, and
\item for all pairs of vertices $k_0,k_1 \in K_\circ \backslash L_\circ$, the cores of $f(k_0)$ and $f(k_1)$ are transverse,
\end{enumerate} then we can find a dotted lift.
\end{lemma}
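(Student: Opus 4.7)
The plan is to build the lift $\tilde g\colon ||K_\circ|| \to ||\mr{Link}_\tau(K^\delta_\circ(W))||$ one vertex at a time. Fix any ordering $v_1,\ldots,v_N$ of the vertices of $K_\circ \backslash L_\circ$; on $L_\circ$ we are forced to set $\tilde g = f$. I would construct $\tilde g(v_j)$ by induction on $j$, maintaining the invariant that whenever a collection of the already-constructed vertices (together with $\tau$) spans a simplex of $K_\circ$ under $g$, the corresponding thick cores have pairwise disjoint images of $C$, so span a simplex of $K^\delta_\circ(W)$ lying in $\mr{Link}_\tau$.

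At the inductive step, Theorem \ref{thm.coloring}(i) ensures that $\mr{Star}_{v_j}(K_\circ)$ meets $L_\circ$ in at most one simplex $\sigma_j$, so the compatibility conditions on $\tilde g(v_j)$ involve only $\tau$, $f(\sigma_j)$, and those $\tilde g(v_k)$ with $k<j$ and $v_k \in \mr{Link}_{v_j}(K_\circ)$. The algebraic vertex $g(v_j)$ is an injective map $h\colon H \hookrightarrow (I^\mr{Fr}_n(W),\lambda,\mu)$ which, by the link condition, is orthogonal to the hyperbolic submodules associated to all of those previously-constructed pieces. Using immersion theory (\IT) together with Proposition \ref{prop.sectident}, I would realize the classes $h(e_1), h(e_2) \in I^\mr{Fr}_n(W)$ by framed immersions $\phi_1,\phi_2\colon S^n \times \mr{int}(D^n) \looparrowright W$, and then apply microbundle transversality (\MT) to make the cores $\phi_i|_{S^n\times\{0\}}$ self-transverse, mutually transverse, and transverse to all the cores of $\tau$, of $f(\sigma_j)$, and of the previously constructed $\tilde g(v_k)$. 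Condition (ii) of the lemma already arranges pairwise transversality among the cores of $f$, so no perturbation of those is required.

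By the definition of the hyperbolic module $H$ together with the orthogonality assertions above, all signed intersection numbers $\lambda$ and self-intersection numbers $\mu$ among $h(e_1), h(e_2)$ and the existing cores vanish, with the sole exception of $\lambda(h(e_1),h(e_2))=1$. Since $2n\geq 6$ and $W$ is $1$-connected, the Whitney trick (\WT) applies, and I would use it to cancel all algebraically trivial intersection points in pairs. What remains is a pair of locally flat embedded $n$-spheres intersecting transversely in a single point, disjoint from every previously constructed core, and carrying trivialisations of their normal microbundles coming from the $\phi_i$. Plumbing along these framings yields a locally flat embedded $(S^n\times S^n)\backslash \mr{int}(D^{2n})$; attaching a strip $[0,1]\times D^{2n-1}$ to the boundary via the germ of collar chart $[\delta,\eta]$ at a height $t_j$ distinct from those already used produces the desired $\tilde g(v_j) \in \mr{Link}_\tau(K^\delta_\circ(W))$ above $g(v_j)$.

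The main obstacle is pushing the transversality and geometric cancellation steps through purely in the topological category: unlike the smooth case, topological transversality of cores is neither automatic nor an open condition, and the Whitney trick is a genuinely deep input. These are supplied by \MT and \WT from Section \ref{sec.input}, and it is the Whitney trick that forces the standing hypothesis $2n\geq 6$. The coloring theorem plays an essential auxiliary role, collapsing the otherwise unmanageable set of compatibility constraints at each new vertex to those coming from a single simplex of $L_\circ$, which is what makes the one-at-a-time inductive construction feasible in the absence of a global perturbation argument.
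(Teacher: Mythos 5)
Your proposal follows the paper's proof in essentially the same way: a vertex-by-vertex inductive lift made manageable by the coloring theorem, realizing the two hyperbolic generators by framed immersions via \IT{} and Proposition \ref{prop.sectident}, arranging transversality to the cores of $\tau$, of $f$, and of previously lifted vertices via \MT{}, cancelling excess intersections with \WT{}, plumbing, and attaching a collar strip at a fresh height. The only points you gloss over, which the paper spells out, are the topological-category bookkeeping: making the plumbing standard near the intersection point via Lemma \ref{lem.localid} (resting on \KT{} and \SH{}), constructing the embedded connecting strip disjoint from all earlier cores using \SH{}, \IT{} and \MT{}, and the final spin-flip adjustment of the framing.
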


\begin{proof}To do this, we put a total order $k_1 \prec k_2 \prec \ldots $ on the vertices in $K_\circ \backslash L_\circ$ and construct a lift of $k_j$ by induction over $j$. By (i), it suffices to find a lift of $g(k_{j+1})$ when we have defined our lift on $g(k_1),\ldots,g(k_j)$, since in the initial case $j=0$ there is nothing to prove. Given $k_j$, we call the cores in the image under the lift of the full subcomplex of $K_\circ$ spanned by $L_\circ$ and $k_1,\ldots,k_{j-1}$, the \emph{previous cores}. During our lift, we will take care that the lift of $k_j$ is transverse to all previous cores.

The vertex $g(k_{j+1})$ is an embedding $H \hookrightarrow (I^\mr{fr}_n(W_\tau),\lambda,\mu)$ orthogonal to the image of $\pi(\tau)$. In particular, we get two regular homotopy classes of immersions $S^n \times  \mr{int}(D^n) \to W$ from this. Pick representatives $e$ and $f$, with cores $S_e$ and $S_f$. Using microbundle transversality (\MT) we can make their cores $S_e$ and $S_f$ 
\begin{enumerate}
\item self-tranverse, 
\item transverse each other, 
\item transverse to the previous cores,
\item transverse to the cores of $\tau$.
\end{enumerate}
Here we use that the previous cores are disjoint from the cores of $\tau$ and already pairwise transverse (this uses (ii)), so that by a preliminary application of microbundle transversality (\MT) we can make $S_e$ and $S_f$ disjoint from these intersections. As a consequence we can assume that the previous cores do not intersect at all, and we are only make $S_e$ and $S_f$ transverse to a single manifold. Each application of microbundle transversality moves a core by an isotopy, which can be extended by the isotopy extension (\IE) to ambient isotopy which moves the entire $S^n \times  \mr{int}(D^n)$.

We will now apply the Whitney trick (\WT) many times, to arrange that 
\begin{enumerate}
\item both $S_e$ and $S_f$ are embedded, 
\item $S_e$ and $S_f$ intersect microbundle transversally in a single point,
\item $S_e$ and $S_f$ are disjoint from any cores in the image under the lift of the intersection of $\mr{Link}(k_{j+1})$ with the full subcomplex spanned by $L_\circ$ and $k_1,\ldots,k_j$,
\item $S_e$ and $S_f$ are disjoint from the cores of $\tau$.
\end{enumerate}

As in illustrative example, we describe how to cancel a pair of locally transverse self-intersections of $S_e$. By the condition on the self-intersection number we can always pair self-intersections in pairs with opposite sign. Since $S^n$ is path-connected, we can find two arcs connecting the two self-intersection points and since $n \geq 3$ by microbundle transversality (\MT) we can assume these are disjoint locally flat embedded arcs. Furthermore, since $S_e$ intersects $S_f$ and the previous cores transversely, these arcs can be made to avoid intersection points with $S_f$ and the previous cores. The two arcs form a Whitney circle $c$. Using Theorem \ref{thm.wt6}, we can assume it has a local model in $W_\tau$ avoiding $S_e$. Here we use that $W_\tau$ is simply-connected since we are removing a self-transverse immersed $n$-dimensional manifold from a $2n$-dimensional manifold and $n \geq 3$. By microbundle transversality (\MT) we can assume the Whitney disk avoids $S_f$ and the previous cores, and by shrinking the model neighborhood we can assume the entire local model avoids $S_f$ and the previous cores. Now we can cancel the intersection points by a move in the local model. The Whitney trick is implemented by a compactly-supported isotopy of one of the two parts of $S_e$ in a local model, so by extending to an ambient isotopy in this local model we can move the entire $S^n \times \mr{int}(D^n)$-neighborhood of $S_e$. 

Note that during this procedure $S_e$ stays transverse to $S_f$, the previous cores and the cores of $\tau$. Furthermore, if $S_e$ was already disjoint from $S_f$, a previous core or a core of $\tau$, it stays disjoint. Similar remarks imply that arranging (b), (c) and (d) does not disrupt the fact that we have arranged the previous conditions. Thus it suffices to remark that by definition of $K^\mr{alg}_\circ$ the conditions of intersection numbers and self-intersection numbers for the Whitney tricks for (b), (c) and (d) are satisfied. 

Now restrict $e$ and $f$ to a $S^n \times D^n \subset S^n \times  \mr{int}(D^n)$, which we will proceed to plumb together to an embedding of $(S^n \times S^n) \backslash  \mr{int}(D^{2n})$. By microbundle transversality, there is a neighborhood $U$ of the intersection point $p$ of $S_e$ and $S_f$ with the following properties:
\begin{itemize}
\item $U$ does not intersect the previous cores or the cores of $\tau$,
\item there is a homeomorphism of \[(U \cap W_{g,1},U \cap S_e,U \cap S_f) \cong (\bR^{2n},\bR^n \times \{(0,\ldots,0)\},\{(0,\ldots,0)\} \times \bR^n)\]
\item $e: e^{-1}(U \cap S_e) \times D^n \to \bR^n \times \bR^n \cong \bR^{2n}$ is given by $(x,y) \mapsto (e(x),y)$,
\item $f: f^{-1}(\{p\}) \times D^n \to \bR^n \times \bR^n \cong \bR^{2n}$ is given by $(0,y) \mapsto (-y,0)$.
\end{itemize} 

We shall work in these coordinates. That is, we have $\bR^n \times D^n \subset \bR^{2n}$ and an embedding $f: \bR^n \times D^n$ such that $f$ restriction to $\bR^n \times \{0\}$ coincides with $(x,y) \mapsto (-y,x)$. To perform the plumbing, we need to show we can find a compactly supported isotopy of $f$ to an embedding which is given by the map $(x,y) \mapsto (-y,x)$ near the origin. By scaling $f$ in the fiber direction, we can assume the image of $\frac{1}{2}D^{2n}$ is contained in $D^{2n} \subset \bR^n \times D^n$ and contains $\frac{1}{4}D^{2n}$. Now apply Lemma \ref{lem.localid} to $g = r \circ f|_{\frac{1}{2}D^{2n}}$, with $r$ the homeomorphism $(x,y) \mapsto (-y,x)$, to get an embedding $\tilde{g}$ which equals the identity on $\frac{1}{4}D^{2n}$ and coincides with $g$ outside $\frac{1}{2}D^{2n}$. Then $f$ is isotopic to $r^{-1} \circ \tilde{g}$, which has the desired properties.

The result is an immersion of $W_{1,1} = (S^n \times S^n) \backslash \mr{int}(D^{2n})$ into $W$, which is an embedding near $S^2\vee S^2$ and such that $S^2\vee S^2$ is disjoint from all the previous cores and the cores of $\tau$. It is not yet connected to the boundary in a standard way. We thus pick an arc from $\partial W_{1,1}$ to a standard piece of arc $(s,t_0 e_2)$ in the coordinates $(-\delta,\delta) \times D^{2n-1}$ of the germ of collar chart, with $t_0$ strictly smaller than the minimum of the set of real numbers $t$ that appear in $g(k)$ for the vertices $k$ in the full subcomplex spanned by $L_\circ$ and $k_1,\ldots,k_j$, or in $\tau$. At the endpoints there are standard $D^{2n-1}$ extending the points. The subgroup of $S\Top(2n)$ of $\Top(2n)$ consisting of orientation-preserving homeomorphisms is path-connected, using the stable homeomorphism theorem (\SH). This allows us extend this path to a path with injective microbundle map. Immersion theory (\IT) gives us an immersed $[0,1] \times D^{2n-1}$ connecting these disks, and an application of microbundle transversality (\MT) makes $[0,1] \times  \{0\}$ embedded and disjoint from the cores, the previous cores and the cores of $\tau$. Shrinking the size of the disks rel $\partial([0,1]) \times D^{2n-1}$ gives the desired $[0,1] \times D^{2n-1}$ extending the immersion of $(S^n \times S^n) \backslash \mr{int}(D^{2n})$ to an immersion of $H$ with embedded core $C$, thus obtaining a vertex of $\mr{Link}_{\tau}(K^\delta_\circ(W))$. There is a minor issue of a possible spin flip (see the end of the proof of Lemma 5.4 of \cite{grwstab1}, but this can be fixed by modifying the framing of the embedding. This is the desired lift of $g(k_{j+1})$ and thus completes the induction step.\end{proof}

\begin{lemma}\label{lem.transverse}  Suppose we are given $W$ and a $p$-simplex $\tau$ of $K^\delta_\circ(W)$. The simplex is allowed to be empty and then $p=-1$. Further suppose that for $p'$-simplices $\tau'$ of $K^\delta_\circ(W)$ satisfying $p'>p$, we have shown that $\mr{Link}_{\tau'}(K^\delta_\circ(W))$ is $(\frac{g-2}{2}-p'-2)$-connected.

Then any map $f: S^i \to ||\mr{Link}_{\tau}(K^\delta_\circ(W))||$ with $i \leq \frac{g(W)-2}{2}-p-2$, is homotopic to a simplicial map $f': S^i \cong ||L_\circ|| \to ||\mr{Link}_{\tau}(K^\delta_\circ(W))||$ such that for all pairs of vertices $\ell_0,\ell_1 \in L_\circ$ the cores of $f'(\ell_0)$ and $f'(\ell_1)$ are transverse.
\end{lemma}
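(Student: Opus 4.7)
The strategy is to simplicially approximate $f$, then modify the representing embeddings using microbundle transversality (\MT) with localized supports, and finally assemble the modifications into a homotopy of simplicial maps. Concretely, since $S^i$ is PL and the target is the geometric realization of a simplicial complex, simplicial approximation yields a PL triangulation $S^i \cong ||L_\circ||$ and a simplicial map $f_0 \colon ||L_\circ|| \to ||\mr{Link}_\tau(K^\delta_\circ(W))||$ homotopic to $f$. Enumerate the vertices of $L_\circ$ as $\ell_1,\ldots,\ell_m$ and represent $v_j := f_0(\ell_j)$ by $(t_j,\varphi_j)$ with core $C_j$. When $\ell_j \sim \ell_k$ in $L_\circ$ the cores are already disjoint and hence transverse, so only non-adjacent pairs require attention; by a preliminary application of \MT, one may also arrange that no three cores meet at a common point, since three $n$-dimensional submanifolds of a $2n$-manifold generically meet in codimension $3n > 2n$.

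\emph{Iterated transversality.} Process $j = 1,\ldots,m$ in order, producing isotopes $\varphi_j'$ of $\varphi_j$ such that the new cores $C_j'$ are pairwise transverse while remaining disjoint from the cores of $\tau$ and from $C_k^*$ whenever $\ell_k \sim \ell_j$ in $L_\circ$ (with $C_k^* = C_k'$ for $k < j$ and $C_k^* = C_k$ otherwise). At step $j$, for each $k < j$ with $\ell_k \not\sim \ell_j$, apply \MT to make the core of $\varphi_j$ transverse to $C_k'$ via an ambient isotopy of $W$ supported in an open neighborhood $U_{j,k}$ of the non-transverse part of $C_j \cap C_k'$. Because triple intersections have been removed, $U_{j,k}$ can be chosen disjoint from every $C_{k'}'$ with $k' \neq k$, from the cores of $\tau$, and from each $C_{k'}^*$ with $\ell_{k'} \sim \ell_j$, so the isotopy disturbs neither previously achieved transversality nor previously established disjointness. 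Iterating over $k$ yields the desired $\varphi_j'$.

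\emph{Assembly of the homotopy.} Each ambient isotopy from $\varphi_j$ to $\varphi_j'$ passes through vertices lying in the common link of $\tau$ and of the $v_k^*$ with $\ell_k \sim \ell_j$. Applying the induction hypothesis to the simplex $\{v_j\} \cup \tau \cup \{v_k^* : \ell_k \sim \ell_j\}$, and similarly with $v_j$ replaced by $v_j'$, provides enough connectivity to find bridge vertices connecting $v_j$ to $v_j'$ in the common link, and hence a homotopy of simplicial maps at step $j$ without modifying the other vertex images. Concatenating these homotopies over $j$ produces the required homotopy from $f_0$ to a simplicial map $f'$ with pairwise transverse cores.

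The main obstacle is the iterated application of \MT. In the smooth setting of \cite{grwstab1}, transversality is an open condition and iteration is automatic, but the paper notes that in the topological category transversality need not be preserved under small perturbations. Arranging the absence of triple intersections beforehand saves the argument: the supports of the transversalizing isotopies can then be chosen so small that every core not currently being worked on lies entirely outside, so previous relations are literally unaffected rather than merely being preserved by openness.
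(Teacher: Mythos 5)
Your plan is to keep the triangulation fixed and isotope the cores of the existing vertices one at a time, localizing each application of \MT\ near the offending intersection; this is genuinely different from the paper's argument, and it has a gap that I do not see how to close. The whole point of the lemma is that the cores $C_j$ of the $f_0(\ell_j)$ are \emph{not} yet pairwise transverse, so their pairwise intersections are completely uncontrolled: $C_j\cap C_k$ need not be a manifold, can have dimension up to $n$, and indeed two non-adjacent vertices may be sent by the simplicial map to thick cores whose cores coincide on an open set (or are literally equal, since simplicial maps may collapse). Your preliminary step of removing triple intersections therefore has no justification: the generic codimension count ``$3n>2n$'' is a general position argument, which is exactly what is unavailable in $\Top$ for $n$-dimensional submanifolds of a $2n$-manifold (Theorem \ref{thm.gp} requires $m\leq \tfrac{1}{3}(2n-1)$), and \MT\ cannot be applied either, since the sets $C_k\cap C_{k'}$ one would have to avoid are not locally flat submanifolds with normal microbundles. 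Without that step the supports $U_{j,k}$ cannot be chosen disjoint from the other cores, and since transversality is not an open condition in $\Top$ (the paper flags precisely this as the reason the argument is harder than in \cite{grwstab1}), each new isotopy can destroy the transversality already arranged; the induction over $k$ does not close.

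The assembly step has a second problem: you invoke the inductive connectivity hypothesis for the ``simplex'' $\tau\cup\{v_j\}\cup\{v_k^*:\ell_k\sim\ell_j\}$, but the number of neighbours of $\ell_j$ in $L_\circ$ is unbounded, so this simplex can have dimension $p'$ far larger than $i+p+1$, and the assumed connectivity $\tfrac{g-2}{2}-p'-2$ may then be negative; even $0$-connectedness of the common link (needed for your ``bridge vertices'') is not guaranteed. The paper's proof avoids both issues by never moving the existing cores at all: it runs a badness argument, taking a maximal bad simplex $\sigma\subset L_\circ$ (of dimension $\leq i$, which is what makes the dimension count work), using the hypothesis to fill $\mr{Link}_\sigma(L_\circ)\cong S^{i-j-1}$ by a disk mapping into $\mr{Link}_{f(\sigma)\cup\tau}(K^\delta_\circ(W))$, and replacing $\mr{Star}_{L_\circ}(\sigma)$ by $\partial\sigma\ast D_\circ$. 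Only the \emph{new} vertices are perturbed, and they only need to satisfy open disjointness conditions plus transversality to a fixed collection of already pairwise transverse cores, which \MT\ does provide. If you want to salvage your approach you would essentially have to rediscover this re-triangulation device; as written, the proposal does not prove the lemma.
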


\begin{proof}The argument is analogous to a so-called badness argument. First of all, by simplicial approximation we can assume there is a PL triangulation $L_\circ$ of $S^i$ and $f$ is given by a simplicial map $L^\circ \to K^\delta_\circ(W)$. 

For the sake of induction we suppose we are given a collection $T \subset L_\circ$ of vertices such that all pairs $t_0,t_1 \in T$ we have that the cores of $f(t_0)$ and $f(t_1)$ are transverse. The initial case has $T = \varnothing$, we finish when $T$ contains all vertices. We call a simplex $\sigma$ of $L_\circ$ \emph{bad} if for each vertex $\ell$ of $\sigma$, the core of $f(\ell)$ is \emph{not} transverse to the cores of $f(t)$ for $t \in T$. We show how to decrease the number of bad simplices of maximal dimension by 1, changing $T$, $L_\circ$, and $f$ in the process. Eventually this process eliminates all bad simplices.

Suppose that $\sigma$ is a bad simplex of maximal dimension $0 \leq j \leq i$. Since $\sigma$ is maximal, $\mr{Link}_{\sigma}(L_\circ)$ maps to the subcomplex of $\mr{Link}_{f(\sigma) \cup \tau}(K^\delta_\circ(W))$ with of thick cores with core transverse to the core of each $f(t)$ for $t \in T$. Since $f(\sigma) \cup \tau$ is a simplex of dimension $p+1 \leq p' \leq p+j+1$ the hypothesis of the Lemma applies and $\mr{Link}_{f(\sigma) \cup \tau}(K^\delta_\circ(W))$ is at least $(\frac{g(W)-2}{2}-(p+j+1)-2)$-connected. 

Since $L_\circ$ is a PL triangulation of a PL manifold, we have $\mr{Link}_{\sigma}(L_\circ) \cong S^{i-j-1}$. 
The dimension of the sphere can be estimated by $i-j-1 \leq \frac{g(W)-2}{2}-p-2-j-1 = \frac{g(W)-2}{2}-(p+j+1)-2$.  Thus we can find a triangulated PL disk $D_\circ$ of dimension $i-j$ with $\partial D_\circ = \mr{Link}_{\sigma}(L_\circ)$ and a map $\bar{f}: D_\circ \to \mr{Link}_{f(\sigma) \cup \tau}(K^\delta_\circ(W))$ which extends $f|_{\partial D_\circ}$. Since all cores of $f(t)$ for $t \in T$ are transverse, by a small perturbation of all vertices $d \in \mr{int}(D_\circ)$ of the core of $\bar{f}(d)$, we can assume that these cores are all transverse to the cores of $f(t)$ for $t \in T$ and to each other. 


Since $\bar{f}$ has image in $\mr{Link}_{f(\sigma) \cup \tau}(K^\delta_\circ(W))$, we can combine $f$ and $\bar{f}$ to a map $f|_{\sigma} \ast \bar{f}: \sigma \ast D_\circ \to \mr{Link}_{\tau}(K^\delta_\circ(W))$. The complex is $\sigma \ast D_\circ$ is a $(i+1)$-dimensional PL disk and its boundary is given by glueing the PL disks $\sigma \ast \partial D_\circ = \mr{Star}_{L_\circ}(\sigma)$ and $\partial \sigma \ast D_\circ$ along the PL sphere $\partial \sigma \ast \partial D_\circ \cong \partial \mr{Star}_{L_\circ}(\sigma)$. Thus $f_\sigma \ast \bar{f}$ induces a homotopy from $f$ to a new map
\[f': L'_\circ = (L_\circ \backslash \mr{int}(\mr{Star}_{L_\circ}(\sigma))) \cup_{\mr{Star}_{L_\circ}(\sigma)} (\partial \sigma \ast D_\circ) \to \mr{Link}_{\tau}(K^\delta_\circ(W))\]
If we let $T'$ be the union of $T$ and the vertices in $\mr{int}(D_\circ)$, then this new map has one fewer bad simplices of dimension $j$ and no new bad simplices of dimension $>j$. To see this, note we removed $\sigma$, and the only new simplices are those involving a vertex in $\mr{int}(D_\circ)$ and these can not be bad since those vertices are in $T'$. Renaming $T'$, $L'_\circ$ and $f'$ to $T$, $L_\circ$, and $f$, we repeat this procedure to eventually make $T$ equal to the set of all vertices.\end{proof}


After these two lemma's we can prove Proposition \ref{prop.kdelta}.

\begin{proof}[Proof of Proposition \ref{prop.kdelta}] Recall that we need to prove that $K^\delta_\circ(W)$ is weakly Cohen-Macauley of dimension $\geq \frac{g(W)-2}{2}$. This means that for all $p$-simplices $\tau$, including the empty $-1$-simplex, we need to check that $\mr{Link}_{\tau}(K^\delta_\circ(W))$ is $(\frac{g(W)-2}{2}-p-2)$-connected. The proof is by downstairs induction over $p$. In the initial cases $p \geq \frac{g(W)-2}{2}$ there is nothing to prove. For the induction step we assume we have proven the statement for all $p' > p$, and we will prove it for $p$. After completing the step $p=-1$ we are done.

If we are given a map $f: S^i  \to ||\mr{Link}_{K^\delta_\circ(W)}(\tau)||$ for $i \leq  (\frac{g(W)-2}{2}-p-2)$, we need to extend it for the disk. By simplicial approximation we can assume $f$ is simplicial with respect to a triangulation $L_\circ$ of $S^i$. By the induction hypothesis, Lemma \ref{lem.transverse} applies and we can assume that for all vertices $\ell_0,\ell_1$ of $L_\circ$ the cores of $f(\ell_0)$ and $f(\ell_1)$ are transverse.

Using the map $H \hookrightarrow (I^\mr{Fr}_n(W_{1,1}),\lambda,\mu)$ of Example \ref{examp.hquad} and the $g(W)$ disjoint embeddings of $W_{1,1}$ into $W$ from Definition \ref{def.gw}, we get $g(W)$ orthogonal copies of $H$ in $(I^\mr{Fr}_n(W),\lambda,\mu)$. Using Theorem \ref{thm.connalg} we conclude that the link of $\pi(\tau)$ in $K^\mr{alg}_\circ(I^\mr{Fr}_n(W),\lambda,\mu)$ is $ (\frac{g(W)-2}{2}-p-2)$-connected.  Thus we can find a map $g$ making the following diagram commute
\[\xymatrix{S^i  \ar[r]^-f \ar[d] & ||\mr{Link}_{\tau}(K^\delta_\circ(W))|| \ar[d]^\pi \\
D^{i+1} \ar[r]_-g \ar@{.>}[ru] & ||\mr{Link}_{\pi(\tau)}(K^\mr{alg}_\circ(I^\mr{Fr}_n(W),\lambda,\mu))||}\]
A link of a $p$-simplex in a weakly Cohen-Macauley complex of dimension $\geq n$ is itself weakly Cohen-Macauley of dimension $\geq n-p-1$ by Lemma 2.1 of \cite{grwstab1}, so by Theorem \ref{thm.connalg} we can apply Theorem \ref{thm.coloring} and assume that there is a triangulation $K_\circ$ of $D^{i+1}$ so that $g$ is simplicial and satisfies the properties (i) and (ii) of that Theorem. To prove the proposition, it suffices to prove we can find a dotted lift making both triangles commute. But we have arranged it so that Lemma \ref{lem.lifttransverse} applies.
\end{proof}

\subsubsection{From discrete to topologized cores} We have now established the connectivity of the simplicial complex $K^\delta_\circ(W)$. There is a semisimplicial set $K^\delta_\bullet(W)$ with homeomorphic geometric realization, with $p$-simplices given by $(p+1)$-tuples $((t_0,\varphi_0),\ldots,(t_p,\varphi_p))$ such that $t_0<\ldots<t_p$. We will deduce the high connectivity of $K_\bullet(W)$ from that of $K^\delta_\bullet(W)_p$ using the following two lemma's, Propositions 2.6 and 2.7 of \cite{grwstab1}.

\begin{lemma}\label{lem.geomrel} If $X_\bullet \to Y_\bullet$ is a map of semisimplicial spaces and $X_p \to Y_p$ is $(n-p)$-connected, then the map of spaces $||X_\bullet|| \to ||Y_\bullet||$ is $n$-connected.
\end{lemma}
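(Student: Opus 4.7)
The plan is to induct on the skeletal filtration of the fat geometric realization. Recall that $||X_\bullet||$ is filtered by skeleta $||X_\bullet||^{(p)}$, with $||X_\bullet||^{(-1)} = \varnothing$ and $||X_\bullet||^{(p)}$ obtained from $||X_\bullet||^{(p-1)}$ as the pushout of the diagram $||X_\bullet||^{(p-1)} \leftarrow \partial \Delta^p \times X_p \hookrightarrow \Delta^p \times X_p$. Consequently the (homotopy) cofibre of the skeletal inclusion $||X_\bullet||^{(p-1)} \hookrightarrow ||X_\bullet||^{(p)}$ is $S^p \wedge (X_p)_+$, and the map $X_\bullet \to Y_\bullet$ respects the filtration, yielding at each level a map of cofibre sequences from the $X$-side to the $Y$-side.

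I would then show by induction on $p \geq 0$ that $||X_\bullet||^{(p)} \to ||Y_\bullet||^{(p)}$ is $n$-connected. The base case $p=0$ is precisely the hypothesis that $X_0 \to Y_0$ is $n$-connected. For the inductive step, the hypothesis that $X_p \to Y_p$ is $(n-p)$-connected (trivially so when $n-p<0$) combined with the fact that smashing with $S^p$ raises the connectivity of a map by $p$ implies that $S^p \wedge (X_p)_+ \to S^p \wedge (Y_p)_+$ is $n$-connected. Together with the inductive hypothesis on the $(p-1)$-skeleton, the long exact sequence of homotopy groups applied to the map of cofibre sequences (essentially the five-lemma) forces $||X_\bullet||^{(p)} \to ||Y_\bullet||^{(p)}$ to be $n$-connected. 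Finally, $||X_\bullet|| = \mr{colim}_p ||X_\bullet||^{(p)}$ along cofibrations; since spheres are compact, homotopy groups commute with this sequential colimit, so $n$-connectivity passes to $||X_\bullet|| \to ||Y_\bullet||$.

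The main technical care-point is the cofibre identification for the fat geometric realization of a semisimplicial space; working with the fat rather than the thin realization is what makes this clean, as one need not worry about degeneracies interacting with the filtration. The other delicate point is the connectivity arithmetic: the hypothesis that $X_p \to Y_p$ is $(n-p)$-connected is tuned precisely so that the cofibre after smashing with $S^p$ is $n$-connected, and hence the induction closes at the correct value; any weaker hypothesis in a single degree would break the inductive step.
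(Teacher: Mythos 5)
Your overall strategy --- induction over the skeletal filtration of the fat realization, then passage to the colimit --- is the standard one (the paper does not prove this lemma itself, but cites Propositions 2.6 and 2.7 of \cite{grwstab1}, whose proof follows this outline). However, your inductive step has a genuine gap: a cofibre sequence does not give a long exact sequence of \emph{homotopy} groups, so knowing that $||X_\bullet||^{(p-1)} \to ||Y_\bullet||^{(p-1)}$ is $n$-connected and that $S^p \wedge (X_p)_+ \to S^p \wedge (Y_p)_+$ is $n$-connected does not, by any five-lemma argument, yield $n$-connectivity of $||X_\bullet||^{(p)} \to ||Y_\bullet||^{(p)}$. What your argument does prove is homology $n$-connectivity; but the lemma is a statement about homotopy groups and is used as such in the paper (in Propositions \ref{prop.kcconn} and \ref{prop.kconn} the relevant spaces are far from simply connected, so no Whitehead-type upgrade is available). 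The natural repair via relative homotopy groups also does not go through as stated: $\pi_*\bigl(||X_\bullet||^{(p)}, ||X_\bullet||^{(p-1)}\bigr)$ agrees with $\pi_*\bigl(S^p \wedge (X_p)_+\bigr)$ only in a Blakers--Massey range governed by the connectivity of the attaching map $\partial\Delta^p \times X_p \to ||X_\bullet||^{(p-1)}$, which is not controlled by the hypotheses.

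The correct inductive step, and essentially the argument behind Proposition 2.6 of \cite{grwstab1}, keeps your filtration but replaces the cofibre comparison by two gluing facts: (a) the pushout-product of the $(n-p)$-connected map $X_p \to Y_p$ with the $(p-1)$-connected cofibration $\partial\Delta^p \hookrightarrow \Delta^p$, that is the map $\Delta^p \times X_p \cup_{\partial\Delta^p \times X_p} \partial\Delta^p \times Y_p \to \Delta^p \times Y_p$, is $n$-connected (here the connectivity arithmetic $(n-p)+(p-1)+1=n$ enters, rather than through smashing with $S^p$); and (b) a cobase change of an $n$-connected map along a cofibration is again $n$-connected (factor the map, up to weak equivalence, as a relative CW inclusion with cells of dimension at least $n+1$). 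One then factors $||X_\bullet||^{(p)} \to ||Y_\bullet||^{(p)}$ through $||X_\bullet||^{(p)} \cup_{||X_\bullet||^{(p-1)}} ||Y_\bullet||^{(p-1)}$: the first map is a cobase change of the $(p-1)$-skeletal comparison, the second a cobase change of the pushout-product map, and both are $n$-connected. Your base case and colimit step are fine, and your smash-product observation is true, but it is not where the difficulty lies; the content of the lemma is precisely the gluing statement that your five lemma elides.
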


\begin{definition}A map of spaces $f: X \to Y$ is a microfibration if in each diagram:
\[\xymatrix{D^i \ar[r] \ar[d] & X \ar[d]^f \\
D^i \times [0,1] \ar[r] & Y}\]
there exists an $\epsilon \in (0,1]$ such that there is a lift of $D^i \times [0,\epsilon]$.\end{definition}

\begin{lemma}\label{lem.microfib} If $f: X \to Y$ is a microfibration with $n$-connected fibers, then $f$ is $(n+1)$-connected.\end{lemma}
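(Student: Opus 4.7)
I would show that for every lifting problem
\[\xymatrix{S^{i-1} \ar[r]^\alpha \ar[d] & X \ar[d]^f \\ D^i \ar[r]_\beta & Y}\]
with $i \leq n+1$, there exists a map $\tilde\beta \colon D^i \to X$ extending $\alpha$ with $f \tilde\beta$ homotopic to $\beta$ rel $S^{i-1}$; this is equivalent to $(n+1)$-connectedness of $f$. The approach combines the microfibration property, which supplies lifts of short-time homotopies, with the $n$-connectedness of the fibers, which supplies extensions across cells of dimension $\leq n+1$.

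The key intermediate step is a mild strengthening of the microfibration definition: for any compact space $K$, map $g \colon K \to X$, and homotopy $H \colon K \times [0,1] \to Y$ with $H_0 = f \circ g$, there exist $\epsilon > 0$ and a lift $\tilde H \colon K \times [0, \epsilon] \to X$ of $H|_{K \times [0, \epsilon]}$ with $\tilde H_0 = g$. I would prove this by covering $K$ by finitely many closed disks, applying the microfibration definition on each to obtain local lifts $\tilde H_j \colon U_j \times [0, \epsilon_j] \to X$, taking $\epsilon = \min_j \epsilon_j$, and splicing them on overlaps using path-connectedness of the fibers. Granted this enhancement, I parameterize the lifting problem by writing $D^i$ as a collar $S^{i-1} \times [0, 1]$ glued to an inner disk $D^i_{\mathrm{in}}$ along $S^{i-1} \times \{0\} \cong S^{i-1}$, and apply the enhancement to $K = S^{i-1}$ with $g = \alpha$ to obtain a partial lift on $S^{i-1} \times [0, \epsilon]$ extending $\alpha$.

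It then remains to extend the lift across $D^i_{\mathrm{in}}$. After a homotopy of $\beta$ rel $S^{i-1}$ arranging $\beta|_{D^i_{\mathrm{in}}}$ to be constant with some value $y_0 \in Y$, the extension problem becomes that of extending a map $S^{i-1} \to F_{y_0}$ to $D^i_{\mathrm{in}} \to F_{y_0}$, which is possible because $F_{y_0}$ is $n$-connected and $i - 1 \leq n$. The main obstacle is the splicing step in the microfibration enhancement, since local short-time lifts on overlapping disks will not in general agree on the nose: one must patch them using fiber connectedness, and organize this so that the inductive dependence on the main conclusion (fiber connectivity in each dimension up to $n$) is tractable. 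A standard way to sidestep this bookkeeping is to pass to the mapping path space $E = X \times_Y Y^{[0,1]}$ with $p \colon E \to Y$ evaluation at $1$: then $p$ is a Serre fibration with fiber $\mathrm{hofib}_y(f)$, and the microfibration enhancement is precisely what is needed to show that the canonical map $X \to E$ is a weak equivalence, after which the long exact sequence of $p$ finishes the argument.
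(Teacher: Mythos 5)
Your reduction in the first step is fine, and for context: the paper does not prove this lemma at all, it imports it as Proposition 2.6 of \cite{grwstab1} (an elaboration of Weiss's microfibration lemma), so the question is whether your route works on its own. It does not, and the decisive failure is the final ``sidestep'' through the mapping path space $E = X \times_Y Y^{[0,1]}$, $p = \mathrm{ev}_1$. The canonical map $X \to E$ is \emph{always} a homotopy equivalence (shrink paths back to their starting points); no microfibration hypothesis is needed or used there. Moreover the fibers of $p$ are the \emph{homotopy} fibers of $f$, not the actual fibers, so the long exact sequence of $p$ merely restates that $f$ is $(n+1)$-connected if and only if its homotopy fibers are $n$-connected. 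Your hypothesis concerns the strict fibers $f^{-1}(y)$, and it never enters this argument. The entire content of the lemma is the comparison between strict and homotopy fibers, and that comparison genuinely fails for Serre microfibrations in general: the projection $\bR^2\setminus\{0\} \to \bR$ is a Serre microfibration whose fiber over $0$ is disconnected while all of its homotopy fibers are connected (they have the homotopy type of $S^1$). So any correct proof must combine the fiber connectivity with the dimension bound $i \leq n+1$, which your final paragraph cannot do.

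The geometric half has the same difficulty in disguise. The ``enhancement'' to a compact polyhedron such as $S^{i-1}$ is true, but not for the reason you give: splicing short-time lifts that disagree on overlaps by choosing paths in fibers is not a construction, whereas the statement follows from the disk case by the usual homeomorphism-of-pairs trick $(D^j\times[0,1], D^j\times\{0\}) \cong (D^j\times[0,1], D^j\times\{0\}\cup\partial D^j\times[0,1])$ plus induction over cells, with no connectivity hypothesis. The real problem is that the microfibration property only produces a lift on $S^{i-1}\times[0,\epsilon]$ for some uncontrolled $\epsilon>0$, while $\beta$ has been made constant only on a fixed inner disk. Unless the partial lift reaches the constant region, the remaining extension is not a single-fiber problem, and you cannot simply iterate short-time lifts across the gap, since the successive $\epsilon$'s have no positive lower bound. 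Passing from germ-level lifts to unit-time lifts is exactly the substance of Weiss's lemma and of Proposition 2.6 of \cite{grwstab1}: there one subdivides the disk finely and builds the lift by induction over the skeleta, using the microfibration property to thicken partial lifts and push them into a single fiber, and using $n$-connectivity of the fibers together with $i \leq n+1$ to fill in over each small cell. That inductive mechanism is the missing idea; as written, your proposal defers it (the ``splicing'') and then discards it (the mapping path space), so the proof has a genuine gap.
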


We will use the following intermediate complex.

\begin{definition}$K^C_\bullet(W)$ is the following semisimplicial simplicial set:
\begin{itemize}
\item The $0$-simplices $K^C_0(W)$ in semisimplicial direction are equal to $K_0(W)$ as in Definition \ref{def.k},
\item The $q$-simplices $K_q(W)$ in semisimplicial direction, have $k$-simplices given by $(q+1)$-tuples $((t_0,\varphi_0),\ldots,(t_q,\varphi_q))$ of $k$-simplices of $K_0(W)$ such that $t_0<\ldots<t_q$ and for all $t \in \Delta^k$ the images of $C$ under the $\varphi_i$ are disjoint.
\end{itemize}\end{definition}

\begin{proposition}\label{prop.kcconn} We have that $K^C_\bullet(W)$ is $ \frac{g(W)-4}{2}$-connected.\end{proposition}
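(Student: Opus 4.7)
The plan is to deduce the connectivity of $||K^C_\bullet(W)||$ from that of $||K^\delta_\bullet(W)||$, which is already known: by the remark preceding this proposition, $||K^\delta_\bullet(W)||$ is homeomorphic to $||K^\delta_\circ(W)||$, and the latter is $\frac{g(W)-4}{2}$-connected by Proposition~\ref{prop.kdelta}. The transfer will use the microfibration Lemma~\ref{lem.microfib} together with the realization comparison Lemma~\ref{lem.geomrel}, exactly in the spirit of \cite{grwstab1}.

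First I would set up, for each semisimplicial level $q$, an auxiliary space $Z_q$ together with a map of semisimplicial spaces $\pi_\bullet \colon Z_\bullet \to K^\delta_\bullet(W)$ and a zig-zag of levelwise weak equivalences connecting $Z_\bullet$ to $|K^C_\bullet(W)|$. A natural candidate for $Z_q$ consists of pairs $(\sigma,t)$ where $\sigma$ is a $k$-simplex of $K^C_q(W)$ and $t\in|\Delta^k|$, with $\pi_q$ given by evaluation $(\sigma,t)\mapsto \sigma(t)\in K^\delta_q(W)$; the target is discrete, so the naive continuity fails and one repairs it by restricting to ``rigidified'' families, using parametrized isotopy extension (\IE) to deform $\sigma$ to a constant family at $\sigma(t)$. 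The semisimplicial face maps in the $q$-direction pose no problem because they simply forget one of the embeddings in the tuple.

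Second, I would verify the hypotheses of Lemma~\ref{lem.microfib} for each $\pi_q$. The fibers of $\pi_q$ over a vertex $v\in K^\delta_q(W)$ are spaces of families of disjoint-core tuples equipped with a homotopy to the constant family at $v$, and such spaces are weakly contractible via a straight-line scaling of the parameter of the homotopy once an ambient isotopy trivializing the family has been chosen. The microfibration lifting property is a direct consequence of parametrized isotopy extension (\IE) applied in families: a short-time homotopy in $|K^C_q(W)|$ extends to a motion of the rigidifying ambient isotopy, preserving the chosen vertex. Lemma~\ref{lem.microfib} then implies $\pi_q$ is a weak equivalence on each level, and Lemma~\ref{lem.geomrel} combines the levelwise equivalences to give $||K^C_\bullet(W)||\simeq||K^\delta_\bullet(W)||$ through the required range of connectivity.

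The main technical obstacle is the precise formulation of the rigidification step. In the smooth setting, Sard's theorem and tubular neighborhoods make this essentially automatic, but in the topological category we must rely solely on parametrized isotopy extension (\IE) applied relative to the collar germ $[\delta,\eta]$ used to define $K^C_\bullet(W)$ and $K^\delta_\bullet(W)$. Preserving the collar germ throughout the isotopy is crucial so that the rigidified families remain within the allowed simplicial sets, and the microfibration lifting property has to be verified explicitly from \IE\ rather than appealed to abstractly. Once this is carried out, the rest of the argument is a formal application of Lemmas~\ref{lem.microfib} and~\ref{lem.geomrel}.
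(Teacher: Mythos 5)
There is a genuine gap: your strategy hinges on producing a (zig-zag of) \emph{levelwise} weak equivalences between $K^C_\bullet(W)$ and $K^\delta_\bullet(W)$, and no such levelwise comparison can exist. For fixed $q$, the level $K^\delta_q(W)$ is a discrete set of tuples of embeddings, while $\pi_0(K^C_q(W))$ is the set of such tuples up to isotopy through disjoint-core configurations; distinct but isotopic tuples are different points of $K^\delta_q(W)$ lying in the same component of $K^C_q(W)$, so already $\pi_0$ differs drastically. Your proposed repair cannot fix this: any continuous map to the discrete set $K^\delta_q(W)$ is locally constant, so an ``evaluation'' map from a space equivalent to $|K^C_q(W)|$ with weakly contractible fibers would force every isotopy class to consist of a single point (or every component of the embedding space to be contractible), which is false; and if instead you build a path-space-like $Z_q$ over $K^\delta_q(W)$ with contractible fibers, then $Z_q$ is equivalent to the discrete set and the other leg of your zig-zag, $Z_q\simeq |K^C_q(W)|$, fails. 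The claimed appeal to \IE\ to ``rigidify'' families does not change this counting of components.

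The paper's proof is structured precisely to avoid a levelwise comparison: it forms the bisemisimplicial object $K^C_{\bullet,\bullet}(W)$ whose $(q_1,q_2)$-simplices mix discrete and topologized directions, uses the identity $||\epsilon||\simeq ||\iota||\circ ||\delta||$ (Lemma 5.8 of \cite{grwstab1}) so that it suffices to show $||\epsilon||$ is $\tfrac{g(W)-2}{2}$-connected, and verifies this via Lemma \ref{lem.geomrel} applied to the maps $\epsilon_{q_2}\colon ||K^C_{\bullet,q_2}(W)||\to K^C_{q_2}(W)$, whose required connectivity $\tfrac{g-2}{2}-q_2-1$ \emph{decreases with} $q_2$. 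That connectivity is obtained from a microfibration over the disk $D^{i+1}$ (not over a discrete set) whose fibers are realizations of links $||\mr{Link}_{\varphi|_t}(K^\delta_\bullet(W))||$; these fibers are only $(\tfrac{g-2}{2}-q_2-2)$-connected, and it is exactly the weak Cohen--Macaulay refinement in Proposition \ref{prop.kdelta} (connectivity of links, not just of the whole complex) that makes Lemma \ref{lem.microfib} applicable. Your proposal uses only the absolute connectivity of $K^\delta_\circ(W)$ and claims contractible fibers, so it bypasses the link statement that is in fact indispensable; as written the argument does not go through.
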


\begin{proof}Consider the bisemisimplicial simplicial set $K^C_{\bullet,\bullet}(W)$ with $(q_1,q_2)$-simplices given by the simplicial set of $(q_1+q_2+2)$-simplices of $K^C_\circ(W)$ such that the first $q_1+1$ are obtained from $K^\delta_\circ(W)$ by degeneracies (i.e. constant over $\Delta^k)$. 

There is a map $\iota: K^\delta_\bullet(W) \to K^C_\bullet(W)$ given by inclusion, and two augmentations $\epsilon: K^C_{\bullet,\bullet}(W) \to K^C_\bullet(W)$ and $\delta: K^C_{\bullet,\bullet}(W) \to K^\delta_\bullet(W)$. We have that $||\epsilon|| \simeq ||\iota|| \circ ||\delta||$ by Lemma 5.8 of \cite{grwstab1}. If we show that $||\epsilon||$ is $\frac{g(W)-2}{2}$-connected, we are done since $||\epsilon||$ is a $\frac{g(W)-2}{2}$-connected map factoring over the $\frac{g(W)-4}{2}$-connected space $||K^\delta_\bullet(W)||$. 

To show $||\epsilon||$ is a highly-connected, using Lemma \ref{lem.geomrel} it suffices to show that the map of simplicial spaces
\[\epsilon_{q_2}: ||K^C_{\bullet,q_2}(W)|| \to K^C_{q_2}(W)\]
is $(\frac{g-2}{2}-q_2-1)$-connected. As used in the proof of Proposition \ref{prop.resconn}, we can freely adjoin degeneracies to $K^C_{\bullet,q_2}(W)$ to obtain a bisimplicial set $\tilde{K}^C_{\bullet,q_2}(W)$ given by
\[[q] \mapsto \coprod_{[q] \twoheadrightarrow [q']} K^C_{q',q_2}(W)\]
such that $||K^C_{\bullet,q_2}(W)|| \cong |\tilde{K}^C_{\bullet,q_2}(W)| \cong |\mr{diag}(\tilde{K}^C_{\bullet,q_2}(W))|$. The advantage of $\mr{diag}(\tilde{K}^C_{\bullet,q_2}(W))$ is that we can use simplicial approximation: to show $||K^C_{\bullet,q_2}(W)|| \to K^C_{q_2}(W)$
is $(\frac{g-2}{2}-q_2-1)$-connected it suffices to show that for each pair $(K,\partial K)$ of finite simplicial sets with $(|K|,|\partial K|) \cong (D^{i+1},S^{i})$ with $i \leq \frac{g-2}{2}-q_2-2$ and each diagram
\[\xymatrix{\partial K \ar[r]^-g \ar[d] & \mr{diag}(\tilde{K}^C_{\bullet,q_2}(W)) \ar[d] \\
K \ar[r]_-G & K^C_{q_2}(W)}\]
we can find a lift of $|G|$ to a map of spaces $\tilde{G}: |K| \to |||K^C_{\bullet,q_2}(W)|||$ such that $\tilde{G}|_{|\partial K|} = |g|$.

By Lemma \ref{lem.lfglue}, $G$ can be represented by locally flat immersion $\varphi: D^{i+1} \times (\sqcup_{q_2+1} H) \to D^{i+1} \times W$ over $D^{i+1}$ that is an embedding when restricted to $\sqcup_{q_2+1} C$. The map $|g|$ is given by a map $S^i \to ||K^\delta_\bullet(W)||$ whose cores for $t \in S^{i-1}$ are disjoint from $\varphi|_t(\sqcup_{q_2+1} C)$. Consider the subspace of the space $D^{i+1} \times ||K^\delta_\bullet(W)||$ given by elements $(t,x)$ such that $x$ has cores avoiding those of $\varphi|_t$. The fibers of the projection to $D^{i+1}$ are given by $||\mr{Link}_{\varphi|_t}(K^\delta_\bullet(W))||$. By Proposition \ref{prop.kdelta}, these fibers are $(\frac{g-2}{2}-q_2-2)$-connected. Furthermore, since the condition of avoiding cores is open, the projection is a microfibration. Now use Lemma \ref{lem.microfib}. 
\end{proof}

\begin{proposition}\label{prop.kconn} We have that $K_\bullet(W)$ is $ \frac{g(W)-4}{2}$-connected.\end{proposition}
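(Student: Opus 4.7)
\emph{Proof proposal.} The plan is to deduce Proposition \ref{prop.kconn} from Proposition \ref{prop.kcconn} by showing that the tautological inclusion $\iota\colon K_\bullet(W) \hookrightarrow K^C_\bullet(W)$ is a weak equivalence on thin realizations. Since $K^C_\bullet(W)$ is already known to be $\frac{g(W)-4}{2}$-connected by Proposition \ref{prop.kcconn}, this is enough. By Lemma \ref{lem.geomrel}, the problem further reduces to proving that the inclusion $K_q(W) \hookrightarrow K^C_q(W)$ of Kan simplicial sets is a weak equivalence for each fixed $q$.

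The geometric heart of the argument is a shrinking construction. Given a $k$-simplex of $K^C_q(W)$, i.e.\ a family of $q+1$ thick-core embeddings over $\Delta^k$ whose cores $C$ are pairwise disjoint for every $t \in \Delta^k$, one fixes a deformation retraction of $H$ onto an arbitrarily small neighborhood of $C$ and realizes it as an ambient isotopy of $W$ by parametrized isotopy extension (\IE). By compactness of $\Delta^k$ and pairwise disjointness of the cores throughout $\Delta^k$, a sufficient amount of such shrinking produces a $k$-simplex of $K_q(W)$. Moreover, on any portion of $\Delta^k$ where the full thick cores are already disjoint, this shrinking isotopy can be taken to be trivial, since disjointness is an open condition.

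To upgrade this pointwise construction to a weak equivalence, the plan is to mirror the microfibration argument used at the end of the proof of Proposition \ref{prop.kcconn}. Given a diagram $\partial\Delta^{i+1} \to K_q(W)$, $\Delta^{i+1} \to K^C_q(W)$, one represents the data after freely adjoining degeneracies (as in the proof of Proposition \ref{prop.resconn}) by a family of thick cores over $\Delta^{i+1}$ with disjoint cores throughout and disjoint thick cores over $\partial\Delta^{i+1}$. One then interpolates between the identity on $\partial\Delta^{i+1}$ and a sufficient shrinking in the interior by means of a bump function. The space of admissible shrinking isotopies over each point of $\Delta^{i+1}$ is convex and hence contractible, and the total space of these isotopies is a microfibration over $\Delta^{i+1}$ by openness of disjointness and compactness; Lemma \ref{lem.microfib} then delivers the desired lift into $K_q(W)$.

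The main obstacle I anticipate is that the required amount of shrinking depends continuously but nonuniformly on the embedding data, so no strict deformation retraction of $K^C_\bullet(W)$ onto $K_\bullet(W)$ exists. This is why the argument must be cast in microfibration form rather than as a literal retract: one needs only local lifts of prescribed homotopies, and openness of disjointness together with compactness of simplices supplies exactly that.
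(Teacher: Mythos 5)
Your proposal follows the paper's strategy: reduce via Lemma \ref{lem.geomrel} to showing that each inclusion $K_q(W) \hookrightarrow K^C_q(W)$ of Kan complexes is a weak equivalence, and then use a shrinking construction towards the core $C$. This is the right plan, but your implementation is heavier than the paper's in two respects, one of which points at a real awkwardness. First, the paper does not invoke isotopy extension (\IE) to realize the shrinking as an ambient isotopy of $W$; it simply fixes a one-parameter family of self-embeddings $\phi_s$ of $H \cup ((-\epsilon,0]\times D^{2n-1})$ with $\phi_0 = \mr{id}$, $\phi_s|_C = \mr{id}$, nested images, and $\bigcap_s \mr{im}(\phi_s) = C$, and precomposes each $\varphi_i$ with $\phi_s$. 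This is not merely simpler: a genuine ambient isotopy of $W$ would have to shrink all of the overlapping thick cores $\varphi_0(H),\ldots,\varphi_q(H)$ simultaneously, which is problematic precisely because they overlap, whereas precomposition shrinks each one independently on the source side and avoids the issue. Second, the microfibration apparatus you set up is unnecessary. Since the minimal shrinking parameter needed at $t$ depends continuously on $t \in \Delta^i$, compactness of $\Delta^i$ already gives a single uniform $R$; the homotopy $u \mapsto \varphi_i(t) \circ \phi_{uR}$ stays in $K^C_q(W)$ throughout (as $\phi_s$ fixes $C$), lands in $K_q(W)$ at $u=1$, and over $\partial\Delta^i$ remains in $K_q(W)$ for all $u$ because shrinking preserves disjointness of thick cores---which is all the weak-equivalence criterion for Kan complexes demands. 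Your closing claim that ``the argument must be cast in microfibration form'' is therefore mistaken: the nonexistence of a global deformation retraction is resolved by per-simplex compactness, and the microfibration technique in this paper appears only in the proof of Proposition \ref{prop.kcconn}, not in this one.
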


\begin{proof}We prove that the inclusion $i: K_\bullet(W) \hookrightarrow K^C_\bullet(W)$ is a weak equivalence. By Lemma \ref{lem.geomrel} it suffices to prove this levelwise, i.e. fixing $q \geq 0$. We proved that $K_q(W)$ is Kan in Lemma \ref{lem.kqkan}, and a similar argment gives that $ K^C_q(W)$ is Kan. Thus it suffices to find in each diagram
\[\xymatrix{\partial \Delta^i \ar[r]^-g \ar[d] & K_q(W) \ar[d]^i \\
\Delta^i \ar[r]_-G \ar@{.>}[ru]^{\tilde{g}} &  K^C_q(W)}\]
a map $\tilde{G}: \Delta^i \times \Delta^1 \to K^C_q(W)$ such that $\tilde{G}|_{\Delta^i \times 0} = G$ and which on $\partial \Delta^i \times \Delta^1$ is given by $i \circ g \circ \pi$ with $\pi: \partial \Delta^i \times \Delta^1 \to \partial  \Delta^i$ the projection, and a lift $\tilde{g}:  \Delta^i \to K^C_q(W)$ such that $i \circ \tilde{g} = \tilde{G}|_{ \Delta^i \times 1}$. 

We do this by shrinking the neighborhood of the cores as follows. There is a locally flat family of embeddings $\phi_s: W \to W$ with $s \in [0,\infty)$ such that \begin{enumerate}[(i)]
\item $\phi_0 = \mr{id}$,
\item $\phi_s$ is the identity on $C$,
\item $\mr{im}(\phi_s) \subset \mr{im}(\phi_{s'})$ if $s>s'$,
\item $\bigcap_s \mr{im}(\phi_s) = C$,
\item  $\phi_s$ is given by scaling in the $D^{2n-1}$-component on $[-1,0] \times D^{2n-1}$.
\end{enumerate}
Then $\tilde{G}$ is obtained from $G$ by precomposing the embeddings of $W$ with $\phi_s$ for $s \in [0,R] \cong \Delta^1$ for $R$ sufficently large.
\end{proof}

\section{Theorems about topological manifolds that serve as input}\label{sec.input} In this section we explain the definitions and theorems about topological manifolds relevant to this paper. We have taken the liberty of mentioning a few additional related results which are not used. In general, the references are \cite{siebenmannicm,kirbysiebenmann,freedmanquinn}. The following convention is standard and used in many of the references, even though we will not explicitly use it except in Corollary \ref{cor.whitney}. 

\begin{convention}
All our manifolds are Hausdorff and paracompact, hence metrizable.
\end{convention}

\subsection{Locally flat embeddings}\label{subsec.basics}

We start with the notion of an embedding of topological manifolds. For this to be well-behaved, we will need to require at the very least the local existence of a normal bundle.

\begin{definition}\begin{enumerate}[(i)]
\item A \emph{locally flat embedding} $f: M \to N$ is a map that is a homeomorphism onto its image and has the property that for all $m \in M$ there exists a neighborhood $m \in V \subset M$ and a map $V \times \bR^{n-m} \to M$ which extends $f|_V$ and is a homeomorphism onto its image.
\item Let $B$ be a topological manifold, possibly with boundary. A \emph{locally flat embedding $f: M \to N$ over $B$} is a map $f$ fitting into a commutative diagram
\[\xymatrix{B \times M \ar[rr]^f \ar[rd] & & B \times N \ar[ld] \\
& B &}\] which is a homeomorphism onto its image and has the property that for all $(t,m) \in B \times M$ there exists open neighborhoods $U$ of $t$ and $V$ of $m$ such that there is a map $U \times V \times \bR^{n-m} \to B \times N$ over $B$ which extends $f|_{U \times V}$ and is a homeomorphism onto its image. 
\item The \emph{simplicial set of locally flat embeddings} $\mr{Emb}^\mr{lf}(M,N)$ has as $k$-simplices the locally flat embeddings $f: \Delta^k \times M \hookrightarrow \Delta^k \times N$ over $\Delta^k$.
\end{enumerate} \end{definition}

We will occasionally be interested in embeddings of codimension $0$ manifolds with boundary.
\begin{definition}Suppose $M$ and $N$ have the same dimension and $M$ has boundary. Then the \emph{simplicial set of locally flat embeddings} $\mr{Emb}^\mr{lf}(M,N)$ has as $k$-simplices the embeddings $\Delta^k \times M \to \Delta^k \times N$ over $\Delta^k$ such that the restriction to $\partial M$ is locally flat.
\end{definition}

Alternative definitions of embeddings not only exist, but can be useful. We explain them in the following remark.

\begin{remark}\label{rem.embvariants} There are several choices for spaces of embeddings in the settings of topological manifolds: 
\[\mr{Emb}^\mr{glf}(M,N) \subset \mr{Emb}^\mr{lf}(M,N) \subset \mr{Emb}^\mr{plf}(M,N) \subset \mr{Emb}^\mr{w}(M,N)\]
\begin{enumerate}[(i)]
\item The first, $\mr{Emb}^\mr{glf}(M,N)$, are the \emph{globally flat embeddings}. That is, the $k$-simplices are $f: \Delta^k \times M \to \Delta^k \times N$ over $\Delta^k$ such that for all $t \in  \Delta^k$ there exists an open neighborhood $U$ of $t$, and an open $n$-dimensional manifold $V$ containing $M$ as a locally flat submanifold, such that there is a map $U \times V \to \Delta^k \times N$ over $\Delta^k$ which extends $f|_{U \times M}$ and is a homeomorphism onto its image. 
\item The second, $\mr{Emb}^\mr{lf}(M,N)$, are the locally flat embeddings defined above.
\item The third, $\mr{Emb}^\mr{plf}(M,N)$, are the \emph{pointwise locally flat embeddings}. That is, the $k$-simplices are maps $f: \Delta^k \times M \to \Delta^k \times N$ over $\Delta^k$ that are a homeomorphism onto their image and such that for all $(m,t) \in \Delta^k \times M$ there exists an open neighborhood $V$ of $m$ such that there is an embedding $\{t\} \times V \times \bR^{n-m} \to \{t\} \times N$ extending the original embedding.
\item The fourth, $\mr{Emb}^\mr{w}(M,N)$, are the \emph{wild embeddings}. That is, the $k$-simplices are maps $f: \Delta^k \times M \to \Delta^k \times N$ over $\Delta^k$ that are a homeomorphism onto their image.
\end{enumerate}

These variants are weakly equivalent in codimension at least 3. More precisely, from parametrized isotopy extension (\IE) it follows that $\mr{Emb}^\mr{glf}(M,N) = \mr{Emb}^\mr{lf}(M,N)$. If $m-n \geq 3$ and $m \geq 5$, then the remaining inclusions are weak equivalences by the Appendix of \cite{lashofembeddings}, which deduces it from a result of {\v{C}}ernavski{\u\i} \cite{cernavskiiemb}. 
\end{remark}

We now prove several useful properties of locally flat embeddings. It will be important that $\mr{Emb}^\mr{lf}$ is a Kan complex. A proof of this requires a glueing lemma for families of locally flat embeddings. 


\begin{lemma}\label{lem.lfglue} Let $B$ and $B'$ be manifolds with boundary, and $f: M \to N$ and $f': M \to N$ be locally flat embeddings over $B$ and $B'$ respectively. Suppose that $D \hookrightarrow \partial B$ and $D \hookrightarrow \partial B'$ are locally flat codimension $0$ submanifolds, and $f|_D = f'|_D$. Then $f \cup_D f'$ is a locally flat embedding over $B \cup_D B'$.\end{lemma}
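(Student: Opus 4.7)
The plan is to verify local flatness of $f \cup_D f'$ pointwise. For a point $(t,m) \in (B \cup_D B') \times M$ with $t$ not on $D$, the tube map provided by local flatness of $f$ (respectively $f'$) on its half of the glued base works directly. Thus the interesting case is $(t,m) \in D \times M$, where $t$ becomes an interior point of $B \cup_D B'$ when $t$ lies in the interior of $D$, and a boundary point when $t \in \partial D$; the two situations are handled in the same way after choosing a collar of $\partial D$ in $D$, so I will assume the former.

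By Brown's collar neighborhood theorem for topological manifolds applied to the locally flat inclusions $D \hookrightarrow \partial B$ and $D \hookrightarrow \partial B'$, one obtains a chart $D \times (-\epsilon,\epsilon) \hookrightarrow B \cup_D B'$ near $t$, with the $B$-side identified with $s \geq 0$ and the $B'$-side with $s \leq 0$. Local flatness of $f$ and $f'$ at $(t,m)$ then provides, after shrinking to common neighborhoods $U_0 \subset D$ of $t$ and $V \subset M$ of $m$, tube maps
\[\phi_B,\ \phi_{B'} \colon U_0 \times [0,\delta) \times V \times \bR^{n-m} \to D \times [0,\delta) \times N\]
over the respective $B$- and $B'$-side collars. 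Their restrictions to $s = 0$ are both tube maps over $U_0$ for the common embedding $f|_D = f'|_D$, but these need not coincide.

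The crux of the proof is to modify one of them, say $\phi_{B'}$ to $H \circ \phi_{B'}$, where $H$ is a self-homeomorphism of $U_0 \times [0,\delta) \times N$ over $U_0 \times [0,\delta)$ with the properties that (i) $H|_{s=0}$ equals the discrepancy $\phi_B|_{s=0} \circ (\phi_{B'}|_{s=0})^{-1}$, (ii) $H$ fixes $f'(U_0 \times [0,\delta) \times V)$ pointwise, and (iii) $H$ is the identity for $s$ near $\delta$. With such an $H$, the piecewise formula
\[\Phi(d,s,v,r) = \begin{cases} \phi_B(d,s,v,r) & \text{if } s \geq 0, \\ (H \circ \phi_{B'})(d,-s,v,r) & \text{if } s \leq 0, \end{cases}\]
continuously extends $f \cup_D f'$ to a tube on $U_0 \times (-\delta,\delta) \times V \times \bR^{n-m}$; it is injective and open because distinct values of $s$ lie in distinct fibers of $D \times (-\epsilon,\epsilon)$ and each piece is already an open embedding.

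The main obstacle is the construction of $H$, which is essentially a topological version of uniqueness of parametrized tubular neighborhoods. This is where parametrized isotopy extension (\IE) enters the argument: view the two fiber tube parametrizations at $s = 0$ as the endpoints of a straight-line isotopy of local open embeddings extending $f|_{U_0}$, realize this isotopy as an ambient isotopy of homeomorphisms of a neighborhood in $U_0 \times N$ over $U_0$ (rel zero section) via \IE, and then Alexander-damp the resulting isotopy along the collar parameter $s \in [0,\delta)$ so that $H$ becomes the identity near $s = \delta$.
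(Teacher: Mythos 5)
Your reduction to a collar $D\times(-\epsilon,\epsilon)$ around the seam and the observation that only points over $D$ are problematic match the start of the paper's proof, but the key step is where your argument breaks. Everything hinges on the correcting homeomorphism $H$, and its construction is not justified. At $s=0$ you must connect two different tube parametrizations $\phi_B|_{s=0}$ and $\phi_{B'}|_{s=0}$ of the \emph{same} embedding $f|_D=f'|_D$ by an isotopy rel the zero section, and you propose a ``straight-line isotopy of local open embeddings.'' In the topological category there is no such straight line: the target $N$ has no linear structure, and the discrepancy $(\phi_B|_{s=0})^{-1}\circ\phi_{B'}|_{s=0}$ need not preserve the product structure $V\times\bR^{n-m}$, so linear interpolation in the fiber coordinate is not available either (nor is the smooth ``shrink to the derivative'' trick). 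What you are implicitly invoking is a local uniqueness statement for topological tubes/normal data around a locally flat embedding, which is a genuinely nontrivial point --- recall that normal microbundles in $\Top$ need not even be unique in general \cite{rsnormal} --- and it is not among the inputs \IE, \KT, etc.\ in an off-the-shelf form. There is also a smaller typechecking issue: the prescribed value $H|_{s=0}=\phi_B|_{s=0}\circ(\phi_{B'}|_{s=0})^{-1}$ is only defined on the open image of $\phi_{B'}|_{s=0}$, not on all of $U_0\times N$, so ``self-homeomorphism of $U_0\times[0,\delta)\times N$'' with that boundary value needs extra care; and applying Theorem \ref{thm.ie} to realize an isotopy of tubes ambiently requires compactness/support hypotheses that the full $\bR^{n-m}$-fibers do not satisfy without first restricting to a disk subbundle.

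The paper's proof is arranged precisely so that no two tubes ever have to be compared. It applies \IE\ not to tube maps but to the families $f$ and $f'$ themselves: over the local models $(\Delta^k\times[0,1],\Delta^k\times\{0\})$ and $(\Delta^k\times[-1,0],\Delta^k\times\{0\})$ one gets ambient families $\phi,\phi'$ of homeomorphisms of $N$ that are the identity over the seam $\Delta^k\times\{0\}$ and carry the slice embedding $f(d,0)$ to $f(d,s)$ resp.\ $f'(d,s)$. One then chooses a \emph{single} local flatness chart $F$ for the common slice embedding and transports it by $\phi$ on one side and $\phi'$ on the other; since both are the identity at the seam, the two halves glue continuously with no correction term. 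If you want to salvage your outline, replace the construction of $H$ by this trivialize-and-transport step; as written, the existence of $H$ is exactly the unproved content.
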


\begin{proof}Since locally flatness is a local condition, without loss of generality $(B,D)$ is $(\Delta^k \times [0,1],\Delta^k \times \{0\})$ and $(B',D)$ is $(\Delta^k \times [-1,0],\Delta^k \times \{0\})$. By two applications of the parametrized isotopy extension theorem  (\IE) there exists two maps \begin{align*}\phi \colon \Delta^k \times [0,1] &\to \mr{TOP}(N) \\
\phi \colon \Delta^k \times [-1,0] &\to \mr{TOP}(N)\end{align*}
that are the identity on $\Delta^k \times \{0\}$ and such that for $(d,s) \in \Delta^k \times [0,1]$ we have $f(d,s) = \phi(d,s)(f(d,0))$ and for $(d,s) \in \Delta^k \times [-1,0]$ we have $f'(d,s) = \phi'(d,s)(f(d,0))$.

Take an $m_0 \in M$, open neighborhoods $U$ of $d \in \Delta^k$ and $V$ of $m_0$ in $M$ and map $F: U \times V \times \bR^{n-m} \to \Delta^k \times N$ over $\Delta^k$ extending $f|_{U \times V}$. Then the map exhibiting locally flatness of $f \cup_{D} f'$ is given by
\begin{align*}\bar{F}: U \times [-1,1] \times V \times \bR^{n-m}  &\to \Delta^k \times [-2,1] \times N \\
(d,t,m,s)&\mapsto \begin{cases}(d,t,\phi(d,t)(F(d,m,s))) & \text{if $t \geq 0$} \\
(d,t,\phi'(d,t)(F(d,m,s))) & \text{if $t < 0$}\end{cases}\end{align*}

\end{proof}

\begin{lemma}\label{lem.embkan} We have that $\mr{Emb}^\mr{lf}(M,N)$ is a Kan complex.\end{lemma}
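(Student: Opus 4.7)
The plan is to verify the horn-filling condition directly. Given a simplicial map $g \colon \Lambda^k_i \to \mr{Emb}^\mr{lf}(M, N)$, I would first repackage it as a single locally flat embedding over the geometric realization of the horn. The data of $g$ is a compatible family of locally flat embeddings (over $\Delta^{k-1}$) associated to each top face $d_j \Delta^k$ with $j \neq i$, agreeing on common subfaces. Neighbouring top faces meet along a $(k-2)$-simplex, which sits as a locally flat codimension-$0$ submanifold of either boundary, so iterated application of Lemma \ref{lem.lfglue} glues the pieces into a single locally flat embedding $f \colon |\Lambda^k_i| \times M \to |\Lambda^k_i| \times N$ over the horn.

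Next I would fix a continuous retraction $r \colon \Delta^k \to |\Lambda^k_i|$, e.g.\ radial projection from an interior point of the missing face $d_i \Delta^k$. Writing $f(s,m) = (s, f_s(m))$, define
\[\tilde f \colon \Delta^k \times M \to \Delta^k \times N, \qquad (t,m) \mapsto (t, f_{r(t)}(m)).\]
Because $r$ is the identity on $|\Lambda^k_i|$, the map $\tilde f$ is a continuous map over $\Delta^k$ extending $f$, so it is a candidate $k$-simplex of $\mr{Emb}^\mr{lf}(M,N)$ filling the horn. The only thing left is to verify local flatness over $\Delta^k$.

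For the verification, fix $(t_0, m_0) \in \Delta^k \times M$. Local flatness of $f$ at $(r(t_0), m_0)$ supplies a neighbourhood $U' \ni r(t_0)$ in $|\Lambda^k_i|$, a neighbourhood $V \ni m_0$ in $M$, and a map $F \colon U' \times V \times \bR^{n-m} \to |\Lambda^k_i| \times N$ over $|\Lambda^k_i|$ extending $f|_{U' \times V}$ and restricting to a homeomorphism onto its image. Choose an open $U \ni t_0$ in $\Delta^k$ with $r(U) \subset U'$, and set
\[\tilde F \colon U \times V \times \bR^{n-m} \to \Delta^k \times N, \qquad (t,m,y) \mapsto (t, \pi_N F(r(t), m, y)).\]
This is continuous, is over $\Delta^k$, and extends $\tilde f|_{U \times V}$. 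Its inverse on the image is $(t, n) \mapsto (t, \pi_{V \times \bR^{n-m}} F^{-1}(r(t), n))$, continuous by continuity of $r$ and of $F^{-1}$, so $\tilde F$ is a homeomorphism onto its image as required.

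The only real subtlety is the first step, translating the simplicial-set horn into an embedding over the geometric realization of the horn via Lemma \ref{lem.lfglue} (which itself is where isotopy extension enters). Once that is done, the pullback-by-retraction argument goes through because the local bundle extension witnessing local flatness of $f$ can itself be pulled back along $r$, preserving the defining conditions.
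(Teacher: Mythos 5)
Your proposal is correct and follows the same route as the paper's own (very terse) proof: assemble the horn data into a locally flat embedding over $\Lambda^k_i$ via Lemma \ref{lem.lfglue}, then pull back along a retraction $\Delta^k \to \Lambda^k_i$, noting the pullback is again locally flat. Your write-up simply supplies the details (the iterated gluing and the explicit check that local flatness pulls back) that the paper leaves to the reader.
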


\begin{proof}We need to check filling of horns $\Lambda^n_i \hookrightarrow \Delta^n$. By Lemma \ref{lem.lfglue} we get a locally flat embedding over $\Lambda^n_i$ that we need to extend to $\Delta^n$. We do this by simply pulling back along a retraction $\Delta^n \to \Lambda^n_i$, which can easily be seen to be locally flat.\end{proof}

Next we discuss the behavior of locally flat embeddings under products and use this to prove the weak Whitney embedding theorem.

\begin{lemma}\label{lem.lfprod} If $f: M \to N$ is locally flat over $B$ and $f': M \to N'$ is any map over $B$, then $(f,f'): M \to N \times N'$ is locally flat over $B$ as well.\end{lemma}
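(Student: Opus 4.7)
The plan is to construct the required local chart on $N \times N'$ explicitly, using the local flatness of $f$ in the $N$-direction and a coordinate chart on $N'$ to fill in the extra directions. Fix a point $(t_0, m_0) \in B \times M$, and write $d = \dim M$, $n = \dim N$, $n' = \dim N'$. Note first that $(f, f')$ is automatically a homeomorphism onto its image, because projection to $B \times N$ followed by $f^{-1}$ gives a continuous inverse, so only the local flatness at $(t_0, m_0)$ needs to be verified.

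First, I would apply the hypothesis that $f$ is locally flat over $B$ to produce open neighborhoods $U_1 \ni t_0$, $V_1 \ni m_0$, and a map
\[
F \colon U_1 \times V_1 \times \bR^{n-d} \to B \times N
\]
over $B$, extending $f|_{U_1 \times V_1}$ and realizing a homeomorphism onto its image. Since $N'$ is a topological manifold, pick a coordinate chart $\phi' \colon W' \xrightarrow{\cong} \bR^{n'}$ with $W' \subset N'$ an open neighborhood of $\pi_{N'} f'(t_0, m_0)$ (shrinking to a coordinate ball lets us arrange $\phi'(W') = \bR^{n'}$). By continuity of $f'$, shrink to $U \subset U_1$ and $V \subset V_1$ so that $\pi_{N'} f'(U \times V) \subset W'$.

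Second, I would define the candidate extension $\tilde{F} \colon U \times V \times \bR^{n-d} \times \bR^{n'} \to B \times N \times N'$ over $B$ by
\[
\tilde{F}(t, m, s, s') = \bigl(t,\ \pi_N F(t, m, s),\ (\phi')^{-1}\bigl(\phi'(\pi_{N'} f'(t, m)) + s'\bigr)\bigr).
\]
Setting $s = 0$ and $s' = 0$ recovers $(t, \pi_N f(t, m), \pi_{N'} f'(t, m)) = (f, f')(t, m)$, so $\tilde{F}$ extends $(f, f')|_{U \times V}$. Continuity of $\tilde{F}$ is clear. To see that $\tilde{F}$ is a homeomorphism onto its image, I would write down the inverse directly: given $(t, y, y')$ in the image, let $(t, m, s) = F^{-1}(t, y)$ using that $F$ is an embedding, and then set $s' = \phi'(y') - \phi'(\pi_{N'} f'(t, m))$; the resulting map is manifestly continuous and inverts $\tilde{F}$.

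I do not anticipate a serious obstacle; the content is essentially bookkeeping. The one point worth highlighting is that we do not need a normal-bundle structure on $f'$: the chart on $N'$ only needs to be recentered (by translation in $\bR^{n'}$) at the moving point $\pi_{N'} f'(t, m)$, which uses merely the manifold structure of $N'$ together with continuity of $f'$.
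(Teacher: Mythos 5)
Your proof is correct and takes essentially the same route as the paper: the paper also shrinks $U,V$ so that $f'$ lands in a chart of $N'$ and then defines the extension $\tilde{\phi}(u,v,s,r) = (u,\phi(u,v,s),f'(u,v)+r)$, with the translation taken in that chart, exactly as your $\tilde{F}$. Your additional explicit checks (that $(f,f')$ is a homeomorphism onto its image and that $\tilde{F}$ has a continuous inverse) are fine and merely spell out what the paper leaves implicit.
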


\begin{proof}Fix $(t,m) \in B \times M$ and let $U$ be neighborhood of $t$, $V$ a neighborhood of $m$, $\phi: U \times V \times \bR^{n-m} \to B \times N$ a map which extends $f|_{U \times V}$ be neighborhoods and is a homeomorphism onto its image. By shrinking $U$ and $V$ we can assume that $f'|_{U \times V}$ has image in a chart $\bR^{n'}$. To exhibit locally flatness of $(f,f')$ at $(t,m)$, we take $U$, $V$ as before and the map 
\[\tilde{\phi}(u,v,s,r) = (u,\phi(u,v,s),f'(u,v)+r)\]
where we write $B \times \bR^{n+n'-m}  = B \times \bR^{n-m} \times \bR^{n'}$ and the addition ``$+$'' in the second coordinate is given by the chart $\bR^{n'}$.\end{proof}

\begin{corollary}\label{cor.whitney} Every compact topological manifold $M$ admits a locally flat embedding into $\bR^N$ for $N$ sufficiently large.\end{corollary}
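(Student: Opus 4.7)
The plan is to run the standard partition-of-unity construction for Whitney embedding, modified to respect local flatness by exploiting Lemma \ref{lem.lfprod}. The key idea is to include the value of each bump function as a separate coordinate of the target, so that near any point some block of coordinates restricts to a genuine chart map; Lemma \ref{lem.lfprod} then upgrades ``one locally flat factor plus arbitrary other factors'' to local flatness of the whole embedding.

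First I would use compactness of $M$ to choose finitely many charts $\phi_i \colon U_i \to \bR^n$ for $i = 1, \ldots, k$, together with compact subsets $V_i \subset U_i$ whose interiors still cover $M$, and continuous bump functions $\rho_i \colon M \to [0,1]$ with $\rho_i \equiv 1$ on $V_i$ and $\mr{supp}(\rho_i) \subset U_i$ (here one uses that topological manifolds are metrizable, hence normal and admit Urysohn-type functions). I then define $f_i \colon M \to \bR^{n+1}$ by setting $f_i(x) = (\rho_i(x)\phi_i(x), \rho_i(x))$ for $x \in U_i$ and $f_i(x) = (0,0)$ otherwise; the vanishing of $\rho_i$ outside $U_i$ makes this continuous. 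Finally put $f = (f_1, \ldots, f_k) \colon M \to \bR^{k(n+1)}$.

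Injectivity is the classical check: if $f(x) = f(y)$, pick $i$ with $\rho_i(x) > 0$; the second coordinate of $f_i$ forces $\rho_i(y) = \rho_i(x) > 0$ hence $y \in U_i$, and the first coordinate then gives $\phi_i(x) = \phi_i(y)$, so $x = y$. Since $M$ is compact and $\bR^N$ Hausdorff, $f$ is automatically a homeomorphism onto its image. For local flatness at a given point $p \in M$, I would choose $i$ with $p \in \mr{int}(V_i)$, so that $\rho_i \equiv 1$ on some neighborhood $W$ of $p$. On $W$, the $i$-th block of $f$ equals $(\phi_i, 1)$, and since $\phi_i$ is a homeomorphism onto an open subset of $\bR^n$, the restriction $f_i|_W$ is visibly a locally flat embedding of $W$ into $\bR^{n+1}$ (the extension in the remaining direction of $\bR^{n+1}$ is $(x,s) \mapsto (\phi_i(x), 1+s)$). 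Now Lemma \ref{lem.lfprod}, applied to $\bR^{k(n+1)} = \bR^{n+1} \times \bR^{(k-1)(n+1)}$ with $f_i$ in the first factor and the remaining $f_j$'s in the second, gives local flatness of $f$ at $p$.

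There is no serious obstacle here; the only subtlety, which is the whole reason this is stated as a corollary of Lemma \ref{lem.lfprod} rather than proved directly, is that in the topological category a naive partition-of-unity sum has no obvious normal bundle. Separating the two roles of $\rho_i$ (as a weight on $\phi_i$ and as its own coordinate) ensures that some block of the embedding is, on a neighborhood of each point, literally a chart, and Lemma \ref{lem.lfprod} then does the local-flatness bookkeeping for free.
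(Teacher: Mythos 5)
Your proof is correct, and at the macro level it is the same strategy as the paper's: cover $M$ by finitely many charts, weight each chart by a scalar function whose value is also recorded in an extra coordinate, and invoke Lemma~\ref{lem.lfprod} to propagate local flatness from one $(n+1)$-block to the whole $k(n+1)$-tuple. The one genuine point of divergence is in how local flatness of the distinguished block is established. The paper works with an honest partition of unity $(f_i)$, so at a given $p$ it only knows $f_i(p)>0$ for some $i$ and the block $(f_i\psi_i, f_i)$ is nowhere locally constant; it handles this by post-composing with the homeomorphism $(x,t)\mapsto(x/t,t)$ of $\bR^m\times(0,\infty)$ to turn the block into the graph $(\psi_i, f_i)$, whose local flatness is itself another application of Lemma~\ref{lem.lfprod} (with the identity in the first slot). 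You instead shrink to compact $V_i$ with $\mr{int}(V_i)$ still covering and take $\rho_i\equiv 1$ on $V_i$, so that near any $p$ the distinguished block is literally $(\phi_i,1)$ and local flatness is immediate, at the cost of having to set up the shrunken cover (a standard normality argument you correctly flag). Both routes are fine: yours invokes Lemma~\ref{lem.lfprod} once and keeps the local-flatness check one-step; the paper's uses the off-the-shelf partition of unity but pays with the $(x,t)\mapsto(x/t,t)$ reparametrization and a second use of the lemma.
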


\begin{proof}By compactness there is a finite collection of charts $(U_i,\psi_i: U_i \cong \bR^m)_{1 \leq i \leq k}$ covering $M$ and by paracompactness there is a partition of unity $f_i$ subordinate to $\{U_i\}$. Let $N = k(m+1)$ and let the embedding be given by
\[m \mapsto (f_1(m) \cdot \psi_1(m),\ldots,f_k(m) \cdot \psi_k(m),f_1(m),\ldots,f_k(m)))\]

This can be seen to be locally flat by applying Lemma \ref{lem.lfprod} locally: note that on $f_i^{-1}((0,1])$ the embedding $m \mapsto (f_i \psi_i,f_i)$ is locally flat: it can be regarded as an embedding $f_i^{-1}((0,1]) \to \bR^m \times (0,\infty)$ and after composing with the homeomorphism $\bR^m \times (0,\infty) \to \bR^m \times (0,\infty)$ given by $(x,t) \mapsto (x/t,t)$ it is a restriction of the inclusion $\bR^m \hookrightarrow \bR^m \times (0,\infty)$ given by $m \mapsto (m,f_i(m))$ and one can use Lemma \ref{lem.lfprod} again.
\end{proof}

\subsection{Parametrized isotopy extension} Isotopy extension is a fundamental theorem in manifold theory, which often allows one to reduce questions about families of manifolds to questions about single manifolds.

The following is an amalgam of \cite{leesimm} (which only proves it in Euclidean spaces), \cite{cernavskiicontr}, Corollary 1.4 of \cite{edwardskirby} (which only proves the 1-parameter case), and the results of page 19 of \cite{burghelealashof}.

\begin{theorem}[Lees, {\v{C}}ernavski{\u\i}, Edwards-Kirby, Burghelea-Lashof] \label{thm.ie} Suppose $M$ is a compact topological manifold, $N$ is a topological manifold, and we are given a locally flat embedding $f$ fitting into a commutative diagram
\[\xymatrix{\Delta^k \times M \ar[dr] \ar@{^(->}[rr]^f & & \Delta^k \times N \ar[dl] \\
& \Delta^k & }\]
then there exists a homeomorphism $\psi$ fitting into a commutative diagram
\[\xymatrix{\Delta^k \times N \ar[dr] \ar_\cong[rr]^\psi & & \Delta^k \times N \ar[dl] \\
& \Delta^k & }\]
such that if $\Delta^0 \subset \Delta^n$ denotes the vertex $\{0\}$, then $\psi|_{\Delta^0} = \mr{id}_N$ and $f = \psi \circ (f|_{\Delta^0}): \Delta^k \times M \hookrightarrow \Delta^k \times N$.
\end{theorem}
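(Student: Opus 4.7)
The plan is to assemble Theorem~\ref{thm.ie} from the cited foundational work by a double localization argument, first in the parameter space $\Delta^k$ and then in the target $N$.

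For the parameter direction, I would take an iterated barycentric subdivision of $\Delta^k$ fine enough that over each small subsimplex the embedding $f$ varies only slightly from its value at a chosen vertex, and build $\psi$ inductively, one subsimplex at a time, starting from $\psi|_{\{0\}} = \mr{id}_N$. At each inductive step the problem reduces to: given a family of locally flat embeddings over a small cube that agrees with a reference embedding on one face, extend it to a family of self-homeomorphisms of $N$ that is the identity on that face.

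For the target direction, I would use the compactness of $M$ together with Lemma~\ref{lem.lfprod} and the glueing Lemma~\ref{lem.lfglue} to cover $f(\{0\} \times M)$ by finitely many coordinate charts of $N$ of the form $\bR^n$ and glue the locally constructed ambient isotopies. After sufficient refinement of $\Delta^k$, over each small subsimplex only one such chart is needed, reducing everything to the Euclidean local model: a family over a small cube of locally flat embeddings into $\bR^n$ close to a fixed reference, to be extended to a family of self-homeomorphisms of $\bR^n$.

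The Euclidean local model is precisely Lees's theorem in \cite{leesimm}, whose engine is the Kirby torus trick: wrap a large ball in $\bR^n$ up to a torus, lift to the universal cover, construct the extension there, and push it back down. For a single parameter the assembly of the local Euclidean result into a global ambient isotopy of $N$ is Corollary~1.4 of \cite{edwardskirby}, which crucially uses \v{C}ernavski\u{\i}'s local contractibility theorem for $\Top(n)$ \cite{cernavskiicontr} to make continuous choices of extensions. The upgrade to general $k$-parameter families is the Burghelea-Lashof construction from page~19 of \cite{burghelealashof}, which iterates the $1$-parameter case along the edges of a simplicial subdivision while carefully controlling the supports of the isotopies produced at each stage.

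The principal obstacle is the continuous dependence on the parameter: in the smooth category one integrates a time-dependent vector field obtained from a partition of unity, but no such infinitesimal generator exists in the topological category, so the continuous assembly must instead be carried out using \v{C}ernavski\u{\i}'s local contractibility. Arranging in addition that $\psi|_{\{0\}} = \mr{id}_N$ is then a routine support-control issue built into the Edwards-Kirby construction.
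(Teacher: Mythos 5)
Your proposal follows essentially the same route as the paper, which offers no independent proof of Theorem~\ref{thm.ie} but presents it exactly as an amalgam of \cite{leesimm} (the Euclidean/local case), \cite{cernavskiicontr} and \cite{edwardskirby} (local contractibility and the $1$-parameter isotopy extension, Corollary~1.4), and page~19 of \cite{burghelealashof} (the upgrade to $k$-parameter families, i.e.\ that restriction to a locally flat compact submanifold is a Kan fibration); your localization-and-assembly sketch is a reasonable reconstruction of how those references combine. The only caveat is a minor attribution slip: Edwards--Kirby do not quote \v{C}ernavski\u{\i}'s local contractibility theorem but reprove it by their own torus-trick deformation argument, from which their Corollary~1.4 follows directly.
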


\begin{remark}There is also a relative version, if one is given an extension to a homeomorphism on an open neighborhood of a compact subset of a manifold. This works for manifolds with boundary as long as the compact subset contains the boundary.
\end{remark}

The following Corollary appears in \cite{burghelealashof}, directly following from unwinding the definitions.
\begin{corollary}\label{cor.restrkan} Fix a locally flat submanifold $M \subset N$, then the evaluation map $\Top(N) \to \mr{Emb}^\mr{lf}(M,N)$ is a Kan fibration. There is also a relative version.
\end{corollary}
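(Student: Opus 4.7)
The plan is to verify the Kan fibration property directly by reducing every horn-filling problem to parametrized isotopy extension. A horn-filling diagram against $\Lambda^n_i \hookrightarrow \Delta^n$ unwinds, via Lemma~\ref{lem.lfglue}, to the data of a locally flat embedding $f \colon \Delta^n \times M \hookrightarrow \Delta^n \times N$ over $\Delta^n$ together with a fibrewise homeomorphism $\varphi \colon \Lambda^n_i \times N \to \Lambda^n_i \times N$ over $\Lambda^n_i$, satisfying $\varphi|_{\Lambda^n_i \times M} = f|_{\Lambda^n_i \times M}$. The task is to produce a fibrewise homeomorphism $\tilde\varphi \colon \Delta^n \times N \to \Delta^n \times N$ extending $\varphi$ and restricting to $f$ on $\Delta^n \times M$.

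First I would reparametrize the pair. Projection from the barycenter of the omitted $i$-th face of $\Delta^n$ exhibits $\Delta^n$ as the cone on $\Lambda^n_i$, so there is a homeomorphism of pairs
\[ (\Delta^n, \Lambda^n_i) \;\cong\; (D^{n-1} \times [0,1],\, D^{n-1} \times \{0\}). \]
In this guise the data becomes a $D^{n-1}$-parametrized isotopy $f$ of embeddings of $M$ into $N$ together with an ambient realization $\varphi$ at time $t = 0$, and extending it to an ambient isotopy over all of $D^{n-1} \times [0,1]$ is the content of parametrized isotopy extension.

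Next I would apply Theorem~\ref{thm.ie} to the family $f$ to produce a fibrewise homeomorphism
\[ \Psi \colon D^{n-1} \times [0,1] \times N \;\longrightarrow\; D^{n-1} \times [0,1] \times N \]
over $D^{n-1} \times [0,1]$ satisfying $\Psi|_{t=0} = \mr{id}_N$ and $\Psi \circ f|_{t=0} = f$. The literal statement of Theorem~\ref{thm.ie} concerns a simplex parameter with extension from a single vertex, whereas what is needed here is the polyhedral version extending from a codimension-zero face; this is precisely the form of parametrized isotopy extension proved in \cite{burghelealashof}. Equivalently, one may triangulate $D^{n-1} \times [0,1]$ with the slice $t = 0$ as a subcomplex and apply Theorem~\ref{thm.ie} one simplex at a time, gluing successive extensions by Lemma~\ref{lem.lfglue}. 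Given $\Psi$, the formula
\[ \tilde\varphi(s, t, x) \;:=\; \Psi\bigl(s, t,\, \varphi(s, 0, x)\bigr) \]
exhibits the sought lift: at $t = 0$ it recovers $\varphi$ since $\Psi|_{t=0} = \mr{id}_N$, and for $m \in M$ one computes $\tilde\varphi(s, t, m) = \Psi(s, t, f(s, 0, m)) = f(s, t, m)$ by the defining property of $\Psi$. The relative version of the corollary is obtained identically, carrying through any boundary condition that $\tilde\varphi$ should respect.

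The one non-trivial ingredient is thus the polyhedral strengthening of Theorem~\ref{thm.ie}---upgrading ``extension from a vertex of a simplex'' to ``extension from a codimension-zero face of a polytope.'' Once this refinement is granted, the displayed formula for $\tilde\varphi$ writes down the lift explicitly, which is the precise sense in which this corollary follows by unwinding the definitions.
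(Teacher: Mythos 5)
Your main construction is correct and is the ``unwinding the definitions'' the paper has in mind: identify $(\Delta^n, \Lambda^n_i)$ with $(D^{n-1}\times[0,1], D^{n-1}\times\{0\})$, apply parametrized isotopy extension to $f$ to get a fibrewise homeomorphism $\Psi$ over $D^{n-1}\times[0,1]$ with $\Psi|_{t=0}=\mr{id}$ that realizes the embedding family, and set $\tilde\varphi(s,t,x)=\Psi(s,t,\varphi(s,0,x))$. Since the paper gives no proof beyond a citation of \cite{burghelealashof}, and you cite the same source for the one ingredient not literally contained in Theorem~\ref{thm.ie}, the approaches match; you are right to flag that the vertex form of Theorem~\ref{thm.ie} does not directly suffice.

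Two comments. First, the aside ``equivalently, one may triangulate $D^{n-1}\times[0,1]$ and apply Theorem~\ref{thm.ie} one simplex at a time, gluing by Lemma~\ref{lem.lfglue}'' does not hold up as stated: in any triangulation with $D^{n-1}\times\{0\}$ as a subcomplex, a top-dimensional simplex touching $D^{n-1}\times\{0\}$ typically meets it along an entire facet, and subsequent simplices meet the cumulative union along unions of facets; extending an already-constructed homeomorphism family over such a simplex compatibly with $f$ is exactly the horn-filling problem under discussion, so this reduction is circular. Second, the facewise strengthening can be avoided entirely by a small algebraic trick, so that only the stated Theorem~\ref{thm.ie} is needed. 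Pick a vertex $v\in\Lambda^n_i$ and apply Theorem~\ref{thm.ie} at $v$ to obtain $\psi\colon\Delta^n\times N\to\Delta^n\times N$ over $\Delta^n$ with $\psi|_v=\mr{id}$ and $f(t)=\psi(t)\circ f(v)$. Set $\tilde\varphi_0(t):=\psi(t)\circ\varphi(v)$; this evaluates to $f$ on $M$ and agrees with $\varphi$ at $v$, but need not agree with $\varphi$ elsewhere on $\Lambda^n_i$. The correction $\alpha(t):=\tilde\varphi_0(t)^{-1}\circ\varphi(t)$, defined for $t\in\Lambda^n_i$, fixes $M$ pointwise (both factors restrict to $f(t)$ on $M$), so $\alpha$ is a simplicial map $\Lambda^n_i\to\Top(N,M)$ into the simplicial group of homeomorphisms fixing $M$ pointwise. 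Since simplicial groups are Kan, $\alpha$ extends to $\tilde\alpha\colon\Delta^n\to\Top(N,M)$, and $\tilde\varphi:=\tilde\varphi_0\cdot\tilde\alpha$ is the desired lift: it restricts to $\varphi$ on $\Lambda^n_i$ and evaluates to $f$ on $\Delta^n\times M$. This carries over verbatim to the relative version.
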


Though we will not use it, it is also a corollary that $\mr{Emb}^\mr{lf}(M,N) \to \mr{Emb}^\mr{lf}(M',N)$ is a Kan fibration if $M' \subset M$ is a locally flat submanifold \cite{lashofembeddings}.

\subsection{Microbundles}  To explain immersion theory and transversality, we need to discuss microbundles. In the topological setting these take the role that vector bundles play in the smooth setting. The basic reference is \cite{milnormicrobundles}.

\begin{definition}\label{def.mb} An $r$-dimensional \emph{microbundle} $\xi = (X,i,p)$ on a manifold $M$ is a space $X$ and a pair of maps $M \overset{i}{\longrightarrow} X \overset{p}{\longrightarrow} M$ such that 
\begin{enumerate}[(i)]
	\item $p \circ i =\mr{id}_M$,
	\item for all $m \in M$ there exists an open neighborhood $U$ of $m$ and $V$of $i(m)$ and a homeomorphism $\phi: V \cong U \times \bR^r$ making the following diagram commute
	\[\xymatrix{& V \ar[dd]^\cong_\phi \ar[rd]^p &  \\
	U \ar[ur]^i \ar[dr]_{- \times 0} & & U\\
	& U \times \bR^r \ar[ru]_{\pi_1} &}\]
\end{enumerate}\end{definition}

\begin{example}The definition of a manifold guarantees that 
\[M \overset{\Delta}{\longrightarrow} M \times M \overset{\pi_1}{\longrightarrow} M\]
is a microbundle of the same dimension as $M$. This is called the \emph{tangent microbundle}. This name is justified by the fact that if $M$ has a smooth structure, then the underlying microbundle of $TM$ is isomorphic to $\tau M$ (we will define isomorphisms of microbundle later). \end{example}

\begin{definition}Suppose $\xi = (X,i,p)$, $\xi' = (X',i',p')$ are microbundles over $M$, of dimension $r \leq r'$ respectively. A \emph{microbundle map} $\xi \to \xi'$ is an equivalence class of maps $F: U \to U'$ with $U \subset X$ and $U' \subset X'$ open subsets containing $i(M)$ and $i'(M)$ respectively, such that 
\begin{enumerate}[(i)]
\item $F(U) \subset U'$,
\item $F \circ i = i'$,
\item $p' \circ F = p$,
\item for all $m \in M$ there exists an open neighborhood $U$ of $m$, $V$ of $i(m)$, $V'$ of $i(m)$, homeomorphisms $\phi: V \cong U \times \bR^r$, $\phi': V' \cong U \times \bR^{r'}$ as in property (ii) in Definition \ref{def.mb}, and a homeomorphism $\tilde{F}: U \times \bR^{r'} \to U \times \bR^{r'}$ such that the $\tilde{F}|_{U \times \bR^r} = \phi' \circ F \circ \phi^{-1}$.
\end{enumerate}
The equivalence relation is that one is allowed to shrink $U$ and $U'$.\end{definition}

An isomorphism of microbundles is an microbundle map with an inverse. Using Theorem \ref{thm.kt}, Kister proved every $r$-dimensional microbundle contains an $\bR^r$-bundle with structure group $\Top(r)$, unique up to homeomorphism preserving the fibers and the zero section \cite{kister}. In particular it is unique up to microbundle isomorphism.

\begin{definition}\label{def.mbmaps} Let $\xi = (X,i,p)$ be a $r$-dimensional microbundle over $M$ and $X' = (X',i',p')$ a $r'$-dimensional microbundle over $M'$. The \emph{simplicial set of microbundle maps} $\mr{Bun}(\xi,\xi')$ has $k$-simplices pairs of a map $f$ fitting into a commutative diagram
\[\xymatrix{\Delta^k \times M \ar[rr]^f \ar[rd] & & \Delta^k \times M' \ar[ld] \\
& \Delta^k & }\]
and a microbundle map $F: \pi^* X \to (\pi' \circ f')^* X'$ (over $\Delta^k \times M$), where $\pi: \Delta^k \times M \to M$ and $\pi': \Delta^k \times M' \to M'$ are the projections.\end{definition}

These spaces of bundle maps are essentially homotopy-theoretic objects, see Essay V of \cite{kirbysiebenmann}, because the restriction maps are fibrations. More precisely, they are weakly equivalent to section spaces:

\begin{proposition}\label{prop.sectident} Let $\xi = (X,i,p)$ be a $r$-dimensional microbundle over $M$ and $X' = (X',i',p')$ a $r'$-dimensional microbundle over $M'$. Using Kister's theorem (\KT), pick an $\bR^r$-bundle $Z$ in $\xi$ and an $\bR^{r'}$-bundle $Z'$ in $\xi$. Then the map $\mr{Bun}(\xi,\xi') \to \mr{Map}(M,M')$ has homotopy fiber over $f:M \to N$ given by the space of sections of the fiber bundle over $M$ with fiber at $m$ the locally flat embeddings $Z_m \cong \bR^m \hookrightarrow \bR^n \cong Z'_{f(m)}$.\end{proposition}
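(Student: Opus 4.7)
The plan is to use Kister's theorem (\KT) to replace the microbundles by the chosen $\bR^r$-bundle $Z \subset \xi$ and $\bR^{r'}$-bundle $Z' \subset \xi'$, and then identify bundle maps with sections.

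First I would introduce an auxiliary simplicial set $\mr{FEmb}(Z,Z')$ whose $k$-simplices are pairs $(f,F)$ where $f \colon \Delta^k \times M \to \Delta^k \times M'$ is a map over $\Delta^k$ and $F \colon \pi^* Z \to (\pi' \circ f)^* Z'$ is a bundle map over $\Delta^k \times M$ whose restriction to each fiber $\bR^r \to \bR^{r'}$ is a locally flat embedding. There is a natural forgetful map $\mr{Bun}(\xi,\xi') \to \mr{FEmb}(Z,Z')$ obtained by taking a representative of a microbundle map and restricting (up to shrinking the open sets) to the chosen $\bR^r$-bundles; conversely a fiberwise embedding gives a microbundle map on an open neighborhood of the zero section. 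Kister's theorem (\KT), in the parametrized form that shows $\Top(r) \to \mr{Emb}(\bR^r,\bR^r)$ is a weak equivalence, together with a standard naturality argument, implies this forgetful map is a weak equivalence (the contractibility of the space of choices of embedded $\bR^r$-bundle inside a microbundle up to isomorphism matches on both sides).

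Next I would show the projection $p \colon \mr{FEmb}(Z,Z') \to \mr{Map}(M,M')$ is a Kan fibration. Given a horn $\Lambda^n_i \to \mr{FEmb}(Z,Z')$ with extension $f \colon \Delta^n \times M \to \Delta^n \times M'$ of the underlying base map, one pulls back $Z'$ along $f$ to obtain a fiber bundle over $\Delta^n \times M$, and extends the partial fiberwise embedding $\pi^* Z|_{\Lambda^n_i \times M} \to f^* Z'|_{\Lambda^n_i \times M}$ to all of $\Delta^n \times M$ by pulling back along a retraction $r \colon \Delta^n \to \Lambda^n_i$ (such a retraction exists for every horn, and $f^*Z'$ is canonically isomorphic to $r^* f^* Z'|_{\Lambda^n_i \times M}$ only after using a bundle isomorphism; this is supplied by standard parametrized bundle theory for $\Top(r')$-principal bundles, since $\Delta^n$ is contractible).

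Finally I would identify the fiber of $p$ over $f \colon M \to M'$. By definition this fiber is the simplicial set of bundle maps $F \colon Z \to f^* Z'$ over $M$ with each fiber a locally flat embedding. Form the associated fiber bundle $E_f \to M$ with structure group $\Top(r) \times \Top(r')$ acting on $\mr{Emb}^{\mr{lf}}(\bR^r,\bR^{r'})$, induced from the $\Top(r) \times \Top(r')$-principal bundle $\mr{Fr}(Z) \times_M \mr{Fr}(f^*Z')$. Then the fiber of $p$ over $f$ is canonically isomorphic to the simplicial set of sections of $E_f \to M$, which is the claimed description.

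The main obstacle is the first step: showing $\mr{Bun}(\xi,\xi') \to \mr{FEmb}(Z,Z')$ is a weak equivalence. This requires not just the existence half of Kister's theorem but its parametrized uniqueness (over a simplex, contractibility of the space of embedded $\bR^r$-bundles in a given microbundle), and care is needed in handling the germ-equivalence relation defining microbundle maps so that one genuinely recovers fiberwise embeddings rather than just germs of them near the zero section.
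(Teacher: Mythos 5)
The paper does not actually give a proof of Proposition~\ref{prop.sectident}; it is stated with a citation to Essay~V of \cite{kirbysiebenmann} (``the restriction maps are fibrations''), so there is no in-paper argument to compare your outline against. Evaluating your proposal on its own terms: the overall architecture (pass to a space of fiberwise embeddings, show the projection to $\mr{Map}(M,M')$ is a Kan fibration, identify its fiber as the section space via the frame bundle) is a sensible route, and steps three and four are fine as sketched.

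The gap is in step one, and it is more than a matter of ``care.'' A microbundle map $\xi\to\xi'$ is by definition a germ near the zero section: an equivalence class of maps $U\to U'$ on unspecified open neighborhoods, modulo shrinking. Restricting such a germ to $Z$ does not yield a map on all of $Z$ — it only yields a map on \emph{some} open sub-$\bR^r$-bundle $V\subset Z$, and the shrinking equivalence erases any preferred choice of $V$. So the ``forgetful map'' $\mr{Bun}(\xi,\xi')\to\mr{FEmb}(Z,Z')$ you propose is not well-defined as a simplicial map. The map that \emph{is} well-defined goes the other way, $\mr{FEmb}(Z,Z')\to\mr{Bun}(\xi,\xi')$, by taking germs at the zero section, and it is this map whose homotopy-invertibility must be proved. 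Doing so requires a parametrized ``germs-to-global'' statement: that restricting global fiberwise embeddings $Z\hookrightarrow Z'$ to their germs near the zero section is a weak equivalence. This is a strengthening of Kister's theorem as stated in Theorem~\ref{thm.kt}, which only concerns equidimensional self-embeddings $\bR^r\to\bR^r$; here one needs the analogous statement for embeddings $\bR^r\hookrightarrow\bR^{r'}$ with $r\le r'$, in parametrized form and relative to the origin (microbundle maps preserve the zero section, so the fibers of the section-space bundle are really \emph{pointed} embeddings — harmless up to weak equivalence, but worth noting). You flag this issue at the end but do not resolve it, and as written the claimed equivalence hinges on a non-existent map; fixing it means reversing the direction and supplying the germ-to-global lemma explicitly.
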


The space of locally flat embeddings $\bR^m \hookrightarrow \bR^n$ is the \emph{topological Stiefel manifold} and is weakly equivalent to $\Top(n)/\Top(n,m)$, where $\Top(n,m)$ is the subgroup of homeomorphisms of $\bR^n$ fixing $\bR^m \times \{0\}$ pointwise \cite{lashofembeddings}.

\subsection{Immersion theory} A corollary of isotopy extension is immersion theory, an $h$-principle for immersions. We first describe the correct notion of immersion.

\begin{definition}\begin{enumerate}[(i)]
\item A \emph{locally flat immersion} of $M$ into $N$ is a continuous map $f: M \to N$ such that for all $m \in M$ there exists a neighborhood $V$ of $m$ such that $f|_V$ is a locally flat embedding.
\item Let $B$ be a topological manifold, possibly with boundary. A \emph{locally flat immersion $f: M \to N$ over $B$} is a map $f$ fitting into a commutative diagram
\[\xymatrix{B \times M \ar[rr]^f \ar[rd] & & B \times N \ar[ld] \\
& B &}\] which has the property that for all $(t,m) \in B \times M$ there exists open neighborhoods $U$ of $t$ and $V$ of $m$ such that $f|_{U \times V}$ is a locally flat embedding.
\item The \emph{simplicial set $\mr{Imm}^\mr{lf}(M,N)$ of locally flat immersions} has as $k$-simplices the locally flat immersions $\Delta^k \times M \to \Delta^k \times N$ over $\Delta^k$.
\end{enumerate}\end{definition}

\begin{remark}Just like embeddings, immersions come in variants: \[\mr{Imm}^\mr{glf}(M,N) \subset \mr{Imm}^\mr{lf}(M,N) \subset \mr{Imm}^\mr{plf}(M,N) \subset \mr{Imm}^\mr{w}(M,N)\]
Since the results for embeddings have strongly relative versions, we have that $\mr{Imm}^\mr{glf}(M,N) = \mr{Imm}^\mr{lf}(M,N)$ by \cite{edwardskirby} and $ \mr{Imm}^\mr{lf}(M,N) \simeq \mr{Imm}^\mr{plf}(M,N) \simeq \mr{Imm}^\mr{w}(M,N)$ if $n \geq 5$ and $n-m \geq 3$ by \cite{lashofembeddings,cernavskiiemb}.\end{remark}

Analogous to the fact that smooth immersions have an injective differential, a topological immersion induces a microbundle map (which by definition are injective). In other words, there exists a map $d:\mr{Imm}^\mr{lf}(M,N) \to \mr{Bun}(TM,TN)$. The following is proven in \cite{leesimm} and \cite{burghelealashof}.

\begin{theorem}[Lees, Burghelea-Lashof] \label{thm.it} Let $M$ and $N$ be compact topological manifolds of dimension $m$ and $n$ respectively. Suppose either $m < n$, or $m=n$ and $M$ has no compact components. Then the map $d:\mr{Imm}^\mr{lf}(M,N) \to \mr{Bun}(TM,TN)$ is a homotopy equivalence. There is also a relative version.
\end{theorem}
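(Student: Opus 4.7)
The plan is to follow the classical $h$-principle strategy of Gromov--Hirsch--Smale, adapted to the topological setting. The two sides of the comparison $d \colon \mr{Imm}^\mr{lf}(M,N) \to \mr{Bun}(TM,TN)$ will be viewed as simplicial presheaves on the poset of open subsets of $M$, and I will show that (i) both satisfy a descent condition with respect to finite open covers, and (ii) $d$ is a weak equivalence when $M$ is a standard Euclidean open $U \cong \bR^m$. Once these two statements are in hand, the theorem follows by induction over a handle decomposition (or a finite chart cover, using compactness of $M$) and a Mayer--Vietoris/five-lemma argument. The relative version will be obtained by running the same inductive argument with the relative hypothesis threaded through every step.

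For descent, the key input on the immersion side is parametrized isotopy extension \IE, which via Corollary \ref{cor.restrkan} makes the restriction map $\mr{Imm}^\mr{lf}(U,N) \to \mr{Imm}^\mr{lf}(V,N)$ a Kan fibration for a locally flat codimension-zero submanifold $V \subset U$; combined with Lemma \ref{lem.lfglue} this yields the required homotopy sheaf property. On the bundle-map side, Proposition \ref{prop.sectident} (which rests on Kister's theorem \KT) identifies $\mr{Bun}(TM,TN)$ with a space of sections of a bundle whose fibre is a topological Stiefel manifold, and section spaces are homotopy sheaves for free. Thus both sides of $d$ satisfy Mayer--Vietoris, and it suffices to verify the comparison on arbitrarily small chart neighbourhoods.

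For the local comparison, I would treat $M = \bR^m$ directly. Using \KT, a microbundle map $T\bR^m \to TN$ covering $f \colon \bR^m \to N$ can be upgraded to a fibrewise locally flat embedding of an honest $\bR^m$-bundle into the $\bR^n$-bundle sitting inside $TN$; unwinding the tangent microbundle description $N \overset{\Delta}{\to} N \times N \overset{\pi_1}{\to} N$, this is exactly the data of a family of germs at each $x \in \bR^m$ of a locally flat embedding of a neighbourhood of $x$ into $N$. To promote these germs to a single locally flat immersion $\bR^m \to N$ realising the given bundle map I would use a radial-rescaling/flow construction: cover $\bR^m$ by small discs on which each germ is a genuine embedding, and patch using a partition of unity in the parameter direction together with \IE\ and Lemma \ref{lem.lfglue} to produce a coherent family whose derivative (in the microbundle sense of Definition \ref{def.mbmaps}) is the prescribed bundle map. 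The parametrized version of the construction, needed to show that $d$ induces isomorphisms on all homotopy groups rather than just $\pi_0$, works identically with $\bR^m$ replaced by $\Delta^k \times \bR^m$.

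The main obstacle is the local step: in the smooth case one integrates a vector field, but topologically there is no such ODE, so the passage from a microbundle map to an honest germ of immersion must be carried out purely through Kister's theorem together with the flatness lemmas of Subsection \ref{subsec.basics}. A second, more technical subtlety is the case $m = n$, where the ``no compact components'' hypothesis is used to ensure that the radial rescaling above never tries to wrap a compact open subset of $M$ around $N$, and where one must check that the relevant section space is actually what it should be (a space of locally flat codimension-zero embeddings of $\bR^m$ into $\bR^n$, which is weakly equivalent to $\Top(n)$ by \KT). Given the local case, the inductive extension over handles is formal, and compactness of $M$ guarantees that only finitely many handles and hence only finitely many Mayer--Vietoris steps are needed, making the result in each induction step actual homotopy equivalences rather than pro-equivalences.
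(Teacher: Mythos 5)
Your outline follows the right general strategy (the Haefliger--Poenaru/Lees scheme: restriction fibrations plus a local comparison, assembled by induction), but note first that the paper does not prove Theorem \ref{thm.it} at all: it is imported as a black box from Lees and Burghelea--Lashof, with the handle-decomposition issue in dimension 4 deferred to Appendix V.A of Kirby--Siebenmann. So the real comparison is against those papers, and your sketch is silent or mistaken exactly at the two points where all the work in them lies. First, the descent/fibration step: you claim the restriction map $\mr{Imm}^\mr{lf}(U,N)\to\mr{Imm}^\mr{lf}(V,N)$ is a Kan fibration ``via Corollary \ref{cor.restrkan}''. That corollary is about the evaluation $\Top(N)\to\mr{Emb}^\mr{lf}(M,N)$, i.e.\ about \emph{embeddings} and ambient homeomorphisms; isotopy extension does not directly yield the fibration (flexibility) property for immersion spaces, because a homotopy of immersions of $V$ cannot in general be realized ambiently. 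Establishing flexibility of $\mr{Imm}^\mr{lf}$ over handles is one of the two central technical theorems of topological immersion theory, not a corollary of \IE. Likewise Lemma \ref{lem.lfglue} glues families in the \emph{parameter} direction ($B\cup_D B'$), not over an open cover of the source $M$, so it cannot deliver the homotopy-sheaf property you want.

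Second, the local step is described incorrectly: you propose to ``patch using a partition of unity'' the local embeddings coming from the microbundle data into a single immersion of $\bR^m$. Local embeddings cannot be patched into an immersion this way -- if they could, the theorem would be trivial and would hold, say, for $S^1\to\bR^1$, where bundle maps exist but immersions do not; this is precisely why the dimension hypotheses and the h-principle machinery are needed. The correct local argument is the one you only gesture at: radial shrinking shows that restriction to the germ at the origin, $\mr{Imm}^\mr{lf}(\bR^m,N)\to\{\text{immersion germs at }0\}$, is a weak equivalence, and germs of locally flat immersions at a point are identified with microbundle injections essentially by the definition of the tangent microbundle together with Kister's theorem (\KT); the hypothesis that $M$ has no compact components when $m=n$ enters globally, in arranging a handle structure (or exhaustion) with handles of index $<m$, not in preventing ``wrapping'' during rescaling. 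Finally, your fallback of a ``finite chart cover'' in place of a handle decomposition (needed in dimension 4, as the paper's remark after Theorem \ref{thm.it} points out) requires descent along non-compact overlaps and the theorem for arbitrary open subsets of $\bR^m$ via an exhaustion argument, none of which is addressed. As it stands the proposal is a plausible table of contents for Lees's proof, but the steps that constitute the actual content are either missing or rest on inapplicable lemmas.
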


\begin{remark}Some of the statements in the literature of this theorem assume the existence of a handle decomposition, which is a problem since not all topological 4-manifolds have a handle decomposition. This restriction is removed in Appendix V.A of \cite{kirbysiebenmann}. The statement in \cite{leesimm} fixes an ambient manifold $M'$ that contains $M$ of the same dimension as $N$. The results in \cite{burghelealashof} and \cite{kirbysiebenmann} show this is not necessary.\end{remark}

\subsection{Smoothing, triangulation and the homotopy groups of automorphism groups} \label{subsec.stto} We are mainly interesting in immersions of codimension zero manifolds, and the previous section explains the relevance of $\Top(k)$ to this problem. The following is used in the introduction and Subsection \ref{subsec.pl}.

Since the homotopy groups of of $O(k)$ are known in a range by Bott periodicity, understanding the homotopy groups of $\Top(k)$ in a range reduces to understanding the homotopy groups of $\Top(k)/O(k)$. The following combines \cite{rado}, \cite{moise}, \cite{smaledisk}, \cite{hatchersmale}, Section V.5 of \cite{kirbysiebenmann} and Theorem 8.7A of \cite{freedmanquinn}.

\begin{theorem}\label{thm.to} If $k \leq 3$, we have $\Top(k)/O(k) \simeq *$. For $k > 3$, the map $\pi_i(\Top(k)/O(k)) \to \pi_i(\Top/O)$ is an isomorphism for $i \leq k+1$ and a surjection for $i = k+2$. 

We have that
\[\pi_i(\Top/O) \cong \begin{cases}0 & \text{if $i = 0,1,4$} \\
\bZ/2\bZ & \text{if $i=3$} \\
\Theta_i & \text{if $i \geq 5$}\end{cases}\]
where $\Theta_i$ is the group of diffeomorphism classes of smooth homotopy spheres of dimension $i$ under connected sum.
\end{theorem}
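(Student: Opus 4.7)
The statement is an assembly of classical results, so my plan is mainly to identify which cited theorem handles which piece and to sketch how they combine; there is no novel content to prove, only bookkeeping. I would split the theorem into three parts: (a) the low-dimensional vanishing $\Top(k)/O(k) \simeq *$ for $k \leq 3$, (b) the stability statement $\pi_i(\Top(k)/O(k)) \to \pi_i(\Top/O)$ is an isomorphism for $i \leq k+1$ and a surjection for $i = k+2$, and (c) the computation of $\pi_i(\Top/O)$.

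For (a), the case $k=1$ is elementary since $\Top(1)$ is homotopy equivalent to $O(1) = \bZ/2$. For $k = 2,3$, I would argue in two steps: first, that $\Diff(\bR^k) \to \Top(k)$ is a weak equivalence, and second, that $O(k) \to \Diff(\bR^k)$ is a weak equivalence. The first step uses that every topological manifold of dimension $\leq 3$ admits an essentially unique smooth structure (Radó for surfaces, Moise for $3$-manifolds), with the relative/parametric version needed to pass through $\Delta^k$-families. The second step is Smale's theorem for $k=2$ (i.e.\ $\Diff(D^2)\simeq O(2)$) and Hatcher's proof of the Smale conjecture for $k=3$ (which gives $\Diff(D^3)\simeq O(3)$, whence $\Diff(\bR^k)\simeq O(k)$).

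For (b), I would invoke the Kirby--Siebenmann stability theorem from Essay V of \cite{kirbysiebenmann}. This rests on the product structure theorem: a topological $\bR^k$-bundle that is smooth after crossing with $\bR$ was already smooth (in the concordance sense), with a corresponding range for parametric families. The range $i \leq k+1$ iso, $i = k+2$ surjective is precisely what that stability theorem outputs.

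For (c), I would use the fiber sequence $\Pl/O \to \Top/O \to \Top/\Pl$ and its associated long exact sequence. The Kirby--Siebenmann classification of $\Pl$-structures on topological manifolds in dimensions $\geq 5$, together with Freedman--Quinn (Theorem 8.7A of \cite{freedmanquinn}) in dimension $4$, identifies $\Top/\Pl \simeq K(\bZ/2,3)$, so $\pi_3(\Top/\Pl) = \bZ/2$ and the other homotopy groups vanish. Munkres--Hirsch--Mazur smoothing theory identifies $\pi_i(\Pl/O)$ with the group $\Theta_i$ of homotopy spheres (and gives vanishing in the low range where $\Theta_i$ itself is trivial). Combining in the long exact sequence, the only nontrivial contribution below $i=5$ comes from $\pi_3(\Top/\Pl) = \bZ/2$, which gives $\pi_3(\Top/O) = \bZ/2$ and $\pi_4(\Top/O) = 0$ (using $\pi_4(K(\bZ/2,3)) = 0$ and $\pi_4(\Pl/O)=0$); for $i \geq 5$ the $K(\bZ/2,3)$ factor is invisible and $\pi_i(\Top/O) \cong \pi_i(\Pl/O) = \Theta_i$. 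The main obstacle is purely expository: keeping track of exactly which dimensional hypotheses each referenced theorem requires, especially in dimension $4$ where Freedman--Quinn is essential and where Hatcher's Smale conjecture is needed to get $k=3$ down into the ``clean'' range.
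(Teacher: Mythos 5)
Your proposal is correct and follows essentially the same route as the paper, which offers no argument of its own but simply assembles the cited results (Rad\'o, Moise, Smale, Hatcher, Kirby--Siebenmann Essay V.\S 5, and Freedman--Quinn 8.7A) exactly as you do: low dimensions via uniqueness of smoothings plus Smale/Hatcher, the stability range from Kirby--Siebenmann, and $\pi_*(\Top/O)$ from the fibration $\Pl/O \to \Top/O \to \Top/\Pl$ with $\Top/\Pl \simeq K(\bZ/2,3)$ and $\pi_i(\Pl/O)\cong\Theta_i$. Your closing caveat about dimension~$4$ is apt, since Freedman--Quinn is precisely what lets the stability statement start at $k>3$ rather than $k\geq 5$.
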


Note that $\Theta_i$ is known to be finite \cite{kervairemilnor}, i.e. there exist only finitely many smooth homotopy spheres of fixed dimension $\geq 5$. The previous theorem is proved using smoothing theory and the product structure theorem. Let us state the version of smoothing theory that will be of use for us \cite{kirbysiebenmann,burghelealashof}. Let $\theta_O: BO(k) \to B\Top(k)$ be obtained by replacing the map induced by $O(k) \to \Top(k)$ by a fibration. There is a general result that says that there is a weak equivalence between a space of smooth structures on $M$ and the space of lifts of $TM: M \to B\Top(k)$ to $BO(k)$, see Essays V \& VI of \cite{kirbysiebenmann} and \cite{burghelealashof}. It is traditional to break this into two pieces:

\begin{theorem}[Burghelea-Lashof, Kirby-Siebenmann] \label{thm.st} Let $W$ be a topological manifold of dimension $k$, without connected components if $k=4$. Then the set of isotopy classes of smooth structures on $W$ is in bijection with equivalence classes of smooth reductions of the tangent microbundle, i.e. vertical homotopy classes of lifts 
\[\xymatrix{ & BO(k) \ar[d]^{\theta_O} \\
W \ar[r] \ar@{.>}[ru] \ar[r]_-{TW} & B\Top(k)}\]
For each smooth structure on $W$ there is a map
\[\mr{hofib}(B\Diff(W) \to B\Top(W)) \to \mr{Bun}(TW,(\theta_O)^* \gamma)\]
which is an injection on $\pi_0$ and an isomorphism on $\pi_i$ for $i>0$.
\end{theorem}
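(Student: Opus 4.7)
The plan is to follow the classical Kirby--Siebenmann/Burghelea--Lashof smoothing programme, whose two pillars are the \emph{product structure theorem} and a general immersion-theoretic (Morlet-style) classification of smoothings. I would organize the proof around a ``space of smoothings'' $\cS(W)$, defined as the simplicial set whose $k$-simplices are smooth structures on $\Delta^k \times W$ restricting fibrewise over $\Delta^k$, with $\pi_0 \cS(W)$ the isotopy classes. The theorem then amounts to identifying $\cS(W)$ with the space of lifts of the classifying map $TW \colon W \to B\Top(k)$ through $\theta_O \colon BO(k) \to B\Top(k)$, and relating both to $\mr{hofib}(B\Diff(W) \to B\Top(W))$.

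First I would recall the product structure theorem: for $k \neq 4$ (or for connected components of a $4$-manifold satisfying the stated hypothesis), any smooth structure on $W \times \bR^\ell$ is concordant to one of product form $\sigma \times \bR^\ell$, with $\sigma$ a smooth structure on $W$. Iterating parametrically, this shows that smoothing $W$ is equivalent to smoothing any open neighbourhood of the zero-section of the tangent microbundle, i.e.\ the question of smoothing $W$ depends only on the tangent microbundle up to microbundle isomorphism. Next, I would replace the microbundle by the $\bR^k$-bundle provided by Kister's theorem (\KT), so that smoothings of $W$ correspond to smoothings of the total space of the associated $\Top(k)$-bundle $E \to W$ that restrict to the standard smooth structure on each fibre $\bR^k$.

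The core step is then the standard h-principle style argument: since ``being a smooth chart compatible with the standard smoothing on a fibre'' is a local condition on $E$, the space of smoothings is the space of sections of a bundle over $W$ with fibre equal to the space of smoothings of $\bR^k$ at $0$. The latter is naturally a model for the homotopy fibre $\Top(k)/O(k)$, and sections of this bundle are exactly vertical homotopy classes of lifts of $TW$ along $\theta_O$, which identifies $\pi_0$ of $\cS(W)$ with the set of lift classes. This yields the first assertion.

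For the second assertion, I would observe that once a smooth structure is chosen as basepoint, the forgetful map $B\Diff(W) \to B\Top(W)$ has as homotopy fibre the classifying space of the simplicial group of homeomorphisms of $W$ modulo diffeomorphisms, which by a standard comparison fits into a map to $\cS(W)$ (change of smooth structure by pullback) inducing an injection on $\pi_0$ and an isomorphism on $\pi_i$ for $i > 0$; the injectivity (not bijectivity) on $\pi_0$ arises because distinct non-isotopic smooth structures can be identified by a homeomorphism not isotopic to a diffeomorphism. Composing with the identification of $\cS(W)$ with sections, i.e.\ with $\mr{Bun}(TW,(\theta_O)^*\gamma)$ via Proposition \ref{prop.sectident}, gives the required map. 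The main obstacle is the product structure theorem itself: it is where the restriction on dimension $4$ enters, and where the non-triviality of the topological category (failure of smooth transversality and of local handle cancellation) is overcome by Kirby's torus trick. The remaining steps are formal parametrized reformulations, together with Kister's theorem (\KT) and immersion theory (\IT) to pass between microbundles, $\bR^k$-bundles and section spaces.
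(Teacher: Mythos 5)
The paper does not prove this statement: it appears in Section \ref{sec.input} as imported background, quoted with references to Essays V--VI of \cite{kirbysiebenmann} and to \cite{burghelealashof}, so there is no internal proof to compare yours against. Your outline follows exactly the route of those sources --- product structure theorem, Kister's theorem (\KT), a Morlet/Burghelea--Lashof style classification identifying the space of smoothings with the space of sections of a bundle with fibre $\Top(k)/O(k)$ (equivalently with $\mr{Bun}(TW,(\theta_O)^*\gamma)$ via Proposition \ref{prop.sectident}), and the orbit-map comparison for $\mr{hofib}(B\Diff(W)\to B\Top(W))$ --- so as a sketch it is the right argument and the right attributions.

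Two cautions about where your outline understates the content. First, the ``core step'' is not a formal h-principle: the passage from ``smoothness is a local condition on $E$'' to ``the space of smoothings is a section space'' requires the flexibility (Kan-fibration) properties of the smoothing functor, and these are established by handle straightening, i.e.\ by the very product structure/concordance extension theorems obtained from the torus trick; likewise the identification of the fibre, the space of smoothings of $\bR^k$, with $\Top(k)/O(k)$ is itself a theorem of the same kind, so the dimension-$4$ caveat enters there as well, not only in the product structure theorem you single out. Relatedly, the classification is most naturally obtained for (sliced) concordance classes, and you need the concordance-implies-isotopy theorem to land on the statement about \emph{isotopy} classes as in the theorem. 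Second, your parenthetical justification of injectivity on $\pi_0$ for the second assertion is off: the image of $\pi_0$ of the orbit map $h \mapsto h_*\sigma$ consists precisely of the isotopy classes of structures diffeomorphic to the basepoint structure $\sigma$, so what fails is surjectivity (because of exotic smoothings not diffeomorphic to $\sigma$), while injectivity follows from the same fibration property of the orbit map $\Top(W) \to \mathcal{S}(W)$; the phenomenon you describe (non-isotopic structures identified by a homeomorphism) is not the reason the map is merely injective. These are presentation-level issues in an outline of a long classical proof rather than errors in strategy, and the paper itself treats the whole statement as a black box.
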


The relationship between smoothing theory and the homotopy groups of $\Top(k)/O(k)$ is as follows: obstruction theory says that one can find a lift if and only if certain obstruction classes $O_i \in H^{i+1}(W;\pi_i(\Top(k)/O(k)))$ vanish. Note that in the range $i+1 \leq k$, $\pi_i(\Top(k)/O(k)) = \pi_i(\Top/O))$.

There is a similar theory for PL structures on topological manifolds, called triangulation theory. It proceeds along the lines of the smooth theory, with the single modification that the homotopy groups of $\Top(k)/\Pl(k)$ are simpler in high dimensions.

\begin{theorem}\label{thm.plhomotopy} If $k \leq 3$, $\Top(k)/\Pl(k) \simeq *$. For $k=4$, $\Top(k)/\Pl(k) \simeq \Top(k)/O(k)$. For $k > 4$, $\Top(k)/\Pl(k) \simeq K(\bZ/2\bZ,3)$.
\end{theorem}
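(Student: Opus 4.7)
The plan is to prove each dimension range separately using a different package of classical results.

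For $k \leq 3$, the plan is to invoke Rad\'o's theorem (for $k=2$) and Moise's theorem (for $k=3$), together with the Bing--Moise refinements which state that every topological manifold of dimension $\leq 3$ admits a PL structure unique up to PL isotopy, and that every homeomorphism between such PL manifolds is isotopic to a PL homeomorphism. Applied to $\bR^k$ with its standard PL structure, this shows the inclusion $\Pl(k) \hookrightarrow \Top(k)$ is a weak equivalence of simplicial groups; taking the homotopy quotient gives $\Top(k)/\Pl(k) \simeq *$.

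For $k=4$, the plan is to use the Hirsch--Mazur--Munkres smoothing theory, which provides that $\Pl/O$ is highly connected (with lowest nonvanishing homotopy group in degree $7$) and that $\Pl(k)/O(k) \to \Pl/O$ is an equivalence in a range containing $k=4$. Consequently $\Pl(4)/O(4)$ is contractible in the relevant range. Applying this to the homotopy fibre sequence
\[\Pl(4)/O(4) \longrightarrow \Top(4)/O(4) \longrightarrow \Top(4)/\Pl(4)\]
coming from the factorisation $O(4) \to \Pl(4) \to \Top(4)$, one concludes that the second map is a weak equivalence, giving the stated identification.

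For $k > 4$, the plan breaks into two steps. First, by the Kirby--Siebenmann product structure theorem (Essay I of \cite{kirbysiebenmann}), the stabilisation map $\Top(k)/\Pl(k) \to \Top/\Pl$ is a weak equivalence once $k \geq 5$; this uses that any two PL structures on a topological manifold of dimension $\geq 5$ that agree stably are already concordant. Second, one invokes the Kirby--Siebenmann computation of the stable quotient: $\Top/\Pl \simeq K(\bZ/2\bZ,3)$, with the nontrivial class in $\pi_3$ given by the Kirby--Siebenmann triangulation obstruction. Combining these two identifications yields $\Top(k)/\Pl(k) \simeq K(\bZ/2\bZ,3)$.

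The main obstacle is the last ingredient, the identification $\Top/\Pl \simeq K(\bZ/2\bZ,3)$. This is the deep core of Kirby--Siebenmann theory: the existence of the nontrivial class in $\pi_3$ comes from the Rokhlin invariant of topological $4$-manifolds, while the vanishing of all higher homotopy groups is equivalent to the classification of PL structures on $T^n$ (and hence on products $M \times \bR$) established via the torus trick. The cases $k \leq 4$ reduce to classical low-dimensional results, but the stable computation in the case $k > 4$ is not a short argument and is best invoked as a black box.
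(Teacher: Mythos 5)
The paper does not actually prove this statement: it is quoted as a summary of known triangulation theory (Kirby--Siebenmann, Essay V, together with the classical low-dimensional results), so a proof ``by assembling the literature'' is exactly what is intended. Your $k\leq 3$ step (Rad\'o, Moise, Bing--Moise, promoted to a statement about the simplicial groups $\Pl(k)\hookrightarrow\Top(k)$) and your $k>4$ step (the product structure theorem giving $\Top(k)/\Pl(k)\simeq\Top/\Pl$ for $k\geq 5$, plus the Kirby--Siebenmann identification $\Top/\Pl\simeq K(\bZ/2\bZ,3)$) are the standard route and match the sources the paper leans on; for $k\le 3$ just be aware that you need the parametrized (family) versions of Moise--Bing to get a weak equivalence of automorphism spaces rather than a statement about individual manifolds.

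The genuine gap is in your $k=4$ argument. You deduce that $\Pl(4)/O(4)$ is ``contractible in the relevant range'' from $\Pl/O$ being $6$-connected and a stability statement $\Pl(k)/O(k)\to\Pl/O$, and then conclude that $\Top(4)/O(4)\to\Top(4)/\Pl(4)$ is a weak equivalence. That conclusion needs $\Pl(4)/O(4)$ to be weakly contractible in \emph{all} degrees, and no range argument through the stable space can give this: the comparison map is only highly connected in a range of order $k$, and $\pi_7(\Pl/O)\cong\Theta_7\cong\bZ/28\neq 0$, so above the stable range you get no information (moreover the Morlet/Burghelea--Lashof stability theorems you are invoking are proved for $k\geq 5$, so even the range statement at $k=4$ is not free). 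The fact that $\Pl(4)/O(4)$ is contractible, equivalently that $O(4)\to\Pl(4)$ is a weak equivalence, is a genuinely $4$-dimensional theorem whose proof rests on Cerf's $\Gamma_4=0$ together with Hatcher's proof of the Smale conjecture $\Diff(S^3)\simeq O(4)$ and low-dimensional smoothing theory --- note that the paper's companion statement, Theorem \ref{thm.to}, cites \cite{hatchersmale} and Theorem 8.7A of \cite{freedmanquinn} for precisely this kind of dimension-$4$ input. So replace the stable-range deduction by a citation of that dimension-specific result; the fibration sequence $\Pl(4)/O(4)\to\Top(4)/O(4)\to\Top(4)/\Pl(4)$ then gives the claim.
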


\begin{theorem}[Burghelea-Lashof, Kirby-Siebenmann] \label{thm.tr} Let $W$ be a topological manifold of dimension $k$, without connected components if $k=4$. Then set of isotopy classes of PL structures on $W$ is in bijection with equivalence classes of PL reductions of the tangent microbundle, i.e. vertical homotopy classes of lifts 
\[\xymatrix{ & B\Pl(k) \ar[d]^{\theta_\Pl} \\
W \ar[r] \ar@{.>}[ru] \ar[r]_-{TW} & B\Top(k)}\]
For each PL structure on $W$ there is a map
\[\mr{hofib}(B\Pl(W) \to B\Top(W)) \to \mr{Bun}(W,(\theta_\Pl)^* \gamma)\]
which is an injection on $\pi_0$ and an isomorphism on $\pi_i$ for $i>0$.\end{theorem}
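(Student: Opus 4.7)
The plan is to mirror the proof of the analogous smoothing theorem (Theorem \ref{thm.st}) verbatim, substituting $\Pl(k)$ for $O(k)$ throughout and swapping Kirby's smooth Product Structure Theorem for its PL analog. The statement breaks into two halves: (a) a classification of PL structures on $W$ up to isotopy rel $\partial W$ by vertical homotopy classes of lifts of $TW: W \to B\Top(k)$ along $\theta_\Pl$, and (b) a comparison with the homotopy fiber of $B\Pl(W) \to B\Top(W)$.

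For part (a), the essential technical input is the PL Product Structure Theorem (Essay I of \cite{kirbysiebenmann}): for $W$ of dimension $k \geq 5$, any PL structure on $W \times \bR^s$ is concordant rel $W \times \{0\}$ to a product PL structure, and the PL structure induced on $W$ is unique up to concordance. Granting this, I would define the simplicial set $\mr{PL}(W)$ whose $k$-simplices are PL structures on $\Delta^k \times W$ compatible with the projection to $\Delta^k$, and build a ``differentiation'' map $d: \mr{PL}(W) \to \mr{Bun}(TW, \theta_\Pl^* \gamma)$ by using a PL structure to endow each fiber of the $\bR^k$-bundle inside the tangent microbundle (produced by Kister's theorem \KT) with a PL structure. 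The core step is showing $d$ is a weak equivalence, which I would do by a handle-by-handle induction on a handle decomposition of $W$: each inductive step uses the Product Structure Theorem to extend a given PL structure, compatibly with a specified PL reduction, across the next handle. The PL Concordance Implies Isotopy theorem (also Essay I of \cite{kirbysiebenmann}) then upgrades the classification from concordance to isotopy, which is what is needed for the $\pi_0$ statement.

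For part (b), fix a PL structure on $W$ so that $B\Pl(W)$ and the comparison map $B\Pl(W) \to B\Top(W)$ are defined. The homotopy fiber of this map is weakly equivalent to $\Top_\partial(W)/\Pl_\partial(W)$, which by the classification of part (a) sits inside $\mr{PL}(W)$ as the $\Top_\partial(W)$-orbit of the chosen PL structure. Composing with the weak equivalence $d$ produces the required map into $\mr{Bun}(TW, \theta_\Pl^* \gamma)$. On $\pi_i$ for $i \geq 1$ this composite is an isomorphism because $d$ is; on $\pi_0$ it is only injective because distinct path components of $\mr{PL}(W)$ may correspond to PL structures outside the $\Top_\partial(W)$-orbit of the chosen one, i.e.\ to honest exotic PL structures that are not even topologically ambient-isotopic to it. The main obstacle is the PL Product Structure Theorem itself, which is the hardest ingredient of Kirby-Siebenmann theory and relies on the PL torus trick; everything else is a routine translation of the smooth argument carried out in Essays IV and V of \cite{kirbysiebenmann} and in \cite{burghelealashof}. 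The $k \leq 4$ caveat reflects exactly the range where the Product Structure Theorem fails or needs qualification (with dimension $4$ handled by Quinn, at the cost of the closed-component restriction).
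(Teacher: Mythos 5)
The paper does not prove this theorem; it is stated as a citation to Burghelea--Lashof and Kirby--Siebenmann, preceded only by the remark that triangulation theory ``proceeds along the lines of the smooth theory.'' Your sketch---mirror the smoothing argument, substitute the PL Product Structure Theorem and PL Concordance-Implies-Isotopy from Essay I of \cite{kirbysiebenmann}, build the differentiation map to $\mr{Bun}(TW,\theta_\Pl^*\gamma)$ by handle induction, then compare with the homotopy fiber of $B\Pl(W)\to B\Top(W)$---is exactly the route those references take, so the proposal is consistent with the paper.

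Two small cautions worth recording. First, the handle-by-handle induction presupposes a topological handle decomposition of $W$, which exists for $k\geq 6$ by Kirby--Siebenmann but is precisely the problematic point in dimension $4$ (and delicate in dimension $5$); the Kirby--Siebenmann argument in Appendix V.A handles this without a global handle decomposition, and your remark attributing dimension $4$ to Quinn acknowledges but does not resolve this. Second, the statement in the paper uses $B\Pl(W)$ and $B\Top(W)$ (no boundary condition), while you slip into $\Pl_\partial(W)$ and $\Top_\partial(W)$; the reasoning is the same, but the notations should match the statement being proved. Neither point is a gap in the approach, only in the exposition.
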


\subsection{Microbundle transversality} Next we describe an adequate notion of transversality for our purposes, though not necessarily satisfactory in general. A good candidate in general is Marin's stabilized microbundle transversality \cite{marin}, which we discuss later.

The way one proves transversality theorems is by reducing them to a statement about Euclidean spaces using charts. This requires the existence of a tubular neighborhood on one of the two manifolds.

\begin{definition}A \emph{normal microbundle} for a submanifold $M$ of $N$ is a microbundle $\nu$ over $M$ with an embedding $\nu \to N$ onto an open neighborhood of $M$.\end{definition}

In the smooth setting there is always a unique normal bundle, but in the topological or PL setting they are additional data: even if the embedding is locally flat they may not exist, and if they exist they may not be unique \cite{rsnormal}. They do exist and are unique after stabilizing by taking a product $\bR^s$ for $s$ sufficiently large, which is the basis of Marin's theorem \cite{marin}.

\begin{definition}Suppose $M$ is a submanifold of $N$ with a normal microbundle $\nu$. A submanifold $M'$ of $N$ is said to be \emph{microbundle transverse} to $\nu$ if $M \cap M'$ is a locally flat submanifold of $M$ and there exists a normal microbundle $\nu'$ of $M \cap M'$ in $M'$ such that $\nu' \to \nu$ is an open embedding of fibers.\end{definition}

Remark that the dimension of $\nu$ is $n-m$, so that this is also the codimension of $M \cap M'$ in $M'$. Thus $M \cap M'$ is $(m+m'-n)$-dimensional, as expected. The following is the main result of \cite{quinntrans}, removing a dimension restriction in Theorem III.1.5 of \cite{kirbysiebenmann} (note that these references use ``submanifold'' for locally flat submanifold, c.f. page 13 of \cite{kirbysiebenmann}).

\begin{theorem}[Kirby-Siebenmann, Quinn] \label{thm.mt} Suppose $M$ and $M'$ are proper locally flat submanifolds of $N$, $M$ has a normal microbundle $\nu$, there are closed subsets $C \subset D \subset N$, and $M'$ is transverse
to $\nu$ near $C$. Then there is an isotopy of $M'$, realizable by an ambient isotopy supported in any given neighborhood of $(D\backslash C) \cap M \subset M'$, to a submanifold microbundle transverse to $\nu$ near $D$.\end{theorem}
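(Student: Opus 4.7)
The plan is to reduce the statement to a purely local, Euclidean version and then prove the local version by a handle-by-handle engulfing argument, following the scheme of Essay III of \cite{kirbysiebenmann}.

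First I would replace the normal microbundle $\nu$ by an honest $\Top(n-m)$-bundle using Kister's theorem (\KT), so that $\nu$ becomes an open neighborhood of $M$ in $N$ fibered by Euclidean spaces over $M$. Pick a locally finite cover of $M$ by charts $U_\alpha \subset M$ over which this bundle trivializes, giving local product identifications $\nu|_{U_\alpha} \cong U_\alpha \times \bR^{n-m}$ in which the projection is the first coordinate projection. The condition of microbundle transversality of $M'$ to $\nu$ is local on $M$, so using a shrinking of the cover it suffices to carry out successive local isotopies supported in $\nu|_{U_\alpha}$ and glue them together (the local finiteness of the cover and the parametrized isotopy extension theorem (\IE) ensure that the infinite composition of locally supported isotopies is an ambient isotopy supported in any preassigned neighborhood of $(D\setminus C)\cap M$).

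This reduces the theorem to the following local statement: if $M'\subset \bR^m\times\bR^{n-m}$ is a proper locally flat submanifold that is microbundle transverse to the projection $\pi:\bR^m\times\bR^{n-m}\to\bR^m$ near a closed subset $A$, and $B\supset A$ is closed, then there is a compactly supported isotopy of $M'$, supported in a given neighborhood of $(B\setminus A)\cap(\bR^m\times\{0\})$, to a submanifold microbundle transverse to $\pi$ near $B$. To prove this local statement I would induct over a handle decomposition of a compact neighborhood of $(B\setminus A)\cap M$ in $M$, extending transversality one handle at a time. The single-handle step amounts to modifying $M'$ over a model handle $D^k\times D^{m-k}\subset \bR^m$ so that $M'$ agrees, after an ambient isotopy supported in $D^k\times D^{m-k}\times\bR^{n-m}$, with the product of its intersection with $\{0\}\times D^{m-k}\times\bR^{n-m}$ and the identity in the $D^k$-direction. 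This is executed by the Kirby torus trick combined with the topological product structure theorem (Theorem \ref{thm.st} plus its PL analog) to straighten $M'$ into a local product, using the concordance-implies-isotopy principle, which itself rests on parametrized isotopy extension (\IE).

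The main obstacle is the single-handle step when the relevant local dimension is $4$: the torus trick and product structure theorem of \cite{kirbysiebenmann} fail outright there, so the Kirby-Siebenmann argument only gives the theorem under a dimension restriction. To remove that restriction one invokes Quinn's work \cite{quinntrans}, which replaces the missing tools by controlled topology on topological $4$-manifolds from \cite{freedmanquinn}: one uses controlled handle decompositions to perform the same handle-by-handle engulfing, paying careful attention to the $4$-dimensional strata of $M$, $M'$, and $M\cap M'$. Once the single-handle step is established in all dimensions, the inductive argument on handles, followed by the globalization over the cover $\{U_\alpha\}$, yields the required ambient isotopy, completing the proof.
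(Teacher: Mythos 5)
There is no in-paper proof to compare against: the paper imports Theorem \ref{thm.mt} as a foundational black box, quoting it as the main result of \cite{quinntrans}, which removes a dimension restriction from Theorem III.1.5 of \cite{kirbysiebenmann}. Your sketch correctly reproduces the broad shape of that literature argument (Kister's theorem \KT{} to replace $\nu$ by a genuine $\bR^{n-m}$-bundle, localization over a locally finite cover with the gluing of locally supported isotopies controlled by \IE, handle induction for the local Euclidean statement, and Quinn's work for the four-dimensional case), so as an outline of where the theorem comes from it is accurate.

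However, as a proof it has a genuine gap exactly where the content of the theorem lies: the single-handle step. Straightening $M'$ to a product over a model handle $D^k\times D^{m-k}\times\bR^{n-m}$ is not a formal consequence of ``the torus trick plus the product structure theorem plus concordance-implies-isotopy.'' (Note also that Theorem \ref{thm.st} of the paper is the smoothing-theory classification of smooth structures, not the product structure theorem, and neither statement by itself straightens a locally flat submanifold relative to a projection.) In \cite{kirbysiebenmann} the local step proceeds by converting submanifold transversality into transversality of the restricted projection $M'\to\bR^{n-m}$ and then comparing locally imposed smooth structures via the product structure and concordance extension machinery; carrying this out is the actual work of Essay III, Section 1, and it is precisely here that the dimension restrictions appear --- restrictions involving $m'$ and the (co)dimensions hitting $4$, not merely $n=4$, so ``paying careful attention to the $4$-dimensional strata'' by ``invoking Quinn'' is a citation of \cite{quinntrans} and \cite{freedmanquinn} rather than an argument. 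In short, your proposal is an annotated pointer to the same sources the paper cites; if the goal were to actually prove Theorem \ref{thm.mt}, the local handle-straightening step (and its dimension-$4$ analogue) would still have to be supplied.
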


There is also a transversality results for maps, which is equivalent to the previous transversality result for submanifolds, see e.g. Theorem III.1.1 of \cite{kirbysiebenmann} and Section 9.6 of \cite{freedmanquinn}. There are several notions in the topological category related to microbundle transversality that we would like to mention:

\begin{itemize}
\item \emph{Local  transversality} One could imagine defining transversality as locally near $p \in M \cap X$ 
	\[(N,M,M') \cong (\bR^n,\bR^m \times \{(0,\ldots,0)\},\{(0,\ldots,0,1)\} \times \bR^{m'})\]
	That is, it is modeled on the intersection of two affine planes in general position. If this is true, we say that $M$ and $M'$ are \emph{locally transverse}.
	\item \emph{Stabilized microbundle transversality.} Suppose $M$ and $M'$ are proper submanifolds of $N$, then that $M'$ is \emph{stably microbundle transverse} to $M$ in $N$ if $M'$ is locally transverse to $M$ in $N$ and if there exists an integer $s \geq 0$ such that there exists a normal microbundle $\nu$ to $M \times 0$ in $N \times \bR^s$ so that $M' \times \bR^s$ microbundle transverse to $\nu$ in $N \times \bR^s$.
	\item \emph{General position.} A map $f: M \to N$ is said to be in \emph{general position} if the self-intersection locus has the expected dimension. More precisely, we need that there are closed locally finite subcomplexes $K_M \subset M$ and $K_N \subset N$ such that
	\begin{enumerate}[(a)]
	\item $K_M$ is the singularity set of $f$ (the subset where $f$ is not injectve), 
	\item $\dim K_M \leq 2m-n$,
	\item  $g$ sends $K_M$ onto $K_N$ by a PL map,
	\item $g|_{K \backslash K_M}$ is a locally flat embedding,
	\item  for all $k \in K_M$ there exists locally flat simplices $\Delta^m \subset M$ and $\Delta^n \subset N$ with $k \in \mr{int}(\Delta^m)$ such that $g$ sends $\Delta^m$ linearly into $\Delta^n$ and the PL structures of $\Delta^m$ and $\Delta^n$ are compatible with those of $K_M$ and $K_N$.
	\end{enumerate}  
	Appendix III.C of \cite{kirbysiebenmann} describes more general notion of general position.
\end{itemize}
%

Let us now explain the know relationships between these notions of transversality.

\begin{lemma}Microbundle transversality implies local transversality. Local transversality implies microbundle tranversality when (i) $n \geq 5$, $m = m' = n-1$, or (ii) $n\leq 4$.\end{lemma}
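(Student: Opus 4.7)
My plan is to handle the two implications separately, in each case working locally around a point $p \in M \cap M'$.

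For the forward direction, suppose $M'$ is microbundle transverse to $\nu$. Using Kister's theorem (\KT), I first replace $\nu$ by an honest $\bR^{n-m}$-bundle near $p$, giving a chart $(N, M) \cong (\bR^m \times \bR^{n-m}, \bR^m \times \{0\})$ in which the microbundle projection of $\nu$ is the projection onto the first factor. A chart of $\nu'$ near $p$ similarly identifies $(M', M \cap M') \cong (\bR^{m+m'-n} \times \bR^{n-m}, \bR^{m+m'-n} \times \{0\})$. The hypothesis that $\nu' \to \nu$ is an open fiberwise embedding forces the inclusion $M' \hookrightarrow N$, read in these charts, to take the form $(x, y) \mapsto (\iota(x), \phi_x(y))$, where $\iota$ embeds $M \cap M'$ into $M$ and each $\phi_x$ is an open embedding of fibers fixing the origin. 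After composing with a chart of $M$ that straightens $\iota$ to the standard inclusion $x \mapsto (x, 0) \in \bR^{m+m'-n} \times \bR^{n-m'}$, I would construct an ambient homeomorphism of a small neighborhood of $p$ in $N$, fixing $M$ and sending the image of $M'$ to the standard affine subspace. The homeomorphism is essentially $(x_1, x_2, z) \mapsto (x_1, x_2, \phi_{(x_1,0)}^{-1}(z))$, damped by a bump function in $x_2$ so as to extend by the identity; on a sufficiently small neighborhood of $p$ this is a homeomorphism because $\phi_x$ varies continuously and equals the identity fiberwise up to a continuous deformation, giving the required local transversality model.

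For the backward direction, local transversality provides, near each $p \in M \cap M'$, charts of $N$ in which $M$, $M'$, and their intersection are standard affine planes; in particular $M \cap M'$ is already known to be locally flat in both $M$ and $M'$. The remaining content of microbundle transversality is the existence of a normal microbundle $\nu'$ of $M \cap M'$ in $M'$ together with the fiberwise open embedding into $\nu$. In case (ii), $n \leq 4$, I would appeal to dimension-specific existence of normal microbundles: in dimensions at most three, Theorem \ref{thm.to} gives $\Top(k)/O(k) \simeq *$ so every manifold in sight is uniquely smoothable and normal bundles exist classically, while in dimension four the existence of normal microbundles for locally flat submanifolds is a theorem of Freedman-Quinn. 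In case (i), where $m = m' = n - 1$, the intersection $M \cap M'$ has codimension one in $M'$, and local transversality makes it two-sided there, so a normal microbundle $\nu'$ exists by Brown's bi-collaring theorem. Once $\nu'$ exists, the local standard form provided by local transversality shows that at each point the fiber of $\nu'$ sits as a linear subspace of the fiber of $\nu$, and a small fiberwise isotopy of $\nu'$ (supported near $M \cap M'$) produces the required open fiberwise embedding $\nu' \to \nu$.

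The main obstacle is the backward direction: local transversality is almost but not quite microbundle transversality, and constructing a compatible normal microbundle of $M \cap M'$ in $M'$ is the central difficulty, since such normal microbundles do not exist in general dimensions. The hypotheses (i) and (ii) are designed precisely to circumvent this, respectively through the codimension-one case (where Brown's theorem applies) and through low dimensions (where smoothing or Freedman-Quinn provide normal bundles). A secondary technical issue is matching the constructed $\nu'$ to the prescribed $\nu$ fiberwise, which is handled by using the local standard model from local transversality as a template for a small ambient isotopy.
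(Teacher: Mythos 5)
Your overall route is the paper's: for the forward implication you work in coordinates adapted to $\nu$, use the fiberwise compatibility of $\nu'$ with $\nu$ to write $M'$ locally as a product over $M\cap M'$, and straighten $M\cap M'$ inside $M$ using its local flatness; for the backward implication you reduce to existence of normal microbundles in the special cases (codimension one via Brown, dimension $\leq 3$ via smoothing, dimension $4$ via Freedman--Quinn), which are exactly the inputs the paper cites. The genuine gap is in the backward direction, at the step where the constructed $\nu'$ must be made compatible with the \emph{given} $\nu$. A bicollar of $M\cap M'$ in $M'$ coming from Brown, or a normal bundle coming from Freedman--Quinn, has no a priori relation to $\nu$, and the local-transversality charts have no a priori relation to either $\nu$ or $\nu'$; so the assertion that ``at each point the fiber of $\nu'$ sits as a linear subspace of the fiber of $\nu$'' is unfounded, and ``a small fiberwise isotopy'' is not a construction. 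This matching is precisely where the paper's proof invokes \emph{uniqueness} (relative, up to isotopy) of normal microbundles/collars in the relevant codimensions and dimensions: uniqueness lets one take the standard normal data visible in each local-transversality chart, adjust it to agree with $\nu$, and glue the pieces into a global $\nu'$ whose fibers embed openly in fibers of $\nu$. Without some appeal to uniqueness your argument only produces \emph{some} normal microbundle of $M\cap M'$ in $M'$, not microbundle transversality to $\nu$. (Also, two-sidedness is a global, not local, condition, so it cannot be ``read off'' from local transversality; what the codimension-one theory gives, and what you need, is a normal line microbundle, possibly nontrivial, matching $\nu|_{M\cap M'}$.)

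A secondary problem is the forward direction's ``damping by a bump function in $x_2$'': interpolating a homeomorphism of $\bR^{n-m}$ with the identity is not available in the topological category --- compare Lemma \ref{lem.localid}, which needs \KT\ and \SH\ just to make an embedding standard near a point --- so this step is unjustified as written. Fortunately it is also unnecessary: local transversality only asks for a chart on \emph{some} neighborhood of $p$, so you can simply restrict $(x_1,x_2,z)\mapsto(x_1,x_2,\phi_{(x_1,0)}^{-1}(z))$ to a small product neighborhood (its domain is open and the fiberwise inverse is continuous there by invariance of domain), then straighten $M\cap M'$ in $M$ and rescale; no extension by the identity is required, and this is what the paper's proof of the first statement does.
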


\begin{proof}The first statement appears on page 91 of \cite{kirbysiebenmann}, but we will prove it. There is nothing to prove when $m+m'< n$. Suppose that $m+m'\geq n$, $M$ has normal bundle $\nu$ and $M'$ is microbundle transverse to $M$. For $p \in M \cap M'$, the normal microbundle $\nu$ for $M$ in $N$ gives a homeomorphism of a neighborhood of $p$ in $(N,M)$ to $(\bR^n,\bR^m \times \{(0,\ldots,0)\})$ such that $p$ corresponds to the origin and the fibers of $\nu \to M$ correspond to the fibers of the projection $\bR^n \to \bR^m$ on the first $m$ coordinates.

Microbundle transversality says in these coordinates, $M \cap M'$ is given by a locally flat submanifold $P \subset \bR^m \times \{(0,\ldots,0)\}$ passing through the origin and it has a normal bundle $\nu'$ in $P$ whose fibers coincide with those of the projection $\bR^n \to \bR^m$ on the last $m$ coordinates. Hence we may assume $M' = P \times \bR^{n-m}$. Since $P \subset \bR^m$ is locally flat, there is a homeomorphism of  a neighborhood of $p$ in $(\bR^m,P)$ with $(\bR^m, \{(0,\ldots,0,1)\} \times \bR^{m+m'-n})$. By taking the product with the identity on $\bR^{n-m}$ we get the desired homeomorphism 
\[(N,M,M') \cong (\bR^n,\bR^m \times \{(0,\ldots,0)\},\{(0,\ldots,0,1)\} \times \bR^{m'})\]

The second statement follows from existence and uniqueness of normal bundles. This is true in codimension $\geq 2$ when $n \geq 5$ by \cite{brownnormal,ksnormal}. It is also true in dimension $\leq 3$, because $\Diff = \Top$ in that case \cite{moisebook,hatchertorus,moise}, and in dimension 4 by Section 9.3 of \cite{freedmanquinn}. 
\end{proof}

\begin{lemma}\label{lem.marin} Microbundle transversality implies stabilized microbundle transversality. Theorem \ref{thm.mt} holds without the assumption that $M$ has a normal microbundle if we replace microbundle transversality with stabilized microbundle transversality.\end{lemma}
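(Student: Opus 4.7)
The first statement is essentially immediate from the definitions. Suppose $M'$ is microbundle transverse to $M$ in $N$ via a normal microbundle $\nu$. By the previous lemma, $M'$ is locally transverse to $M$. Taking $s = 0$ in the definition of stabilized microbundle transversality, the same $\nu$ is a normal microbundle of $M \times \{0\} = M$ in $N \times \bR^0 = N$ and $M' \times \bR^0 = M'$ is microbundle transverse to it. So this assertion requires no work beyond unpacking definitions.

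For the second statement I would follow Marin's strategy \cite{marin}: reduce to the case already covered by Theorem \ref{thm.mt} by stabilizing until $M$ acquires a normal microbundle, and then descend back to $N$. First I would produce an integer $s \geq 0$ and a normal microbundle $\nu$ of $M \times \{0\}$ in $N \times \bR^s$ which, in a neighborhood of $C$, agrees with the normal microbundle witnessing the hypothesis that $M'$ is stably microbundle transverse to $M$ near $C$. The key technical input here is that, for any proper locally flat submanifold of a topological manifold, a normal microbundle exists after stabilization by a Euclidean factor of sufficiently high dimension; this is one of the points of Marin's paper.

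Next I would apply the relative form of Theorem \ref{thm.mt} inside $N \times \bR^s$ to the pair consisting of $M \times \{0\}$, equipped with the normal microbundle $\nu$, and the locally flat submanifold $M' \times \bR^s$, which is already microbundle transverse to $\nu$ near $C$. This produces an ambient isotopy of $N \times \bR^s$, supported in a preassigned neighborhood of $(D \setminus C) \cap M \subset N \times \{0\}$, making $M' \times \bR^s$ microbundle transverse to $\nu$ near $D$.

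The main obstacle is converting this ambient isotopy of $N \times \bR^s$ into an ambient isotopy of $N$ alone, since a priori it need not respect the projection to $N$. To handle this I would exploit the product structure: because the submanifold being moved is itself of the form $M' \times \bR^s$ and $\nu$ sits over $M \times \{0\} \subset N \times \{0\}$, the transversality condition for $M' \times \bR^s$ against $\nu$ is equivalent to a condition on how $M'$ meets $M$ in $N$, with the $\bR^s$-factor playing no essential role. Consequently, by standard microbundle arguments one can replace the isotopy produced by Theorem \ref{thm.mt} with one of the form $h_t \times \mathrm{id}_{\bR^s}$ for a uniquely determined ambient isotopy $h_t$ of $N$ supported in the required neighborhood of $(D \setminus C) \cap M$; the resulting $h_t$ moves $M'$ to a submanifold whose product with $\bR^s$ is microbundle transverse to $\nu$ near $D$, i.e., is stably microbundle transverse to $M$ near $D$. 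Local transversality near $D$ is preserved (or can be re-established by a local adjustment) from the structure of $\nu$. This is the step where I expect the most care to be required, essentially reproducing Marin's argument from \cite{marin}.
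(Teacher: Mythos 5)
Your first claim is fine and is exactly what the paper does: microbundle transversality gives local transversality (by the preceding lemma) and one takes $s=0$. For the second claim, however, the paper does not reprove anything: it simply cites Marin \cite{marin} (see also Theorem III.1.11 of \cite{kirbysiebenmann}, with the original dimension restriction removed by page 160 of \cite{freedmanquinn}). Your proposal instead tries to \emph{derive} Marin's theorem from Theorem \ref{thm.mt} by stabilizing, applying the unstabilized theorem in $N \times \bR^s$, and descending, and the descent step as you describe it does not work. The premise that ``the transversality condition for $M' \times \bR^s$ against $\nu$ is equivalent to a condition on how $M'$ meets $M$ in $N$, with the $\bR^s$-factor playing no essential role'' is false in exactly the cases of interest: if the stable normal microbundle $\nu$ of $M \times \{0\}$ in $N \times \bR^s$ split off the $\bR^s$-directions, its restriction would give a normal microbundle of $M$ in $N$, which is precisely what is assumed not to exist. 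In general the fibers of $\nu$ tilt into the stabilized coordinates, so transversality against $\nu$ genuinely involves them. Consequently there is no ``uniquely determined'' $h_t$ with the upstairs isotopy of the form $h_t \times \mathrm{id}_{\bR^s}$; the ambient isotopy produced by Theorem \ref{thm.mt} in $N \times \bR^s$ need not preserve the product structure, the image of $M' \times \bR^s$ under it need not be a product $M'' \times \bR^s$, and no ``standard microbundle argument'' repairs this. Establishing a controlled, effectively product-form version of the isotopy (or otherwise reducing the stabilized statement to a statement about an isotopy of $N$ alone) is the actual content of Marin's paper, not a routine afterthought.

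If your intention, as your closing sentence suggests, is ultimately to defer to Marin, that is perfectly acceptable and matches the paper, which treats this as a citation; but then the intermediate claims about replacing the isotopy by a product isotopy should be removed rather than asserted, since as stated they are a genuine gap (and the ``uniqueness'' claim is simply wrong). You would also then want to record, as the paper does, that Marin's theorem originally carried a dimension restriction which is removed in \cite{freedmanquinn}.
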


\begin{proof}The first claim is obvious by taking $s=0$. The second is due to Marin \cite{marin} (see also Theorem III.1.11 of \cite{kirbysiebenmann}), originally with a dimension restriction which can be removed by page 160 of \cite{freedmanquinn}).\end{proof}

Self-transversality results for a locally flat immersion can be deduced from microbundle transversality by a chart-by-chart induction. Alternatively, the cases $2m \leq n$ of self-transversality are implied by general position. Hence we state a useful general position result due to Dancis \cite{dancis}:
\begin{theorem}[Dancis] \label{thm.gp} If $f: M \to N$ is a proper map of topological manifolds, $m \leq n-3$ and $m \leq \frac{1}{3}(2n-1)$, then $f$ is properly homotopic by an arbitrary small perturbation to a map in general position. There is also a relative version for $f$ that are locally flat embeddings on a submanifold already.\end{theorem}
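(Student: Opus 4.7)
The plan is to reduce the theorem to classical PL general position in Euclidean charts and then patch local modifications together in a codimension-preserving way. Fix a complete metric on $N$ and a positive continuous ``allowance'' $\epsilon \colon M \to (0,\infty)$. Choose a locally finite cover $\{U_i\}_{i \in \bN}$ of $M$ by precompact open sets whose closures lie in charts $M \supset \tilde U_i \cong \bR^m$ and for which $f(\overline{U_i}) \subset V_i$ where $V_i \subset N$ is a chart with $V_i \cong \bR^n$; additionally pick a shrinking $K_i \Subset U_i$ with $\bigcup_i K_i = M$ and each $K_i$ compact. The goal is to construct inductively a sequence $f = f_0, f_1, f_2, \dots$ of proper maps with $d(f_i, f_{i-1}) < \epsilon/2^i$, each agreeing with $f_{i-1}$ off a compact neighborhood of $K_i$ inside $U_i$, and such that $f_i$ is in general position over $\bigcup_{j \leq i} K_j$. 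Local finiteness makes the process stabilize on any compact set, so the pointwise limit is the desired perturbation.

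For the local step, using the charts I would reduce to producing, from a continuous map $g \colon \bR^m \to \bR^n$, a PL map $g'$ which is $\delta$-close to $g$ and in general position over a given compact $K \subset \bR^m$. First approximate $g$ by a PL map $\tilde g$ using the standard PL approximation theorem on a neighborhood of $K$; then apply the classical PL general position theorem of Zeeman--Hudson to $\tilde g$: any PL map from an $m$-manifold to an $n$-manifold can be PL $\delta$-approximated by one whose singular set $K_M$ has dimension $\leq 2m-n$ and whose triple-point locus has dimension $\leq 3m-2n$. The hypothesis $m \leq \tfrac{2n-1}{3}$ forces $3m-2n < m$, so no triple points occur in the top stratum and $K_M$ is nowhere dense; the hypothesis $m \leq n-3$ ensures that the strata are tame enough that $g'|_{\bR^m \setminus K_M}$ is a locally flat embedding, via the codimension-$\geq 3$ local flattening results already invoked elsewhere in the paper (Remark~\ref{rem.embvariants}).

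The main obstacle is inductive patching: when modifying $f_{i-1}$ on a neighborhood of $K_i$, one must preserve the general-position structure already arranged over $A := \bigcup_{j<i} K_j \setminus U_i$. This has two subtleties. First, the local PL triangulations of singular sets in previous charts must be compatible with the triangulation produced in chart $i$ on the overlap; I would handle this by performing the local step of step $i$ \emph{relative} to the already-fixed PL structure on $(f_{i-1})^{-1}(\text{sing})\cap U_i$, which is a compact PL subcomplex away from the locus being modified. Second, a small $C^0$-perturbation in chart $i$ must not create new singularities on $A$ or near it; because we are in codimension $\geq 3$ and $f_{i-1}$ is a locally flat embedding on an open neighborhood of $A$, sufficiently small perturbations remain embeddings there by the {\v{C}}ernavski{\u\i}-type stability used in Remark~\ref{rem.embvariants}. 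Choosing the $\delta$ for the local step smaller than the corresponding ``stability modulus'' for the previous finitely many charts that meet $U_i$ yields the inductive hypothesis for $f_i$.

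For the relative statement (where $f$ is already a locally flat embedding on a submanifold $L \subset M$), I would arrange the cover so that each $U_i$ meeting $L$ is a product chart in which $L$ appears as a standard linear subspace, then appeal to the relative form of PL general position in the local step, treating $L \cap U_i$ as part of the already-fixed PL structure and invoking parametrized isotopy extension (\IE) as in the microbundle-transversality arguments of Section~\ref{sec.proof} to globalize local embedded modifications. Convergence, properness of the limit (using local finiteness and the $\epsilon$-bound so that preimages of compacts remain compact), and the general-position conclusion all follow from the construction.
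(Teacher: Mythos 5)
This theorem is not proved in the paper at all: it is quoted as an external input from Dancis \cite{dancis}, so the only honest comparison is between your outline and the genuine difficulties that Dancis's paper exists to overcome, and your outline assumes those difficulties away. The central gap is the patching step. A topological manifold has no global PL structure, and the transition homeomorphisms between your charts $\tilde U_i\cong\bR^m$, $V_i\cong\bR^n$ are not PL; consequently the ``general position'' you arrange in chart $i$ (a PL map with polyhedral singular set, relative to the chart's standard PL structure) is simply not visible from chart $j$: the previously constructed singular complex, and more importantly the other branches of $f$ passing through $f(U_i)$ that come from different source charts, are not subpolyhedra of the current chart's PL structure, so Zeeman--Hudson relative PL general position cannot be applied to them. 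Since the singular set of $f$ is a \emph{global} object --- double points can pair a point of $U_i$ with a point of $U_j$ --- making $f$ PL-generic chart by chart never controls these cross-chart intersections; arranging them is again a topological general position problem, so the argument is circular. This incompatibility of local PL structures is exactly the phenomenon that makes topological transversality hard (Kirby--Siebenmann, Quinn, Marin), and it is why the paper imports Theorem \ref{thm.gp} rather than proving it; Dancis's actual argument lives in the ``$2/3$-range'' precisely because that is where codimension-$\ge 3$ taming and approximation techniques for topological embeddings of polyhedra are available, not merely to kill triple points (note also that $m\le\tfrac{1}{3}(2n-1)$ gives $3m-2n\le -1$, i.e.\ an \emph{empty} triple-point set; your inequality $3m-2n<m$ is equivalent to $m<n$ and carries no information).

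A second, independent, error is the stability claim you use to protect the already-treated region $A$: being an embedding, and a fortiori being in general position, is \emph{not} a $C^0$-open condition, so ``sufficiently small perturbations remain embeddings there'' is false --- an arbitrarily $C^0$-small perturbation can fold the manifold onto itself and create new double points, possibly pairing modified points with points of $A$ far outside $U_i$. The paper itself flags this issue (``transversality not being an open condition for topological manifolds''), and the {\v{C}}ernavski{\u\i}--Lashof results quoted in Remark \ref{rem.embvariants} compare different simplicial sets of embeddings; they give no $C^0$-stability statement of the kind you need. Without a genuinely topological mechanism (engulfing, Homma-type approximation, or the embedding-approximation theorems in the metastable range) replacing both the relative-PL step and the stability step, the induction does not close.
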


General position techniques imply that a version of Theorem \ref{thm.mt} is true without the assumption of normal microbundles as long as $5 \leq m,m' \leq n-3$ and $m+m'-n \leq 3$, see Theorem 1.2 of \cite{dancis}.

\subsection{Kister's theorem and the stable homeomorphism theorem} In this section we state Kister's theorem and the stable homeomorphism theorem, two important results on the structure of homeomorphisms of Euclidean space. We use these to deduce a lemma on modifying an embedding $\bR^n \to \bR^n$ near a point. The first part of the following result appears in \cite{kister}, the second part is a consequence of the proof. 

\begin{theorem}[Kister] \label{thm.kt} Any family $f_s: \Delta^i \to \mr{Emb}(\bR^n,\bR^n)$ of embeddings has an isotopy $(f_{s,t})_{t \in [0,1]}$ to a family $f_{s,1}$ of homeomorphisms. If $f_s(D^n) \subset \mr{int}(2D^{n})$ for all $s$, we can make the following additional assumptions: 
\begin{enumerate}[(i)]
\item The isotopy is equal to $f_s$ on the unit disk, i.e. for all $t \in [0,1]$ we have that $f_{s,t} = f_s$ on $D^n$.
\item The homeomorphisms map $2D^n$ into $2D^n$, i.e. $f_{s,1}(2D^n) \subset 2D^n$.
\end{enumerate}\end{theorem}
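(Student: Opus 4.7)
The plan is Kister's original 1964 construction, adapted to the parametrized setting. The task splits into (a) building the target homeomorphism $F_s := f_{s,1}$ extending $f_s|_{D^n}$, and (b) producing an isotopy of embeddings from $f_s$ to $F_s$.

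For (a), the hypothesis $f_s(D^n) \subset \mr{int}(2D^n)$ together with the local flatness of $f_s$ means $f_s(\partial D^n)$ is a bicollared sphere inside $2D^n$. The topological Schoenflies theorem (a consequence of \SH{}) produces a homeomorphism $h_s : 2D^n \to 2D^n$ sending $D^n$ onto $f_s(D^n)$, and after adjusting $h_s$ on $D^n$ by the Alexander trick, I may arrange $h_s|_{D^n} = f_s|_{D^n}$. Extending $h_s|_{\partial 2D^n}$ radially over $\bR^n \setminus \mr{int}(2D^n)$ yields a homeomorphism $F_s$ of $\bR^n$ with $F_s(2D^n) = 2D^n$, establishing (ii); the restriction $F_s|_{D^n} = f_s|_{D^n}$ gives (i) at time $t=1$.

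For (b), Kister's telescoping is used. I would choose, continuously in $s$, an exhausting sequence of compact topological disks $E_{s,k}$ for $k \geq 2$ satisfying
\[f_s(k \cdot D^n) \subset \mr{int}(E_{s,k}) \subset E_{s,k} \subset \mr{int}(f_s((k+1)\cdot D^n)).\]
Each annular region $E_{s,k} \setminus \mr{int}(E_{s,k-1})$ is a topological product $S^{n-1} \times I$ by the annulus theorem (equivalent to \SH{}), so one can build a homeomorphism from $k D^n \setminus \mr{int}((k-1)D^n)$ onto it matching the previous stage on the inner sphere. The isotopy $f_{s,t}$ is then constructed shell-by-shell: as $t$ grows from $0$ to $1$, the agreement with $F_s$ (rather than with $f_s$) propagates outward through these shells, with a standard interpolation across one transitional shell. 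Condition (i) is automatic since $D^n$ is never modified, and the construction meets (ii) at $t=1$ by the preceding step.

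The main technical obstacle is parametrized continuity. Kister's original argument is natural in a single embedding, but one must verify that the Schoenflies extensions $h_s$, the intermediate disks $E_{s,k}$, the annular product structures, and the time-parametrized telescoping all depend continuously on $s \in \Delta^i$. This is handled by standard partition-of-unity and compactness arguments on the simplex. The essential reliance on \SH{} (through Schoenflies and the annulus theorem) is what makes this a substantive topological result rather than a formal consequence of \IE{}; without it, there would be no way to uniformly identify the topological type of the annular regions $E_{s,k} \setminus \mr{int}(E_{s,k-1})$ across the family.
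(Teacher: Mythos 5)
The paper does not reprove this statement: it cites Kister's paper \cite{kister} and observes that the refinements (i) and (ii) are visible from his construction. Your proposal takes a different route, and it has a genuine gap exactly at the point you defer as routine, namely parametrized continuity. Both of your key inputs are pure existence statements with no canonical choice: the homeomorphism $h_s\colon 2D^n\to 2D^n$ carrying $D^n$ onto $f_s(D^n)$ (which, note, already needs the annulus theorem and not just Schoenflies, since one must identify the region between $f_s(\partial D^n)$ and $\partial(2D^n)$ with a standard annulus), and the product structures $S^{n-1}\times I$ on the shells $E_{s,k}\setminus\mr{int}(E_{s,k-1})$. Choices of homeomorphisms and of product structures cannot be glued or averaged by a partition of unity -- the relevant spaces carry no convex structure -- so ``standard partition-of-unity and compactness arguments on the simplex'' do not yield choices depending continuously on $s\in\Delta^i$. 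What you would actually need is a parametrized Schoenflies/annulus theorem, i.e.\ a connectivity statement about spaces of bicollared embedded spheres, which is far deeper than Theorem \ref{thm.sh} and is not among the paper's inputs. Since the entire content of Theorem \ref{thm.kt} for the paper is the statement \emph{in families} (it is what makes microbundles reducible to $\Top(n)$-bundles), this is the crux and not a technicality.

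Relatedly, your framing of the logical dependencies is backwards. Kister's 1964 argument is an explicit, canonical stretching/telescoping procedure performed directly on the embedding; it uses neither the generalized Schoenflies theorem (which is Brown's theorem, not a consequence of \SH) nor the annulus theorem, and it is independent of the stable homeomorphism theorem -- indeed it predates \SH\ in high dimensions and by nearly two decades in dimension $4$. It is precisely the canonicity of his moves (each stage is given by a formulaic stretch, which can be performed fixing $f_s$ on $D^n$ and, after a radial compression, staying inside $2D^n$, whence (i) and (ii)) that makes the family version come for free. To obtain a correct proof you should either reproduce that canonical construction, as the paper implicitly does by citation, or supply a genuinely parametrized substitute for your Schoenflies and annulus inputs; as written, the proposal does not do either.
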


It is not clear that a homeomorphism fixing a point can be made the identity near that point. To prove this, we will use the notion of a stable homeomorphism. 

\begin{definition}A homeomorphism of a topological manifold $M$ is said to be \emph{stable} if it is a finite composition of homeomorphisms that are the identity on some open subset.\end{definition}

The following was proven first in dimension 2 \cite{rado}, followed by dimension 3 \cite{moise}, dimension $\geq 5$ \cite{kirbystable} and dimension 4 \cite{quinnends}.

\begin{theorem}[Rad\'o, Moise, Kirby, Quinn] \label{thm.sh} Every orientation-preserving homeomorphism $\bR^n \to \bR^n$ is stable.\end{theorem}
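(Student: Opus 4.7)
The plan is to prove this by cases according to dimension, since each of the four attributed proofs uses completely different techniques. The high-dimensional case ($n\geq 5$) is the Kirby torus trick, which is the conceptual heart of the result and the hardest step; the other cases follow from prior lower-dimensional structure theorems.

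For $n=1,2,3$ the plan is to reduce to the smooth (or PL) situation. Every topological manifold of dimension $\leq 3$ admits an essentially unique smooth structure (Radó for $n=2$, Moise for $n=3$), and on $\bR^n$ any homeomorphism is isotopic rel a large disk to a diffeomorphism after conjugation. For a smooth self-diffeomorphism $f$ of $\bR^n$, one first translates so that $f$ fixes the origin; then one uses the Alexander trick in the smooth category (straight-line isotopy from $f$ to $df_0$, followed by an isotopy of $\mr{GL}_n(\bR)^+$ to the identity) to produce an isotopy from $f$ to the identity supported on a compact set. Cutting this isotopy into finitely many small pieces each supported in a compact region that misses some fixed open set exhibits $f$ as a product of homeomorphisms each of which is the identity on some nonempty open set, i.e.\ as stable.

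The main content is the case $n\geq 5$, Kirby's torus trick. Given an orientation-preserving $h\colon \bR^n\to\bR^n$, the plan is: (i) choose an immersion $\alpha\colon T^n\setminus\{\mr{pt}\}\looparrowright \bR^n$ (produced by immersion theory \IT\ from the fact that $\tau_{T^n}$ is trivial); (ii) pull $h$ back to a homeomorphism $h'$ of $T^n\setminus\{\mr{pt}\}$ by lifting through the immersion and using uniqueness of lifts on small discs; (iii) use the topological handle lemma (a consequence of \IE\ combined with engulfing) to extend $h'$ across the puncture to a homeomorphism $\bar h\colon T^n\to T^n$, modifying by isotopy if necessary; (iv) $\bar h$ is automatically orientation-preserving, hence homotopic to the identity, so it lifts to a homeomorphism $\tilde h\colon \bR^n\to\bR^n$ commuting with a $\bZ^n$-action and differing from the identity by a bounded amount; (v) because $\tilde h$ is bounded distance from the identity, it can be spliced with the identity outside a large compact set via a radial convex-combination trick (the Alexander-style trick at infinity) to produce a homeomorphism equal to $\tilde h$ on a large region and equal to the identity outside a slightly larger region, which is by construction stable; (vi) finally, relate $\tilde h$ back to $h$ via the immersion $\alpha$: on each chart of the immersion one has $h$ agreeing with a stable homeomorphism, and composing finitely many such charted modifications (using that $T^n\setminus\{\mr{pt}\}$ covers any compact set of $\bR^n$ after rescaling) expresses $h$ as a finite composition of homeomorphisms each of which is the identity on an open set.

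The hard part, and where the dimension restriction $n\geq 5$ enters, is step (iii): extending a homeomorphism of the punctured torus over the puncture. This requires the PL handle-straightening tools of Kirby–Siebenmann, which fail in dimension $4$. For $n=4$ the plan defers entirely to Quinn's proof via controlled $h$-cobordism and end theorems for $4$-manifolds: one uses Quinn's end theorem to produce a topological collar at infinity for the difference between $h$ and the identity, and then stability follows as in the higher-dimensional case. Assembling all four dimension ranges completes the proof.
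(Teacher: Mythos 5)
The paper does not prove this theorem: it is quoted with attributions only, and the sentence preceding the statement gives the four references — Rad\'o for $n=2$, Moise for $n=3$, Kirby for $n\geq 5$, Quinn for $n=4$ — without reproducing any of the arguments. So your proposal is supplying a proof the paper deliberately omits, and there is no ``paper route'' to compare against. Your dimension-by-dimension decomposition is the historically correct one, but there are two real problems in the sketch.

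First, the low-dimensional reduction is imprecise at the final step. Cutting the Alexander isotopy $f_t$ into short pieces yields diffeomorphisms $f_{i/k}f_{(i+1)/k}^{-1}$ that are merely $C^1$-\emph{close} to the identity, which is not the same as being the identity on a nonempty open set. One needs the additional observation that a diffeomorphism $C^1$-close to the identity on a ball $B_R$ can be damped by a bump function to a diffeomorphism $g'$ agreeing with $g$ near the origin and equal to the identity outside $B_R$; then $g = g'\cdot (g'^{-1}g)$ is stable. Equivalently, the clean packaging is that stable diffeomorphisms form an open (hence open-and-closed) subgroup of the connected group $\mr{Diff}^+(\bR^n)$.

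Second, and more seriously, step (iii) of your torus-trick outline is circular. The ``topological handle lemma'' (handle-straightening) of Kirby--Siebenmann is a \emph{consequence} of the stable homeomorphism theorem together with the torus trick; it is not available as a tool in proving \SH, and it certainly does not follow from \IE\ plus engulfing alone. In Kirby's actual argument the puncture is filled by the elementary Alexander coning trick; the place where $n\geq 5$ enters, and the deepest input of the whole proof, is the surgery-theoretic classification of PL homotopy tori (Hsiang--Shaneson, Wall), which is needed to relate the exotic PL structure produced on $T^n$ back to the standard one after passage to a finite cover. Your outline omits this entirely. Also, in step (v), the ``radial convex-combination trick'' does not produce a homeomorphism — convex combinations of a homeomorphism with the identity generically are not injective — so the ``bounded implies stable'' step should be cited as a separate lemma (Connell) rather than dispatched by interpolation. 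With those two steps corrected the outline would be a fair summary of the Kirby and Quinn proofs.
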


We now deduce a lemma we will use make embeddings coincide near a point.

\begin{lemma}\label{lem.shcon} If $f: \bR^n \to \bR^n$ is an orientation-preserving homeomorphism, then $f$ is isotopic to a homeomorphism that is the identity on $D^n$, through an isotopy with compact support.\end{lemma}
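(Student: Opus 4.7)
The plan is to use the stable homeomorphism theorem (\SH) to reduce to the case where $f$ is the identity on a nonempty open subset of $\bR^n$, and then to perform a conjugation isotopy by a compactly supported ambient isotopy that pushes $D^n$ into that open subset.

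First I would reduce to the factored case. By Theorem \ref{thm.sh}, write $f = f_1 \circ \cdots \circ f_k$ where each $f_i$ is a homeomorphism which is the identity on a nonempty open set $U_i \subset \bR^n$. If I produce for each $i$ a compactly supported isotopy $H^i_t$ from $f_i$ to some $\tilde f_i$ with $\tilde f_i|_{D^n} = \mr{id}$, then the isotopy $t \mapsto H^1_t \circ \cdots \circ H^k_t$ takes $f$ to $\tilde f_1 \circ \cdots \circ \tilde f_k$, which is the identity on $D^n$; its support is contained in the compact set obtained by taking iterated preimages of the supports of the individual $H^i_t$ under the $f_j$'s, which is compact because each $f_j$ is a proper map. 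So it suffices to treat a single factor; rename it $g$, so $g$ is identity on an open $U \subset \bR^n$.

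Next I would construct an auxiliary isotopy that pushes $D^n$ into $U$. Pick a small open ball $B \subset U$. Using a radial contraction and a compactly supported ``translation'' (achievable inside a large ball containing $D^n \cup B$), build a compactly supported isotopy $\phi_t$ of $\bR^n$ with $\phi_0 = \mr{id}$ and $\phi_1(D^n) \subset B \subset U$. Let $K$ be a compact set containing the support of the entire family $\phi_t$.

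Finally I would perform the conjugation. Define
\[
H_t \;:=\; \phi_t \circ g \circ \phi_t^{-1}, \qquad t \in [0,1].
\]
This is a continuous path of homeomorphisms with $H_0 = g$. Because $\phi_1(D^n) \subset U$ and $g|_U = \mr{id}$, for $x \in D^n$ we have $g(\phi_1^{-1}(x)) = \phi_1^{-1}(x)$, hence $H_1(x) = \phi_1(\phi_1^{-1}(x)) = x$; so $H_1|_{D^n} = \mr{id}$. For compact support, note that if $x \notin K$ and $g(x) \notin K$, then $\phi_t^{-1}(x) = x$, $g(\phi_t^{-1}(x)) = g(x) \notin K$, and $\phi_t(g(x)) = g(x)$, so $H_t(x) = g(x)$ for all $t$. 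Thus the support of $H_t$ is contained in the compact set $K \cup g^{-1}(K)$, so the isotopy has compact support.

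The only real content is the factorization step via \SH; everything else is a direct conjugation argument. I do not expect any serious obstacle beyond verifying the bookkeeping on supports when concatenating the factor isotopies, which is straightforward from the properness of each $f_i$.
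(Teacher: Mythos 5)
Your overall strategy is the same as the paper's: use the stable homeomorphism theorem (\SH) to reduce to a homeomorphism $g$ that is the identity on a nonempty open set $U$ (your composition-of-isotopies bookkeeping for this reduction is fine, and more explicit than the paper's), then conjugate $g$ by a compactly supported ambient isotopy relating $D^n$ to a small disk in $U$. The only real difference is the direction of that ambient isotopy: the paper moves a disk $D \subset U$ onto $D^n$ (linear interpolation of center and radius, extended by isotopy extension), whereas you push $D^n$ into $U$.

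That direction choice is where your write-up has an error: the verification of $H_1|_{D^n} = \mr{id}$ does not work as stated. From $\phi_1(D^n) \subset U$ you cannot conclude $g(\phi_1^{-1}(x)) = \phi_1^{-1}(x)$ for $x \in D^n$; that would require $\phi_1^{-1}(x) \in U$, i.e. $D^n \subset \phi_1(U)$, which is the opposite containment from the one you arranged. In other words, the conjugation $H_t = \phi_t \circ g \circ \phi_t^{-1}$ is mismatched with the direction of your push. The fix is immediate: either set $H_t := \phi_t^{-1} \circ g \circ \phi_t$, so that for $x \in D^n$ one has $\phi_1(x) \in B \subset U$, hence $g(\phi_1(x)) = \phi_1(x)$ and $H_1(x) = x$, with the same support estimate $K \cup g^{-1}(K)$; or keep your formula but build $\phi_t$ so that $\phi_1^{-1}(D^n) \subset U$, i.e. expand a small ball in $U$ until it covers $D^n$ --- which is exactly the paper's choice. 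With that one-line correction the argument is complete and essentially identical to the paper's proof.
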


\begin{proof}By Theorem \ref{thm.sh} it suffices to prove this for a homeomorphism $\phi$ that is the identity on some open subset $U$. Take a closed disk $D \subset U$ and by linearly interpolating the radius and center move it to $D^n$. By isotopy extension this path of locally flat embeddings of disks can be extended to a compactly supported isotopy. Conjugating $\phi$ with this isotopy gives the desired isotopy of $\phi$.
\end{proof}

\begin{lemma}\label{lem.localid} If $f: \bR^n \to \bR^n$ is an orientation-preserving embedding with $f(D^n) \subset \mr{int}(2D^{n}) \subset f(2D^n)$. Then $f$ is isotopic to an embedding that is the identity on $D^{n}$, through an isotopy with support in $2D^{n}$.\end{lemma}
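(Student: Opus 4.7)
The plan is to combine Kister's theorem with Lemma~\ref{lem.shcon}, and then to confine the support of the resulting isotopy to $2D^n$.

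First, since $f(D^n)\subset\mr{int}(2D^n)$, I would apply Kister's theorem (Theorem~\ref{thm.kt}) to the constant $0$-simplex family at $f$: this produces an isotopy through embeddings from $f$ to a homeomorphism $g\colon \bR^n\to\bR^n$ with $g=f$ on $D^n$ and $g(2D^n)\subset 2D^n$. The homeomorphism $g$ is orientation-preserving since $f$ is, so Lemma~\ref{lem.shcon} applies to $g$ and yields a compactly supported isotopy from $g$ to a homeomorphism $h$ with $h|_{D^n}=\mr{id}_{D^n}$. Concatenating these two isotopies gives a path of embeddings from $f$ to $h$ ending at an embedding that is the identity on $D^n$.

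Next, I would show that both isotopies can be arranged to have support in $2D^n$. For the Kister portion, the isotopy is already constant on $D^n$; using parametrized isotopy extension (\IE, Theorem~\ref{thm.ie}) on a collar of $\partial(2D^n)$, one can interpolate from $g$ back to $f$ across an annular neighborhood of $\partial(2D^n)$, producing a modified isotopy that equals $f$ identically outside $2D^n$ while retaining the behaviour on $D^n$. For the Lemma~\ref{lem.shcon} portion, I would revisit its proof: it conjugates a piece of $g$ (which, by the stable homeomorphism theorem \SH, is the identity on some open set $U$) by an ambient isotopy moving a small disk $D\subset U$ onto $D^n$. Choosing $D$ inside $\mr{int}(2D^n)$ and using compactly supported isotopy extension inside $2D^n$, the ambient isotopy, and hence the conjugation producing $h$, is supported in $2D^n$.

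The main obstacle is precisely this support control: Kister's theorem and Lemma~\ref{lem.shcon} are each formulated with only compactly supported isotopies, and confining the support to $2D^n$ requires some care with collars and with the choice of the auxiliary disk in the proof of Lemma~\ref{lem.shcon}. Once that is arranged, concatenating the two support-controlled isotopies gives the desired isotopy of $f$ to an embedding equal to $\mr{id}$ on $D^n$, with support in $2D^n$, and everything else is formal.
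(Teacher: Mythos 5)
Your first two steps (Kister's theorem to produce a homeomorphism $g$ with $g=f$ on $D^n$ and $g(2D^n)\subset 2D^n$, then Lemma~\ref{lem.shcon}) coincide with the paper's, and you correctly identify support control as the crux; but your resolution of it has a genuine gap. Any isotopy starting at $f$ and supported in $2D^n$ ends at an embedding that agrees with $f$ outside $2D^n$, whereas $g$ is a surjective homeomorphism which in general differs from $f$ there (for instance $f$ need not be onto). So the Kister isotopy can never be ``modified to have support in $2D^n$'' while still terminating at $g$; and if, as you write, the modification only ``retains the behaviour on $D^n$'' (where $g=f$ anyway), its endpoint is some embedding $g'$ that does not match the starting point $g$ of your second isotopy, so the proposed concatenation does not make sense. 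The interpolation ``across an annular neighbourhood of $\partial(2D^n)$'' is also not something isotopy extension provides: blending two embeddings over an annulus is an injectivity problem, not an extension problem, and the intermediate stages of Kister's isotopy are completely uncontrolled on $2D^n\setminus D^n$. There is a smaller but real problem in your treatment of Lemma~\ref{lem.shcon} as well: the open sets on which the stable factors of $g$ are the identity come with $g$, so you cannot ``choose the auxiliary disk inside $\mr{int}(2D^n)$''; one only gets a compactly supported isotopy, whose support must then be compressed afterwards.

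The missing idea is the composition trick the paper uses, which makes any support control of the Kister isotopy unnecessary: discard that isotopy and keep only its endpoint $f_1=g$. Compress the Lemma~\ref{lem.shcon} isotopy $(f_t)_{t\in[1,2]}$, from $f_1$ to an $f_2$ that is the identity on $D^n$, so that $f_t=f_1$ outside $2D^n$ (a radial compression that keeps the starting map $f_1$ fixed), and then take the family $(f_t f_1^{-1} f_0)_{t\in[1,2]}$ with $f_0=f$. It starts at $f_0$; at $t=2$ it is the identity on $D^n$ because $f_0=f_1$ on $D^n$ and $f_2=\mr{id}$ there; and it is constant outside $f_0^{-1}f_1(2D^n)\subset f_0^{-1}(2D^n)\subset 2D^n$, where the last inclusion uses the hypothesis $\mr{int}(2D^n)\subset f(2D^n)$ --- a hypothesis your argument never invokes, which is a symptom of the fact that the support control has not actually been achieved.
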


\begin{proof}Theorem \ref{thm.kt} gives an isotopy $(f_t)_{t \in [0,1]}$ from $f_0 = f$ to a homeomorphism $f_1$, which we can assume is equal to $f$ on $D^{n}$ and satisfies $f_1(2D^{n}) \subset 2D^{2n}$. Lemma \ref{lem.shcon} implies that $f_1$ admits an isotopy $(f_t)_{t \in [1,2]}$ with compact support in $\bR^n$ such that $f_2$ is the identity on $D^{n}$. By a radial compression, without loss of general the support of this family lies in $2D^n$. Now consider $(f_tf_1^{-1}f_0)_{t \in [1,2]}: \bR^{n} \to \bR^{n}$. This has support in $f_0^{-1}f_1(2D^n) \subset f_0^{-1}(2D^n) \subset 2D^n$, when $t=1$ is equal to $f_0$, and when $t=2$ is the identity on $D^n$.\end{proof}

\begin{remark}This lemma may sound elementary, but it is not. For example, it implies the stable homeomorphism theorem (\SH): note that for any homeomorphism $f$ the lemma produces a second homeomorphism $g$ that is the identity on $D^n$ and coincides with $f$ on $2D^n$. Then $f = g \circ (g^{-1} \circ f)$, both of which are the identity on an open subset.\end{remark}

\subsection{The Whitney trick} The Whitney trick gives conditions under which one can simplify intersections of submanifolds of complimentary dimension.

Suppose we are given two locally flat submanifolds $M$ and $M'$ of dimension $m$ and $m' = n-m$ of the $n$-dimensional manifold $N$, that intersect locally transversally. Then we can count their intersection points with signs. Suppose that $a,b \in M \cap M'$ have opposite sign and can be connected by locally flat arcs $\gamma$ on $M$ and $\gamma'$ on $M'$. These form a circle $\chi$ in $N$, which we call a Whitney circle. If $m,n-m \geq 3$, then removing $M \cup M'$ from $N$ does not affect $\pi_1$. Thus if $N$ was simply-connected, $\chi$ bounds a disk  $D$ in $N \backslash (M \cup M')$.

A local model is provided by 
\begin{align*}N &= \bR^{n} = \bR^{m} \times \bR^{m'} \\
M &=\bR^m \times \{0\} \subset \bR^{m} \times \bR^{m'} \\ M' &= \{0\} \times \{(x,y) \mid y = 1- x^2\} \times \bR^{m'-1} \subset \bR^{m-1} \times \bR^2 \times \bR^{m'-1} \\
\gamma & = \{0\} \times \{(t,0)\mid t \in [-1,1]\} \times \{0\} \subset \bR^{m-1} \times \bR^2 \times \bR^{m'-1} \\
\gamma' &= \{0\} \times \{(t,1-t^2) \mid t \in [-1,1]\} \times \{0\} \subset \bR^{m-1} \times \bR^2 \times \bR^{m'-1} \\
\chi &= \gamma \cup \gamma' \\
D &= \{0\} \times \{(t,s) \mid t \in [-1,1],s \in [0,1-t^2]\} \times \{0\} \subset \bR^{m-1} \times \bR^2 \times \bR^{m'-1}
\end{align*} 

\begin{figure}[t]
\centering{
\resizebox{135mm}{!}{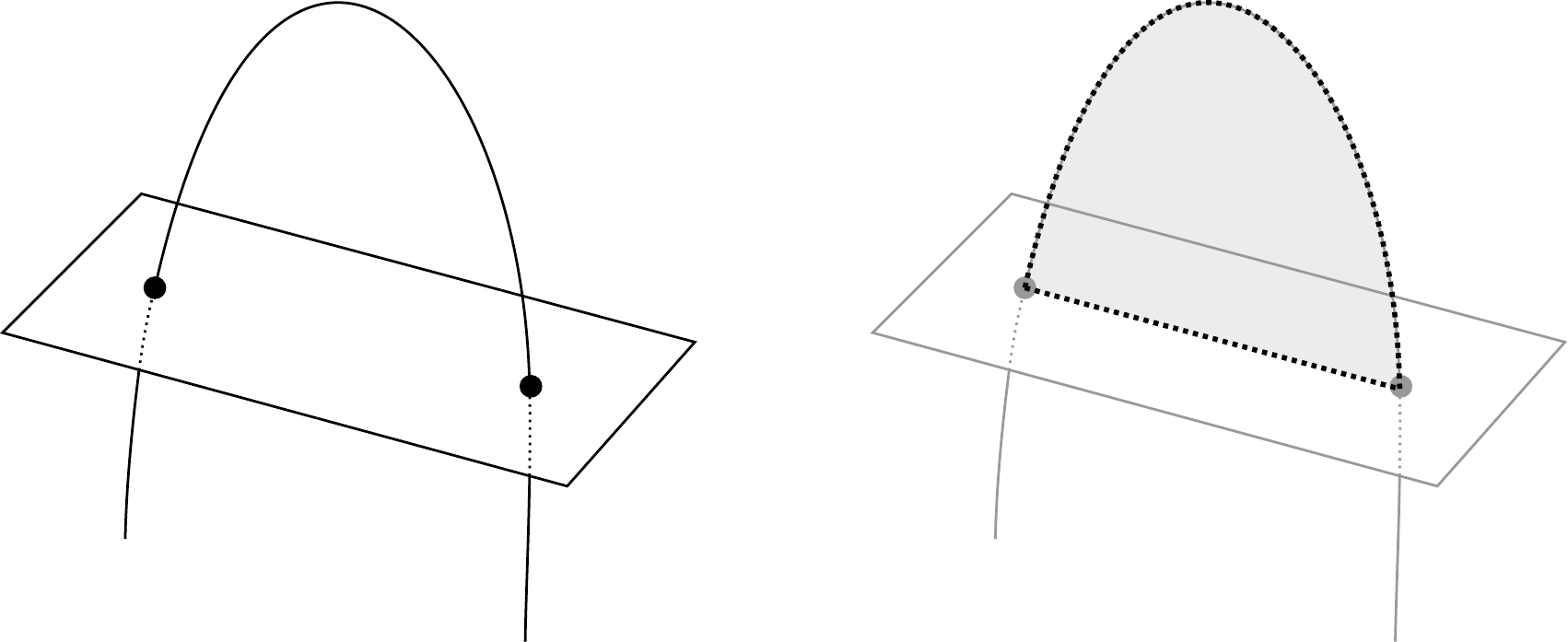}
\caption{The local model for the Whitney trick when $n=3$, $m=2$, $m'=1$.}
\label{fig.whitney}
}
\end{figure}

Note that in the local model we can cancel the two intersection points using a standard move which pushes $M$ upwards over $D$ as in Section 5 of \cite{milnorhcobord}. The Whitney trick for canceling intersection points is thus really about finding an embedded local model. The Whitney trick for topological manifolds it is stated in Section 7.3 of \cite{siebenmannicm} and proven in Section III.3 of \cite{kirbysiebenmann}.

\begin{theorem}[Kirby-Siebenmann] \label{thm.wt6} Suppose that $Y$ is 1-connected and of dimension $n \geq 6$, and $M$ and $M'$ are locally flat submanifolds of dimension $m$ and $m'=n-m$ respectively that intersect locally transversely. If $m,n-m \geq 3$, then any Whitney circle $c$ for $M$ and $X$ can be replaced by one that admits a local model. In particular, we can cancel the intersection points.
\end{theorem}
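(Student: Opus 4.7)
My plan would be to follow the smooth strategy step by step, replacing each smooth ingredient by its topological counterpart from Section~\ref{sec.input}. First, I would produce a Whitney disk. Since $m,m' \geq 3$, a general position argument using Theorem~\ref{thm.gp} shows that the inclusion $Y \setminus (M \cup M') \hookrightarrow Y$ is $\pi_1$-surjective (any loop can be pushed off the codimension $\geq 3$ strata). Together with $\pi_1(Y)=0$, this gives a continuous map $f \colon D^2 \to Y$ whose boundary is the Whitney circle $\chi$ and whose interior lands in $Y \setminus (M \cup M')$. Applying Theorem~\ref{thm.gp} relative to $\partial D^2$ (note $2 \leq n-3$ and $2 \leq \tfrac{1}{3}(2n-1)$ since $n\geq 6$), I make $f$ a map in general position, so the singular set has dimension $\leq 4-n \leq -2$ and $f$ is a locally flat embedding onto a disk $D \subset Y$.

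Second, I need to produce a normal microbundle of $D$. Because $D$ is a $2$-disk in a manifold of dimension $n \geq 6$, its codimension is at least $4$, so existence of a normal microbundle is guaranteed by the Brown--Kirby--Siebenmann normal bundle theorems invoked earlier in the paper. Using \KT\ (Theorem~\ref{thm.kt}) I may replace this microbundle by a genuine $\bR^{n-2}$-bundle $\nu_D \to D$ with structure group $\Top(n-2)$; since $D \cong D^2$ is contractible, $\nu_D$ is trivializable.

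Third, and this is where I expect the main obstacle, I need to refine the trivialization so that it realizes the standard local model. Along $\gamma \subset M$ the normal microbundle of $M$ in $Y$ gives a preferred splitting of $\nu_D|_\gamma$ as $\bR^{m-1} \oplus \bR^{m'}$ (normal to $M$ in $Y$ plus normal to $\gamma$ in $M$), and symmetrically along $\gamma'$. Combining these with the tangent data along $\chi$ gives a section, over the boundary circle $\chi$, of the bundle over $D$ whose fiber is the topological Stiefel manifold classifying the desired flags; via immersion theory (\IT) through Theorem~\ref{thm.it} and Proposition~\ref{prop.sectident}, extending this section across $D^2$ reduces to a single obstruction in $\pi_1$ of a topological Stiefel manifold of the type $\Top(n-2)/(\Top(m-1)\times\Top(m'-1))$, and one verifies (using local transversality at the two Whitney points to match up orientations, cf.\ Theorem~\ref{thm.sh}) that this obstruction vanishes. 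Once this extension is chosen, $\nu_D$ together with the boundary data patches into a microbundle chart identifying a neighborhood of $D$ in $Y$ with the local model $\bR^{m-1}\times \bR^2 \times \bR^{m'-1}$ in which $M$, $M'$, $\gamma$, $\gamma'$ and $D$ all appear in standard form.

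Finally, with the local model in place, the standard finger move supported inside the model cancels the two intersection points of opposite sign exactly as in Section~5 of \cite{milnorhcobord}. Microbundle transversality (\MT) guarantees that this move keeps $M$ and $M'$ locally flatly embedded and locally transverse throughout, completing the proof. The qualitatively new feature compared to the smooth case is entirely contained in the third step: assembling the local model requires the normal microbundle machinery together with \KT\ and \IT, since one cannot simply appeal to the existence of a smooth tubular neighborhood of $D$.
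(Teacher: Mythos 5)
Your proposal takes a fundamentally different route from the paper, and it has two gaps worth naming.

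The paper does \emph{not} re-derive the topological Whitney trick. It cites the proof in Section~III.3.4 of Kirby--Siebenmann as a black box, observing that the only hypothesis requiring verification is: there is an open neighborhood $U$ of the Whitney circle $\chi = \gamma \cup \gamma'$ carrying a $\Diff$-structure in which $\gamma$, $\gamma'$ are smooth and $M$, $M'$ are $\Diff$-transverse. The paper then constructs such a $U$ by gluing charts along the two arcs (Lemma~\ref{lem.arcs}), using Kister's theorem (\KT) and the stable homeomorphism theorem (\SH) to glue, and smoothing the arcs inside. That is the entire proof. Your proposal instead tries to reconstruct the Kirby--Siebenmann argument from scratch in the topological category: embed a disk, give it a normal microbundle, extend a framing to get a local model, and then move. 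This is a much more ambitious undertaking, and it is precisely why Kirby--Siebenmann pass through a smooth neighborhood rather than work purely topologically.

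Two concrete gaps. First, in Step~2 you cite the Brown--Kirby--Siebenmann normal bundle theorems to produce a normal microbundle for the embedded $2$-disk $D \subset Y^n$. Those theorems cover locally flat submanifolds of codimension at most $2$; your disk has codimension $n-2 \geq 4$. The conclusion is still true --- a locally flat $2$-disk in codimension $\geq 3$ has a Euclidean neighborhood by Stallings-type unknotting, or one may use Marin's stabilized theory as in Lemma~\ref{lem.marin} --- but the citation as written does not support it and you should recognize this is a nontrivial input. Second, and more seriously, Step~3 asserts that the obstruction to extending the boundary framing lies in $\pi_1$ of the quotient $\Top(n-2)/(\Top(m-1)\times\Top(m'-1))$, and that ``one verifies\ldots that this obstruction vanishes'' by ``matching up orientations.'' This is the heart of the Whitney trick and is not argued. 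Even granting the identification of where the obstruction lives, you would need to compute that $\pi_1$ (which is not a topological Stiefel manifold in the sense of Proposition~\ref{prop.sectident}), verify the vanishing, and then show that the extended section actually produces a chart in which $M$, $M'$, $\gamma$, $\gamma'$, and $D$ simultaneously appear in the standard position --- trivializing $\nu_D$ is not the same as exhibiting the local model. All of the real difficulty in the theorem has been compressed into a sentence. The paper avoids this difficulty entirely by handing it off to Kirby--Siebenmann once a smooth neighborhood of $\chi$ has been produced.
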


\begin{proof}The proof of the topological Whitney trick in Section III.3.4 of \cite{kirbysiebenmann} assumes the following: there exists an open neighborhood $U$ of $\chi = \gamma \cup \gamma'$ with a $\Diff$-structure such that $\gamma$ and $\gamma'$ are smooth in $U$ and $M$ and $M'$ are $\Diff$-transverse. We will show this can be arranged.

Using the local transversality, we get a chart $U_a$ in which the intersection point $a$ is $\Diff$-transverse and a chart $U_b$ in which $b$ is $\Diff$-transverse. It is easy to modify the arcs of the Whitney circle near these points so that they are smooth in $U_a$ and $U_b$. Then use the second part of Lemma \ref{lem.arcs} in $M$ or $M'$ to find charts $V$ and $V'$ agreeing with $U_a$ and $U_b$ near $a$ and $b$, and containing homotopic smooth embedded arcs $\gamma$ and $\gamma'$ respectively. By construction the smooth structures of $V$ and $V'$ are compatible near $a$ and $b$, so after possibly shrinking $V$ and $V'$ their union gives $U$.\end{proof}

\begin{lemma}\label{lem.arcs} Let $M$ be a path-connected topological manifold and $p,q \in M$. Then there exists a chart $V$ in $M$ containing an arc connecting $p$ and $q$, which is smooth in $V$.

Furthermore, if one starts with an embedded arc $\gamma$ and charts $U_0$, $U_1$ near the endpoints, we can arrange that the chart $V$ equals $U_0$ and $U_1$ near the endpoints and contains $\gamma$, and that the smooth embedded arc is homotopic to $\gamma$. If $\gamma$ was already smooth in $U_0$ and $U_1$ we can keep it fixed near its endpoints.\end{lemma}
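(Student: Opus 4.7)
The plan is to build the chart $V$ inductively by gluing finitely many Euclidean charts along $\gamma$, and then to smooth $\gamma$ inside $V\cong\bR^n$ using the fact that locally flat arcs in Euclidean space are tame and unknotted. The first statement reduces to the ``furthermore'' part: using path-connectedness pick any path from $p$ to $q$, perturb it to a locally flat embedded arc $\gamma$ (in dimensions $\geq 4$ this is general position, Theorem \ref{thm.gp}, and low dimensions can be handled directly since $\Diff = \Top$ there), pick small Euclidean charts $U_0, U_1$ about $p$ and $q$, and apply the ``furthermore'' statement.

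For the ``furthermore'' part, use compactness of $\gamma([0,1])$ to pick a partition $0 = t_0 < t_1 < \cdots < t_N = 1$ and Euclidean charts $W_1,\ldots,W_N$ with $\gamma([t_{i-1},t_i]) \subset W_i$, $W_1 \subset U_0$, and $W_N \subset U_1$. I would build a nested sequence of open sets $V_0 \subset V_1 \subset \cdots \subset V_N =: V$ with $V_i \cong \bR^n$, $\gamma([0,t_i]) \subset V_i$, and $V_i$ agreeing with $U_0$ near $p$. Start with $V_0$ a small Euclidean ball around $p$ in $W_1 \cap U_0$. For the inductive step, use parametrized isotopy extension \IE{} inside $W_i\cong\bR^n$ to arrange that a small open tubular neighborhood $T_i$ of $\gamma([t_{i-1},t_i])$ in $W_i$ meets $V_{i-1}$ in a standard open ball around $\gamma(t_{i-1})$; then set $V_i := V_{i-1} \cup T_i$, which is a union of two open $n$-balls glued along a standard open $n$-ball, and hence homeomorphic to $\bR^n$ by a Schoenflies-type argument that rests on the stable homeomorphism theorem \SH{}. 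At the final step, carry out a symmetric adjustment near $q$ so that $V$ also matches $U_1$ there.

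With $V\cong\bR^n$ in hand, $\gamma$ sits in $V$ as a locally flat, hence tame and unknotted, embedded arc from $p$ to $q$. There is therefore a compactly supported ambient isotopy of $V$ carrying $\gamma$ to a smooth embedded arc $\gamma'$, and the straight-line homotopy in $V\cong\bR^n$ witnesses that $\gamma' \simeq \gamma$; if $\gamma$ was already smooth in $U_0$ and $U_1$, I would perform this isotopy supported away from smaller sub-neighborhoods of the endpoints, so that $\gamma'$ coincides with $\gamma$ there. The main obstacle throughout is the inductive gluing step: identifying the union of two open Euclidean balls glued along an open ball with $\bR^n$ is essentially an instance of the topological annulus theorem, and this is where the nontrivial input \SH{} is doing the real work; the smoothing step at the end is standard once a genuine Euclidean chart $V$ containing $\gamma$ is available.
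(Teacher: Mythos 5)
Your proposal is correct and follows essentially the same inductive chart-by-chart strategy as the paper: cover $\gamma$ by finitely many Euclidean charts, glue them one at a time into a single chart using \SH{} (via \KT) to normalize the transition map on the overlap, and then smooth $\gamma$ inside the resulting $\bR^n$. The only cosmetic difference is that where you invoke a ``Schoenflies-type/annulus'' argument for the gluing step, the paper packages exactly that ingredient as Lemma \ref{lem.localid} (an orientation-preserving embedding $\bR^n \hookrightarrow \bR^n$ can be isotoped, with compact support, to equal the identity on $D^n$), which is the precise tool for making the overlap of consecutive charts standard so they glue to $\bR^n$.
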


\begin{proof}Take a path $\gamma$ connecting $p$ and $q$. Since in manifolds path-connectedness implies arc-connectedness we can assume it is embedded. By compactness there exists a finite collection of charts covering $\gamma$. We do induction of the number $k$ of such charts by eliminating one chart at a time. If $k=1$ we are done. Suppose $\gamma_0$ and $\gamma_1$ and are embedded arcs in $M$, $\gamma_0$ is contained in a chart $U_0$ and smooth in that chart, and $\gamma_1$ is contained in a chart $U_1$. Then we can find a chart $V$ containing $\gamma_0 \cup \gamma_1$. To do this, we see that from $U_1$ near $\gamma_1(0)$ we can obtain an embedding $\bR^n \to \bR^n$, which without loss of generality is orientation-preserving. Now apply Lemma \ref{lem.localid} (involving \KT\, and \SH), so that after shrinking $U_0$ and $U_1$ more we can glue the charts together. Now smooth $\gamma$ and make it embedded by smooth transversality. \end{proof}


\begin{remark}Alternatively one may use \emph{engulfing} to prove Lemma \ref{lem.arcs} and construct $U$ in the proof of Theorem \ref{thm.wt6}. This useful technique has many variations, an illustrative one appearing in \cite{newman} (see also \cite{rushing} for an exposition of various engulfing techniques). 

\end{remark}

\section{Extensions and applications}\label{sec.ext}

\subsection{Tangential structures and local coefficients}
Homological stability results for diffeomorphism groups admit several extensions. 

\begin{enumerate}[(i)]
\item The boundary of $M$ does not have to be a sphere, but can be any non-empty $(2n-1)$-dimensional manifold $P$.
\item Since $M$ might already contain some copies of $S^n \times S^n$, the relevant number in the range is not $g$ but $g(M_{\# g})$. 
\item Section 7 of \cite{grwstab1} extends the homological stability to diffeomorphisms of manifolds with $\theta$-structure, where $\theta: B \to BO(2n)$ is a spherical tangential structure. 
\item Section 8 of \cite{grwstab1} extends homological stability arguments for diffeomorphism groups to homology with abelian local coefficient systems. These are coefficient systems which have trivial monodromy along null-homologous loops.
\end{enumerate}

Following the proofs in \cite{grwstab1} while making modifications as in Section \ref{sec.proof}, one can prove the following strengthening of Theorem \ref{thm.main}. In our setting \emph{tangential structures} are maps $\theta: B \to B\Top(2n)$, a $\theta$-structure on $TW$ is a microbundle map $TW \to \theta^* \gamma$, and a tangential structure $\theta$ is \emph{spherical} if any $\theta$-structure over $D^{2n}$ extends to $S^{2n}$ (with $D^{2n}$ considered as a hemisphere).

Let $\gamma$ denote the universal microbundle over $B\Top(2n)$. and suppose we have lift of $TP \oplus \epsilon$ along $\theta$. For $W$ be a manifold with $P$-boundary, let $\mr{Bun}_\theta(W)$ be the subsimplicial set of $\mr{Bun}(TW,\theta^*\gamma)$ of microbundle maps that are equal to $l$ near the boundary. We define
\[B\Top^\theta_\partial(W) = \mr{Emb}^\mr{lf}_\partial(W,\bR^\infty) \times_{\Top_\partial(W)} \mr{Bun}_\theta(W)\]

\begin{atheorem}\label{thm.ext} Let $M$ a 1-connected topological manifold of dimension $2n \geq 6$ with non-empty boundary $P$ and $\theta: B \to B\Top(2n)$ be a spherical tangential structure with fixed lift $l$ of $TP \oplus \epsilon$.
\begin{enumerate}[(i)]
\item The stabilization map induces an isomorphism
\[H_*(B\Top^\theta_\partial(M_{\# g})) \to H_*(B\Top^\theta_\partial(M_{\# g+1}))\]
in the range $* \leq  \frac{g(M)+g-3}{2} $.
\item Let $\cA$ be an abelian coefficient system. The stabilization map induces an isomorphism
\[H_*(B\Top^\theta_\partial(M_{\# g});\cA) \to H_*(B\Top^\theta_\partial (M_{\# g+1});\cA)\]
in the range $* \leq  \frac{g(M)+g-4}{3} $.
\end{enumerate}
\end{atheorem}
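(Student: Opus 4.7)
The plan is to adapt Sections 7 and 8 of \cite{grwstab1} to the topological setting, using the tools from Section \ref{sec.proof}. The overall scheme parallels the proof of Theorem \ref{thm.main}: build a semisimplicial resolution $X^\theta_\bullet(W)$ of $B\Top^\theta_\partial(W)$ by $\theta$-structured embedded thick cores, show its realization is highly-connected over $B\Top^\theta_\partial(W)$, identify the $E^1$-page of the resulting spectral sequence with shifted copies of $B\Top^\theta_\partial(M_{\# g'})$, and identify the $d^1$-differential with the alternating sum of the stabilization map and zero. Concretely, a $q$-simplex of $K^\theta_\bullet(W)$ is a $q$-simplex $((t_0,\varphi_0),\ldots,(t_q,\varphi_q))$ of $K_\bullet(W)$ together with a $\theta$-structure on a neighborhood of the union of the cores extending the fixed one $l$ near the collar, and the stabilization map glues on $W^\epsilon_{1,2}$ with a once-and-for-all chosen $\theta$-structure extending $l$. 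The existence and essential uniqueness of such extensions over $W^\epsilon_{1,2}$ and over collections of thick cores uses sphericity of $\theta$: after retracting onto the $n$-skeleton, the only obstruction to extending a $\theta$-structure over a new handle lies in a group that sphericity forces to vanish.

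For part (i), the key technical step is the analog of Proposition \ref{prop.kconn}: that $K^\theta_\bullet(W)$ is $\frac{g(W)-4}{2}$-connected. The forgetful map $K^\theta_\bullet(W) \to K_\bullet(W)$ is a levelwise Kan fibration whose fiber over a configuration of $q+1$ thick cores is the space of $\theta$-structures on a neighborhood of their union extending $l$; sphericity makes this fiber path-connected, and in fact highly-connected, so that the long exact sequence combined with Proposition \ref{prop.kconn} yields the desired connectivity of the total space. The $\theta$-structured analogs of Lemmas \ref{lem.transitivity} and \ref{lem.cancellation} follow by a similar obstruction-theoretic argument: first match the underlying configurations by the unstructured version, then correct the $\theta$-structures by a homotopy, again using sphericity. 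The identifications of the $E^1$-page and the $d^1$-differential (Propositions \ref{prop.rese1} and \ref{prop.resd1}) then go through verbatim, using parametrized isotopy extension (\IE) exactly as before. Running the spectral sequence argument with $g$ replaced throughout by $g(M)+g \leq g(M_{\# g})$ yields part (i).

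For part (ii), I would follow Section 8 of \cite{grwstab1}. An abelian coefficient system $\cA$ has trivial monodromy along null-homologous loops, and the loops that arise in the identification of the $d^1$-differential on $E^1_{p,q} = H_q(B\Top^\theta_\partial(M_{\# g-p-1}); \cA)$ (coming from permuting disjoint copies of $W_{1,1}$) are null-homologous, so the differential still alternates between $t_*$ and $0$. The weaker range $\frac{g(M)+g-4}{3}$ compared with $\frac{g(M)+g-3}{2}$ comes from a double induction on both surjectivity and injectivity of $t_*$ with coefficients in $\cA$, exactly as in the proof of Theorem 8.1 of \cite{grwstab1}. The main obstacle will be the $\theta$-structured analog of Proposition \ref{prop.kconn}: one must verify that every isotopy and Whitney move used in its proof can be carried out while preserving a chosen $\theta$-structure up to homotopy. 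This amounts to a repeated use of the homotopy-extension property for $\mr{Bun}_\theta$, combined with sphericity to resolve the obstructions to extending $\theta$-structures across the local models appearing in the Whitney trick; modulo this, everything else is a mechanical adaptation of the arguments of \cite{grwstab1}.
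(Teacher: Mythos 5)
The paper's own ``proof'' of Theorem~\ref{thm.ext} is a single sentence pointing to Sections 7 and 8 of \cite{grwstab1} together with the modifications of Section~\ref{sec.proof}, so any reasonable proposal will necessarily be an unpacking of that. Your overall scheme --- $\theta$-structured resolution, connectivity of the realization, $E^1$- and $d^1$-identifications using sphericity for the cancellation/transitivity steps, and a double induction for abelian coefficients --- is correct and is the same strategy.

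However, one step in your part~(i) is wrong as stated. You claim that the forgetful map $K^\theta_\bullet(W) \to K_\bullet(W)$ has fibers that are ``path-connected, and in fact highly-connected'' because $\theta$ is spherical, and you wish to deduce the connectivity of $K^\theta_\bullet(W)$ from Proposition~\ref{prop.kconn} via the long exact sequence of a fibration. This does not work: the fiber over a configuration of $q+1$ thick cores, namely the space of $\theta$-structures on a neighborhood of the cores extending $l$ near the collar, is a space of lifts over a wedge of $n$-spheres (rel the collar), so it is a product of copies of $\Omega^n$ of the homotopy fiber of $\theta$. Sphericity ensures this space is non-empty, but it is certainly not highly-connected in general (take $\theta$ to be the universal framing, where the fiber of $\theta$ is $\Top(2n)$). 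So the fibration argument gives no useful connectivity bound. What is actually needed --- and what Galatius--Randal-Williams do --- is to define the structured core complex as those cores whose $\theta$-structure lies in a prescribed homotopy class, and to prove the connectivity of \emph{that} complex by carrying the $\theta$-structure through the lifting argument of Proposition~\ref{prop.kdelta}: at each Whitney trick and microbundle-transversality move, one checks that the resulting core can be endowed with the required $\theta$-structure, and sphericity is used exactly to resolve the extension problem over the plumbed $W_{1,1}$. You in fact describe this (correct) approach in the final sentences of your part~(ii) discussion, so the proposal is internally inconsistent: the fibration argument in the middle should be deleted and replaced by what you say at the end, applied already for part~(i).
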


\subsection{Application to PL homeomorphisms}\label{subsec.pl} One example of a tangential structure is the map $\theta_\Pl: B\Pl(2n) \to B\Top(2n)$ induced by the inclusion $\Pl(2n) \hookrightarrow \Top(2n)$ of simplicial groups. A PL structure on $W$ induces a microbundle map $TW \to (\theta_\Pl)^* \gamma$, i.e. gives a $\Pl(2n)$-reduction.

\begin{proposition}\label{prop.traingth}Let $W$ be a PL manifold of dimension $2n \geq 6$, then $B\Pl_\partial(W) \to B\Top^{\theta_\Pl}_\partial(W)$ is homotopy equivalent to the inclusion of a connected component.\end{proposition}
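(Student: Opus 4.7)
The plan is to reduce the comparison to the triangulation theorem by exhibiting both sides as fibrations over $B\Top_\partial(W)$. Consider the commutative square
\[\xymatrix{B\Pl_\partial(W) \ar[r] \ar[d] & B\Top^{\theta_\Pl}_\partial(W) \ar[d] \\ B\Top_\partial(W) \ar@{=}[r] & B\Top_\partial(W)}\]
where the right vertical map forgets the $\theta_\Pl$-structure and the left vertical map comes from the inclusion $\Pl_\partial(W) \hookrightarrow \Top_\partial(W)$. The strategy is to verify that the induced map on homotopy fibers is a weak equivalence onto a union of components, and then deduce the same for the total spaces.

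First I would identify the two homotopy fibers. Since $\mr{Emb}^\mr{lf}_\partial(W,\bR^\infty)$ is weakly contractible by Lemma \ref{lem.embcontract}, and part (ii) of Lemma \ref{lem.kanfibration} gives that the right vertical is a Kan fibration, its fiber over the basepoint is $\mr{Bun}_{\theta_\Pl}(W)$. The left vertical is the principal quotient fibration with fiber weakly equivalent to $\Top_\partial(W)/\Pl_\partial(W)$, i.e. the simplicial set of PL structures on $W$ rel $\partial W$ that lie in the $\Top_\partial(W)$-orbit of the given one. The induced map of fibers sends such a PL structure to its canonical $\theta_\Pl$-reduction of $TW$.

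Next I would invoke the rel-boundary version of Theorem \ref{thm.tr}. The hypothesis $2n \geq 6$ puts us comfortably into the regime of triangulation theory (in particular there is no $4$-dimensional component issue), and the theorem yields that
\[\Top_\partial(W)/\Pl_\partial(W) \to \mr{Bun}_{\theta_\Pl}(W)\]
is injective on $\pi_0$ and an isomorphism on $\pi_i$ for all $i > 0$. Comparing the long exact sequences of the two fibrations via the five-lemma then implies that $B\Pl_\partial(W) \to B\Top^{\theta_\Pl}_\partial(W)$ induces an isomorphism on $\pi_i$ for every $i \geq 1$.

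Finally, $B\Pl_\partial(W)$ is path-connected as the classifying space of a simplicial group, so its image lands in a single path component of $B\Top^{\theta_\Pl}_\partial(W)$. Combined with the isomorphism on higher homotopy groups from the previous step, the corestriction to that image component is a weak equivalence, which is the desired conclusion. The main obstacle I anticipate is establishing the precise rel-boundary form of Theorem \ref{thm.tr} and checking that the map it provides really does agree with the map on homotopy fibers arising from the square above; this should follow from unwinding the obstruction-theoretic interpretation of triangulation theory with fixed boundary $\theta_\Pl$-structure, but it requires some care to set up the parametrized formulation correctly.
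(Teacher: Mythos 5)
Your argument is essentially the paper's own proof: the same commutative square over $B\Top_\partial(W)$, the same appeal to triangulation theory (Theorem \ref{thm.tr}) to compare the vertical homotopy fibers ($\pi_0$-injective, $\pi_i$-iso for $i>0$), and the same five-lemma comparison of long exact sequences, with your final observation that connectedness of $B\Pl_\partial(W)$ pins down the component. This is correct; the only point the paper also leaves implicit, which you rightly flag, is the rel-boundary formulation of Theorem \ref{thm.tr} and the identification of its map with the map on fibers.
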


\begin{proof} There is a commutative diagram
\[\xymatrix{B\Pl_\partial(W) \ar[r] \ar[d] & B\Top^{\theta_\Pl}_\partial(W) \ar[d] \\
B\Top_\partial(W) \ar@{=}[r] & B\Top_\partial(W)}\] 
Triangulation theory as Theorem \ref{thm.tr} says that the homotopy fiber $F$ of $B\Pl_\partial(W) \to B\Top_\partial(W)$ maps to the homotopy fiber $F'$ of  $B\Top^{\theta_\Pl}_\partial(W) \to B\Top_\partial(W)$ by a map that is an injection on $\pi_0$ and an isomorphism on $\pi_i$ for $i > 0$. We claim that $B\Pl_\partial(W) \to B\Top^{\theta_\Pl}_\partial(W)$ is the inclusion of a connected component. The long exact sequences of homotopy groups start out as
\[\xymatrix@C=1em{\cdots \ar[r] & \pi_1(F) \ar[r] \ar@{=}[d] & \pi_1(B\Pl_\partial(W)) \ar[r] \ar[d] & \pi_1(B\Top_\partial(W)) \ar@{=}[d] \ar[r] & \pi_0(F) \ar[r] \ar@{^(->}[d] & 0 \ar[r] \ar@{^(->}[d] & 0 \ar@{=}[d] \\
\cdots \ar[r] & \pi_1(F') \ar[r] & \pi_1(B\Top^{\theta_\Pl}_\partial(W)) \ar[r] & \pi_1(B\Top_\partial(W))  \ar[r] & \pi_0(F') \ar[r] & \pi_0(B\Top^{\theta_\Pl}_\partial(W)) \ar[r] & 0 
}\]
and the five lemma then says the map $\pi_i(B\Pl_\partial(W)) \to \pi_i(B\Top^{\theta_\Pl}_\partial(W))$ for $i \geq 1$ is an isomorphism.
\end{proof}

Often the relationship is simpler:

\begin{remark}\label{rem.comppltop} If $n \geq 3$, by Theorems \ref{thm.plhomotopy} and \ref{thm.tr} the set of PL structures on a topological manifold $W$ up to isotopy is in non-natural bijection with $H^3(W,\bZ/2\bZ)$ (it may be helpful to note that the set of PL structures up to PL homeomorphism is a quotient of this, so that $\pi_0(B\Top^{\theta_\Pl}_\partial(W))$ stabilizes even if $n=3$). Thus if $H^3(W,\bZ/2\bZ) = 0$, the map in Proposition \ref{prop.traingth} is a weak equivalence. Furthermore, if $W$ is $3$-connected, then $\mr{Bun}_{\theta_\Pl}(W)$ is contractible by obstruction theory and hence $B\Pl_\partial(W) \to B\Top_\partial(W)$ is a weak equivalence.\end{remark}

To apply Theorem \ref{thm.ext} we check that PL structures are spherical.

\begin{lemma}\label{lem.triangsph} Every PL structure on $D^{2n}$ extends to $S^{2n}$.
\end{lemma}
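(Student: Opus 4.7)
The plan is to reinterpret this via Theorem \ref{thm.tr} (triangulation theory) as a lifting problem, then apply obstruction theory. Up to isotopy, a PL structure on a topological manifold $W$ corresponds to a vertical homotopy class of lifts of the tangent microbundle classifier $TW \colon W \to B\Top(2n)$ along $\theta_\Pl \colon B\Pl(2n) \to B\Top(2n)$. Viewing $D^{2n}$ as the upper hemisphere of $S^{2n}$, we are therefore given a lift $\ell \colon D^{2n} \to B\Pl(2n)$ of $TD^{2n}$, and our goal is to extend it to a lift $\tilde{\ell} \colon S^{2n} \to B\Pl(2n)$ of $TS^{2n}$ along the fibration $\theta_\Pl$.

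The successive obstructions to such an extension live in $H^{i+1}(S^{2n}, D^{2n}; \pi_i(F))$, where $F \simeq \Top(2n)/\Pl(2n)$ is the homotopy fiber of $\theta_\Pl$. Since $D^{2n}$ is contractible, the pair $(S^{2n}, D^{2n})$ has the cohomology of $S^{2n}$, concentrated in degree $2n$; hence the only potentially non-zero obstruction sits in $\pi_{2n-1}(\Top(2n)/\Pl(2n))$.

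For $2n \geq 6$, Theorem \ref{thm.plhomotopy} identifies $\Top(2n)/\Pl(2n) \simeq K(\bZ/2\bZ, 3)$, whose only non-trivial homotopy group is in degree $3$. Since $2n - 1 \geq 5$, this group vanishes and the sole potential obstruction is automatically zero, so the desired extension exists. The case $2n = 2$ is trivial since $\Top(2)/\Pl(2) \simeq \ast$, and the case $2n = 4$ is not needed for the applications, which require $2n \geq 6$. Beyond invoking Theorem \ref{thm.plhomotopy} there is essentially no obstacle: once one knows the fiber, the single potential obstruction vanishes for purely dimensional reasons.
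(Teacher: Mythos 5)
Your route is genuinely different from the paper's. The paper's proof is a one-line geometric construction: restrict the given PL structure to $\partial D^{2n}$ and cone it off over the complementary hemisphere; the cone is a PL ball because a PL manifold homeomorphic to $S^{2n-1}$ with $2n-1\geq 5$ is PL standard, so the two pieces glue to a PL structure on $S^{2n}$ that literally extends the given one. You instead run the problem through triangulation theory (Theorem \ref{thm.tr}) and obstruction theory, and your computation is correct: the only possible obstruction to extending the lift lies in $H^{2n}\bigl(S^{2n},D^{2n};\pi_{2n-1}(\Top(2n)/\Pl(2n))\bigr)$, which vanishes for $2n\geq 6$ by Theorem \ref{thm.plhomotopy}. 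This is in fact close in spirit to verifying directly, at the level of lifts of the tangent microbundle, the bundle-theoretic sphericity condition for $\theta_\Pl$ in which the lemma is ultimately used; the paper's coning argument, by contrast, needs no input beyond the PL Poincar\'e theorem and works uniformly.

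One step you gloss over: Theorem \ref{thm.tr} as quoted is an absolute statement, a bijection between isotopy classes of PL structures on a fixed manifold and vertical homotopy classes of lifts. Producing a lift of $TS^{2n}$ extending $\ell$ therefore only hands you, a priori, a PL structure $\Sigma$ on $S^{2n}$ whose restriction to $D^{2n}$ is \emph{isotopic} to the given structure $\sigma$ (and even that uses compatibility of the correspondence with restriction, which the quoted statement does not spell out), not a structure literally extending $\sigma$. To close this you need either the relative form of the classification theorem (structures rel $D^{2n}$ correspond to lifts rel $D^{2n}$), which is standard in Kirby--Siebenmann but stronger than what the paper records, or a patch: an isotopy carries $\Sigma|_{D^{2n}}$ to $\sigma$ via a self-homeomorphism $h$ of $D^{2n}$, and pulling $\Sigma$ back along a homeomorphism of $S^{2n}$ extending $h$ (extend by the Alexander trick on the other hemisphere) yields a PL structure restricting to $\sigma$ on the nose. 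With that addendum your argument is complete in the range $2n\geq 6$, which is all the paper needs; note that the low-dimensional cases you set aside are not covered by Theorem \ref{thm.tr} anyway, whereas the coning proof handles them directly.
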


\begin{proof}Cone off the PL structure on $\partial D^{2n}$ to extend to the upper hemisphere $D^{2n} \subset S^{2n-1}$.\end{proof}

This implies that Theorems \ref{thm.main} and \ref{thm.ext} extend to PL homeomorphisms: ``exhibiting homological stability'' is shorthand for saying the analogous results to Theorems \ref{thm.main} and \ref{thm.ext} hold.

\begin{corollary}\label{thm.pl}Let $W$ be a compact 1-connected PL manifold of dimension $2n \geq 6$ with non-empty boundary $P$. Then  $B\Pl_\partial(M_{\# g})$  exhibits homological stability.\end{corollary}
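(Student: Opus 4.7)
The plan is to derive this from Theorem \ref{thm.ext} applied to the tangential structure $\theta_\Pl \colon B\Pl(2n) \to B\Top(2n)$, together with Proposition \ref{prop.traingth}. The input to Theorem \ref{thm.ext} is easy to check: Lemma \ref{lem.triangsph} says $\theta_\Pl$ is spherical, and the PL structure on $W$ restricts to a PL structure on $\partial W = P$, which supplies the required lift $l$ of $TP \oplus \epsilon$ along $\theta_\Pl$.

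Next I would produce a commutative square
\[\xymatrix{B\Pl_\partial(M_{\# g}) \ar[r]^-{t} \ar@{^(->}[d] & B\Pl_\partial(M_{\# g+1}) \ar@{^(->}[d] \\ B\Top^{\theta_\Pl}_\partial(M_{\# g}) \ar[r]_-{t} & B\Top^{\theta_\Pl}_\partial(M_{\# g+1})}\]
where the bottom horizontal stabilization is that of Definition \ref{def.stab} and the top is obtained by choosing once and for all a PL structure on the model $W^\epsilon_{1,2}$ used to form the connected sum. By Proposition \ref{prop.traingth}, each vertical map is (up to weak equivalence) the inclusion of a connected component, say $C_g \hookrightarrow B\Top^{\theta_\Pl}_\partial(M_{\# g})$. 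Since the bottom stabilization glues on a fixed PL piece compatible with the chosen PL structures on either side, it sends $C_g$ into $C_{g+1}$. Theorem \ref{thm.ext}(i)--(ii) then gives an isomorphism on the homology of the bottom row in the ranges $* \leq \frac{g(M)+g-3}{2}$ (constant coefficients) and $* \leq \frac{g(M)+g-4}{3}$ (abelian local coefficients).

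It remains to pass from the total-space homology isomorphism to a component-wise isomorphism, which is the only non-formal step and is standard: homology decomposes as a direct sum over connected components, and in the stable range the bottom horizontal map is an isomorphism on $H_0$, hence induces a bijection on $\pi_0$. Matching components under this bijection, the iso on total homology decomposes into isomorphisms on each component. In particular, the map $H_*(C_g) \to H_*(C_{g+1})$ is an iso in the stated range, proving homological stability for $B\Pl_\partial(M_{\# g})$. For abelian local coefficients one argues identically, since such systems restrict to abelian systems on each component and Theorem \ref{thm.ext}(ii) applies pulled back along $B\Top^{\theta_\Pl}_\partial \to B\Top_\partial$.

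There is no real obstacle; the entire argument is a formal deduction. The most delicate point is the bookkeeping to ensure that the PL and topological stabilizations match up and send the chosen PL component to the chosen PL component, but this is immediate once the PL model of $W^\epsilon_{1,2}$ is fixed.
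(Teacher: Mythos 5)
Your proposal is correct and follows the same route as the paper: the paper's proof is exactly to combine Lemma \ref{lem.triangsph} (sphericality of $\theta_\Pl$), Theorem \ref{thm.ext}, and Proposition \ref{prop.traingth}. Your extra care with the stabilization of the PL component and the passage from total-space homology to component-wise homology just spells out bookkeeping the paper leaves implicit.
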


\begin{proof}Combine Proposition \ref{prop.traingth} and  Lemma \ref{lem.triangsph} with Theorems \ref{thm.main} and \ref{thm.ext}.
\end{proof}

\begin{remark}\label{rem.plrefs} This corollary can also be proven by repeating the proof given in Section \ref{sec.proof} and noting that all the ingredients are true in the PL category. Here are the relevant references:
\begin{description}
	\item[IE] Parametrized isotopy extension, \cite{hudson}.
	\item[IT] Immersion theory, \cite{haefligerimm}.
	\item[MT] Microbundle transversality, \cite{rourkesandersonblock2}.
	\item[WT] Whitney trick, Theorem 5.12 of \cite{rourkesanderson}.
	\item[KT] Kister's theorem, \cite{kuiperlashof1,kuiperlashof2}.
	\item[SH] Stable homeomorphism theorem, trivially true.
\end{description}\end{remark}

\subsection{Application to discrete homeomorphisms} Let $\Diff^\delta(W_{g,1};\partial)$ denote the \emph{discrete} group of diffeomorphisms that are the identity near the boundary. Nariman proved homological stability for $B\Diff^\delta(W_{g,1};\partial)$ when $2n =2$ or $2n \geq 6$ \cite{narimanhigh,nariman2}. His techniques also apply to $M_{\# g}$ for any compact 1-connected smooth manifold $M$ with non-empty boundary $P$. Using the techniques of Section \ref{sec.proof} or Remark \ref{rem.plrefs} one can extend this result to homeomorphisms or PL homeomorphisms. However, in the case of homeomorphisms we can use a remarkable theorem of McDuff to give a quick proof.

Let $\Top(M_{\# g};\partial)$ denote the simplicial subgroup of $\Top_\partial(M_{\# g})$ consisting of homeomorphisms that are the identity near the boundary and $\Top^\delta(M_{\# g};\partial)$ the \emph{discrete} group of $0$-simplices. Inclusion induces a map
\[B\Top^\delta(M_{\# g};\partial) \to B\Top(M_{\# g};\partial)\]
which was proven by McDuff to be a homology equivalence with all local coefficients coming from the target \cite{Mc3}. Furthermore, the inclusion $\Top(W_{g,1};\partial) \to \Top_\partial(M_{\# g})$ is a weak equivalence of simplicial groups. This implies that Theorems \ref{thm.main} and \ref{thm.ext} extend to discrete homeomorphisms.

\begin{corollary}\label{thm.disc}Let $M$ be a compact 1-connected topological manifold of dimension $2n \geq 6$ with non-empty boundary $P$. Then $B\Top^\delta(M_{\# g};\partial)$ exhibits homological stability.\end{corollary}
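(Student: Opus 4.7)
The plan is to reduce directly to Theorems \ref{thm.main} and \ref{thm.ext} by chaining the two comparison maps highlighted in the paragraph preceding the corollary: McDuff's homology equivalence $B\Top^\delta(M_{\# g};\partial) \to B\Top(M_{\# g};\partial)$, and the weak equivalence of simplicial groups $\Top(M_{\# g};\partial) \xrightarrow{\simeq} \Top_\partial(M_{\# g})$ obtained by absorbing the collar germ.

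First I would upgrade the stabilization map to the discrete setting. The operation $\varphi \mapsto \varphi \cup \mr{id}_{W^\epsilon_{1,2}}$ of Definition \ref{def.stab} is a purely set-theoretic prescription on individual homeomorphisms that are the identity near the boundary. Hence it defines a group homomorphism $\Top^\delta(M_{\# g};\partial) \to \Top^\delta(M_{\# g+1};\partial)$ which, by naturality of McDuff's map and of the inclusion into $\Top_\partial$, fits into a commutative ladder
\[\xymatrix{B\Top^\delta(M_{\# g};\partial) \ar[r] \ar[d]^t & B\Top(M_{\# g};\partial) \ar[r]^-\simeq \ar[d]^t & B\Top_\partial(M_{\# g}) \ar[d]^t \\
B\Top^\delta(M_{\# g+1};\partial) \ar[r] & B\Top(M_{\# g+1};\partial) \ar[r]^-\simeq & B\Top_\partial(M_{\# g+1})}\]
in which the middle arrow is McDuff's map and the right arrow is induced by the weak equivalence of simplicial groups.

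Next I would read off homological stability column by column. By Theorem \ref{thm.main}, the rightmost vertical map is an isomorphism on integral homology in the range $* \leq \tfrac{g-3}{2}$ and surjective in the range $* \leq \tfrac{g-1}{2}$. The rightmost horizontal arrows are weak equivalences of simplicial sets, so the middle vertical map inherits the same behaviour; McDuff's theorem then transports this property through the leftmost horizontal arrow, since that arrow is a homology isomorphism with respect to every local coefficient system pulled back from the target — in particular with constant coefficients. For part (ii) of Theorem \ref{thm.ext}, an abelian local system $\cA$ on $B\Top^\delta(M_{\# g};\partial)$ pulls back from $B\Top_\partial(M_{\# g})$ because McDuff's equivalence induces an isomorphism on $H_1$, through which any abelian system factors, so the corresponding range also transfers from right to left.

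The only delicate point is verifying that the stabilization maps really do commute across the three models (discrete group, simplicial topological group, and boundary-fixed version with collar germs). This is immediate once one observes that in each model the stabilization is given by literally the same geometric formula — extension by the identity on a fixed piece $W^\epsilon_{1,2}$ — and so no continuity or smoothing issue arises. With that verified, the corollary follows by combining the ladder above with the quoted results of Theorems \ref{thm.main} and \ref{thm.ext}.
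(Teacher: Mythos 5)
Your proof is correct and takes essentially the same approach as the paper: chain McDuff's homology equivalence with the weak equivalence $\Top(M_{\# g};\partial) \to \Top_\partial(M_{\# g})$ of simplicial groups and transport Theorems \ref{thm.main} and \ref{thm.ext} across the resulting commutative ladder. You fill in details that the paper leaves implicit in the paragraph preceding the corollary, notably the compatibility of the stabilization maps across the three models and the observation that abelian coefficient systems on $B\Top^\delta(M_{\# g};\partial)$ pull back from $B\Top_\partial(M_{\# g})$ because McDuff's theorem gives an isomorphism on $H_1$.
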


\bibliographystyle{amsalpha}
\bibliography{cell}

\end{document}